\numberwithin{equation}{section}
\numberwithin{figure}{section}
  \theoremstyle{plain}
  \newtheorem{thm}{\protect\theoremname}[section]
  \theoremstyle{definition}
  \newtheorem{defn}{\protect\definitionname}[section]
  \theoremstyle{plain}
  \newtheorem{prop}{\protect\propositionname}[section]
  \theoremstyle{plain}
   \newenvironment{proof}[1][\proofname]{\par
     \normalfont\topsep6\p@\@plus6\p@\relax
     \trivlist
     \itemindent\parindent
     \item[\hskip\labelsep
           \scshape
       #1]\ignorespaces
   }{%
     \endtrivlist\@endpefalse
   }
   \providecommand{\proofname}{Proof}
  \theoremstyle{plain}
  \theoremstyle{plain}
\providecommand{\corollaryname}{Corollary}
\providecommand{\definitionname}{Definition}
\providecommand{\lemmaname}{Lemma}
\providecommand{\propositionname}{Proposition}
\providecommand{\theoremname}{Theorem}
\providecommand{\examplename}{Example}
\newtheorem{rem}{Remark}[section]
\def\NN{{\mathbb N}}
\def\RR{{\mathbb R}}
\def\CC{{\mathbb C}}
\def\w{\mathbf{w}}
\def\fn{\mathfrak{n}}
\def\l{\lambda}
\def\O{{\cal O}}
\def\C{{\mathcal C}}
\def\argmin{\mathop{\hbox{\textrm{arg min}}}}
\def\eref#1{(\ref{#1})}
\def\disp{\displaystyle}
\def\donchitre#1#2{\vskip 6.5cm\noindent
\parbox[t]{1in}{\special{eps:#1.eps x=6.5cm y=5.5cm}}
\hbox to 7cm{}\parbox[t]{0.0cm}{\special{eps:#2.eps x=6.5cm y=5.5cm}}}
\def\tn{|\!|\!|}
\global\long\def\lam{\mathcal{W}}
\global\long\def\ee{\varepsilon} \global\long\def\O{\Omega}
\global\long\def\w{\omega} \global\long\def\expn{\gamma}
\global\long\def\expnn{\delta}
\global\long\def\errfn{\phi}
\title{Stable soft extrapolation of entire functions}
\author{Dmitry Batenkov}\address{Department of Mathematics, Massachussetts Institute of Technology, Cambridge, MA 02139}\email{batenkov@mit.edu}
\author{Laurent Demanet}\address{Department of Mathematics, Massachussetts Institute of Technology, Cambridge, MA 02139}\email{laurent@math.mit.edu}
\author{Hrushikesh N. Mhaskar}\address{Institute of Mathematical Sciences, Claremont Graduate University, Claremont, CA 91711. }\email{hrushikesh.mhaskar@cgu.edu}
\begin{document}
\maketitle

\begin{abstract}
 
  Soft extrapolation refers to the problem of recovering a function
  from its samples, multiplied by a fast-decaying window and perturbed
  by an additive noise, over an interval which is potentially larger
  than the essential support of the window.  To achieve stable
  recovery one must use some prior knowledge about the function class,
  and a core theoretical question is to provide bounds on the possible
  amount of extrapolation, depending on the sample perturbation level
  and the function prior.

  In this paper we consider soft extrapolation of entire functions of
  finite order and type (containing the class of bandlimited functions
  as a special case), multiplied by a super-exponentially decaying
  window (such as a Gaussian).  We consider a weighted least-squares
  polynomial approximation with judiciously chosen number of terms and
  a number of samples which scales linearly with the degree of
  approximation. It is shown that this simple procedure provides
  stable recovery with an extrapolation factor which scales
  logarithmically with the perturbation level and is inversely
  proportional to the characteristic lengthscale of the function.
  The pointwise extrapolation error exhibits a H\"{o}lder-type
  continuity with an exponent derived from weighted potential theory,
  which changes from 1 near the available samples, to 0 when the
  extrapolation distance reaches the characteristic smoothness length
  scale of the function. The algorithm is asymptotically minimax, in
  the sense that there is essentially no better algorithm yielding
  meaningfully lower error over the same smoothness class.

  When viewed in the dual domain, soft extrapolation of an entire
  function of order 1 and finite exponential type corresponds to the
  problem of (stable) simultaneous de-convolution and super-resolution
  for objects of small space/time extent. Our results then show that
  the amount of achievable super-resolution is inversely proportional
  to the object size, and therefore can be significant for small
  objects. These results can be considered as a first step towards
  analyzing the much more realistic ``multiband'' model of a sparse
  combination of compactly-supported ``approximate spikes'', which
  appears in applications such as synthetic aperture radar, seismic
  imaging and direction of arrival estimation, and for which only
  limited special cases are well-understood.

\end{abstract}

\section*{Acknowledgements}

The research of DB and  LD is supported in part by AFOSR grant FA9550-17-1-0316, NSF
grant DMS-1255203, and a grant from the MIT-Skolkovo initiative. The research of HNM  is supported in part by ARO Grant W911NF-15-1-0385, and  by the Office of the Director of National Intelligence (ODNI), Intelligence Advanced Research Projects Activity (IARPA), via 2018-18032000002. The
authors would also like to thank Alex Townsend and Matt Li for useful
discussions.

\section{Introduction}

\subsection{Background}
Consider a one-dimensional (in space/time) object $F$, and suppose it
is corrupted by a low-pass convolutional filter $K$ and
additive modeling/noise error $E$, resulting in the
output $G$:
\[
  G\left(x\right)=\int K\left(x-y\right)F\left(y\right)dy+E\left(x\right).
\]
In the Fourier domain, we have
\begin{equation}\label{eq:fourier-tr-def}
\hat{F}(\w):=\int_\RR F(x)\exp(-i\w x)dx
\end{equation}
and consequently
\begin{equation}
  \hat{G}\left(\omega\right)=\hat{K}\left(\omega\right)\hat{F}\left(\omega\right)+\hat{E}\left(\omega\right),\label{eq:freq-samples}
\end{equation}
where $\hat{K}\left(\w\right)$ has small frequency support $\O_*$,
the so-called ``effective bandlimit'' of the system (for example as in
the case of the ideal low-pass filter
$\hat{K}=\chi_{\left[-\O_*,\O_*\right]}$). The problem of
computational super-resolution asks to recover features of
$F\left(x\right)$ from $G\left(x\right)$ below the classical
Rayleigh/Nyquist limit $\frac{\pi}{\O_*}$
\cite{bertero1996iiisuperresolution,lindberg2012mathematical}. As an
ill-posed inverse problem, super-resolution can be regularized using
some a-priori information about $F$. One of the main theoretical
questions of interest is to quantify the resulting stability of
recovery.

Viewed directly in the frequency domain, super-resolution is
equivalent to \emph{out-of-band extrapolation} of
$\hat{F}\left(\omega\right)$ for $\left|\omega\right|>\O_*$ from
samples of $\hat{G}\left(\omega\right)$. Since the Fourier transform
is an analytic function, this leads to the  the problem of stable
\emph{analytic continuation}, widely studied during the last couple of
centuries (\prettyref{subsec:novelty-related-work}).

\subsection{Contributions}

In this paper we consider the question of stable extrapolation of
certain class of entire functions $\hat{F}$. In more detail, we assume
that $\hat{F}$ is analytic in $\CC$ and, for some $\tau>0$,
\begin{equation}\label{eq:exp-type-class}
  \sup_{|z|>0,\;z\in\CC} \left| \hat{F}\left(z\right)\right| \exp\left(-\tau\left|z\right|\right)\leq 1.
\end{equation}

The underlying motivation for choosing such a growth condition is that
the resulting set contains Fourier
transforms of distributions of compact support (see
\prettyref{subsec:discussion} below for more details). Another common
name for such $\hat{F}$ would be ``bandlimited'' (or, in this case, ``time-limited'').

Furthermore, instead of the ``hard'' cutoff $\hat{K}=\chi_{\left[-\O_*,\O_*\right]}$ we
consider ``soft'' windows of super-exponentially decaying shapes,
parametrized by $\alpha\ge2$  (see \prettyref{fig:window})
\begin{equation}\label{eq:window-shape}
  \hat{K}\left(\omega\right)=w_{\alpha}\left(\omega\right):=\exp\left(-\left|\omega\right|^{\alpha}\right).
\end{equation}

Gaussian point-spread functions are considered a fairly reasonable
approximation in microscopy \cite{stallinga_accuracy_2010}, and when
$\alpha$ is increased the shape of $\hat{K}$ approaches the ideal
filter. We therefore  argue that the assumption
\eqref{eq:window-shape} is realistic in applications.

We further assume that the perturbation $\hat{E}\left(\omega\right)$
in \eqref{eq:freq-samples} is a uniformly bounded function
\begin{equation}
  \left|\hat{E}\left(\omega\right)\right|\leq\ee,\;\omega\in\mathbb{R}.\label{eq:uniform-error-bound}
\end{equation}

The ``soft extrapolation'' question, schematically depicted
in \prettyref{fig:extrapolation-schematic} is then to recover $\hat{F}\left(\omega\right)$
in a stable fashion, over an interval $|\w|\leq \O'$ which is potentially larger
than the effective support of the window $|\w|\leq\O_*$, where both
$\O'$ and $\O_*$ may depend on $\ee$ and $\alpha,\tau$.

\begin{figure}
  \begingroup \captionsetup[subfigure]{width=0.65\linewidth}
  \subfloat[Schematic spectrum extrapolation from noisy and
  bandlimited data. The original function is $\hat{F}$ (green,
  dashed). After being multiplied by the window $\hat{K}=w_{\alpha}$
  (brown, dash-dot), it gives the blue, solid curve. Adding a
  corruption $\hat{E}$ of size $\ee$, this gives the data $\hat{G}$
  (shaded grey). The question is to recover $\hat{F}(\omega)$ over
  as wide an interval as possible, with as good an accuracy as
  possible.]{\centering\includegraphics[width=0.7\columnwidth]{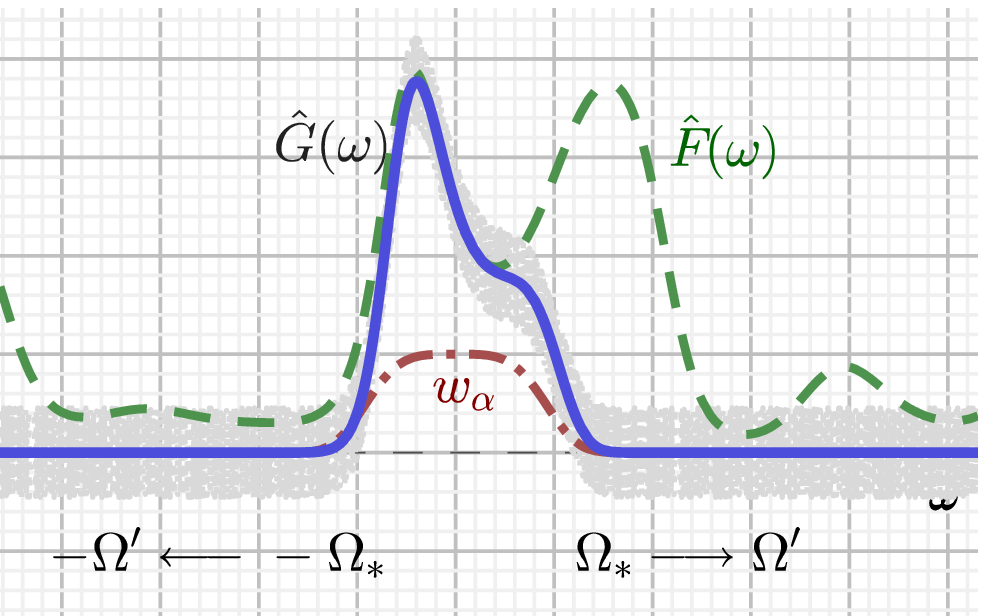}

    \label{fig:extrapolation-schematic}}

  \endgroup

  \begingroup \captionsetup[subfigure]{width=0.4\columnwidth}
  \subfloat[The object prior: the function $F$ has small space/time
  extent.]{\centering\includegraphics[width=0.4\columnwidth]{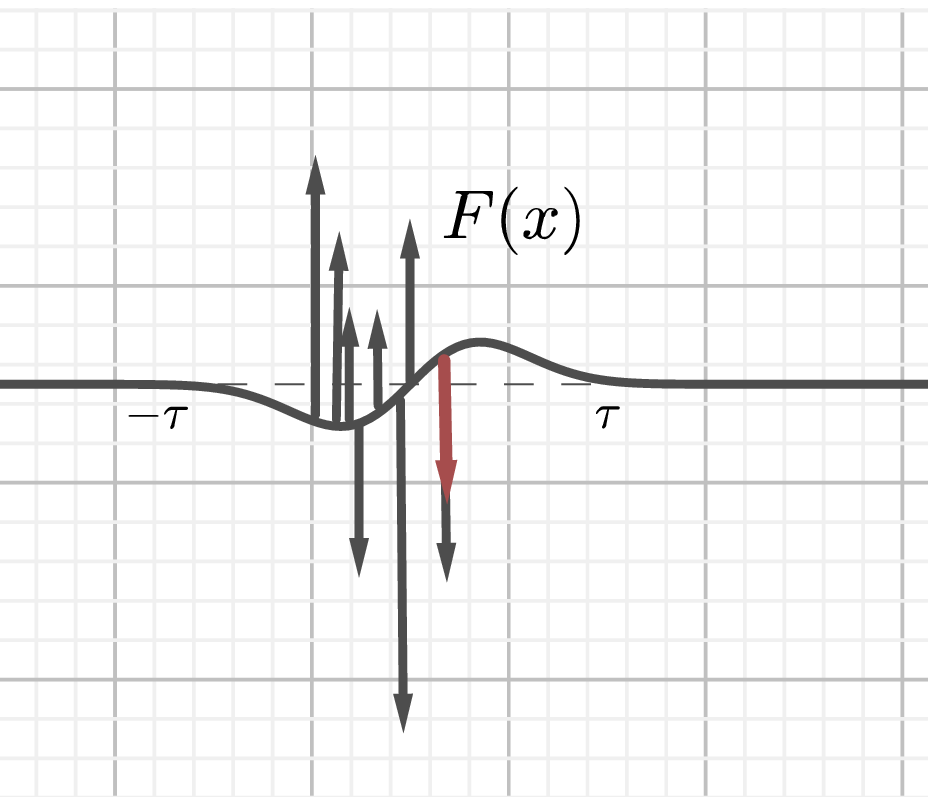}

    \label{fig:prior}} \subfloat[The super-exponentially decaying
  windows $\hat{K_{\alpha}}(\omega)=w_{\alpha}(\omega)$ (left column)
  and their inverse Fourier transforms (time domain filters
  $K_{\alpha}(x)$).]{\centering\includegraphics[width=0.5\columnwidth]{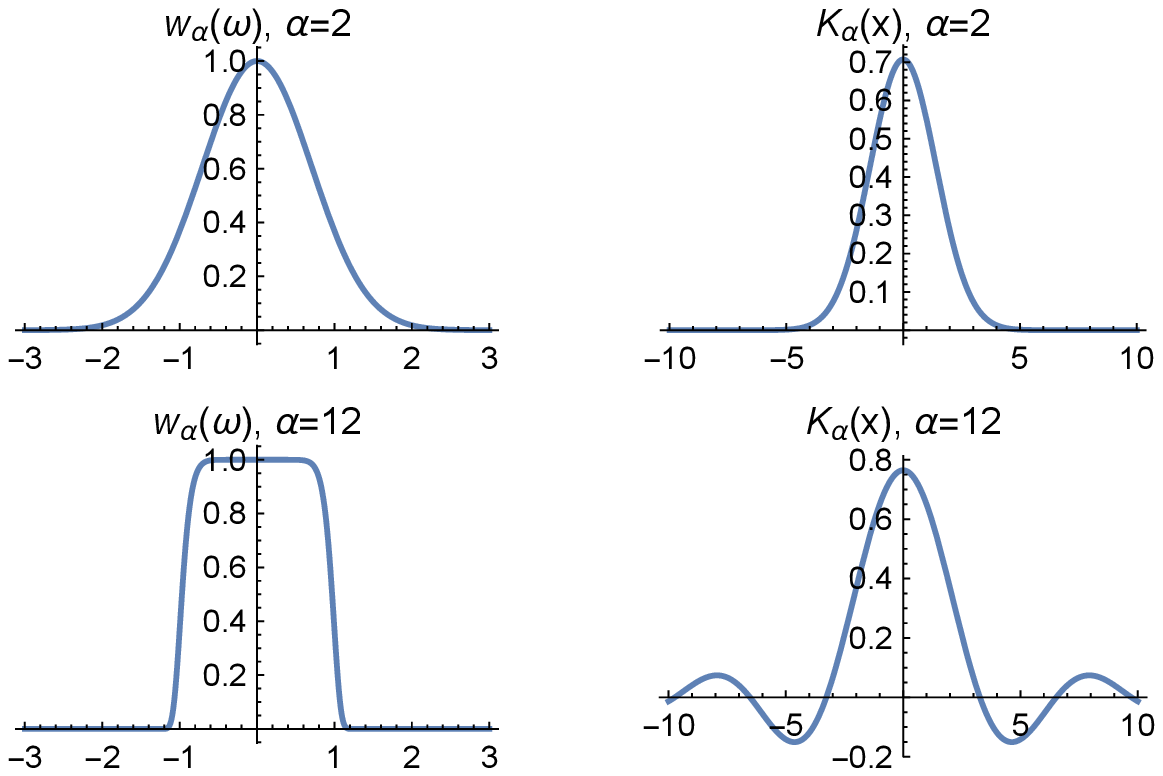}

    \label{fig:window}} \endgroup

  \caption{The schematic representation of the ``soft extrapolation''
    problem.}

\end{figure}

Our first main result, \prettyref{thm:main-thm-exp-type} below (in
particular see also \prettyref{rem:asymptotic-orders}), shows that a
weighted least-squares polynomial approximation with sufficiently
dense samples of the form (\ref{eq:freq-samples}), taken inside the
interval $\left[-\O_*,\O_*\right]$ with $\O_*$ scaling (up to
sub-logarithmic factors\footnote{More complete scaling w.r.t $\ee$, made precise
  in the sequel, is $\O_* \sim \left({1\over\log\log{1\over\ee}}\log{1\over\ee}\right)^{1\over\alpha}$}) like
$\left(\log {1\over\ee}\right)^{1\over\alpha}$ and a judiciously
chosen number of terms achieves an extrapolation factor
$\O'\over \O_*$ which scales (again, up to sub-logarithmic in
$1\over\ee$ factors) like
${1\over\tau}\bigl(\log{1\over\ee}\bigr)^{1-{1\over\alpha}}$, while
the pointwise extrapolation error exhibits a H\"{o}lder-type
continuity, morally of the form $\ee^{\expn(\w)}$ for $|\w|<\O'$. The
exponent $\expn$ varies from $\expn(0)=1$ to $\expn(\O')=0$, has an
explicit form originating from weighted potential theory, and has
itself a minor dependence on $\ee$ that we make explicit in the
sequel.

Our second main result, \prettyref{thm:lower-bound-eps} below, shows
that the above result is minimax in terms of optimal recovery and thus
cannot be meaningfully improved with respect to the asymptotic
behaviour in $\ee\ll 1$.

In fact, we prove more general estimates, which hold for functions
$\hat{F}$ of finite exponential order $\lambda\ge1$ and
type $\tau>0$, satisfying
\begin{equation}
  \label{eq:our-functional-class}
  \sup_{|z|>0}\left|\hat{F}(z)\right|\exp\left(-\tau|z|^\lambda\right)\leq 1
\end{equation}
provided that the window parameter satisfies $\alpha>\lambda$.

\subsection{Novelty and related work}
\label{subsec:novelty-related-work}

Analytic continuation is a very old subject since at least the times
of Weierstraß \cite{henrici_algorithm_1966}. It is the classical
example of an ill-posed inverse problem, and has been considered in
this framework since the early 1960's
\cite{miller_three_1964,miller_least_1970,cannon_problems_1965,miller1973onthe},
\cite{bertero1979onthe,bertero1980resolution,landau1986extrapolating},
and more recently \cite{demanet2016stableextrapolation} and
\cite{trefethen_quantifying_}.  These works establish regularized
linear least squares as a near-optimal computational method, while
also deriving the general form of Hölder-type stability, logarithmic
scaling of the extrapolation range and the connection to potential
theory. Weighted approximation of entire functions has been previously
considered in \cite{canadapap, mhaskar2002onthe}. Below we outline
some of these results in more detail, and relate them to ours.

\begin{enumerate}
\item Miller~\cite{miller_least_1970} and
  Miller\&Viano~\cite{miller1973onthe} considered the problem of
  analytic continuation in the general framework of ill-posed inverse
  problems with a prescribed bound. A general stabiliy estimate of the
  form $\ee^{w(z)}$ was established using Carleman's inequality, and
  regularized least squares (also known as the Tikhonov-Miller
  regularization) was shown to be a near-optimal recovery method. The
  setting in these work is one of ``hard'' extrapolation, where $F$ is
  analytic in an open disc $D$, is being measured on a compact closed
  curve $\Gamma\subset D$, and extended into $D$. It is assumed that
  $F$ is uniformly bounded by some constant on $\partial D$.

  In contrast, we consider analytic continuation of entire functions
  satisfying \eqref{eq:our-functional-class} on unbounded domains from
  samples on intervals of the form $\Gamma=\left[-\O_*,\O_*\right]$
  into $D=\{|z|\leq \O'\}$ where both $\O_*$ and $\O'$ can be
  arbitrarily large when  $\ee\ll 1$. It doesn't
  appear obvious how to extend the methods in
  \cite{miller_least_1970,miller1973onthe} to this setting, without
  using tools of weighted approximation of entire functions which were
  developed later.

\item Tikhonov-Miller theory was applied in the particular context of
  super-resolution in ~\cite{bertero1979onthe}. The inverse problem of
  ``optical image extrapolation'', i.e. restoring a square-integrable
  bandlimited function from its measurments on a bounded interval is
  precisely dual to extrapolating the Fourier transform of a
  space-limited object of finite energy. For the ``hard window''
  $\chi_{\left[-1,1\right]}$ and any bandlimit $c>0$ the authors showed
  that the best possible reconstruction error has a Hölder-type
  scaling $\ee^\alpha$ for some $0<\alpha<1/2$, in the uniform norm.

  In contrast, we consider the ``soft'' formulation, where the
  convolutional kernel $\hat{K}$ does not vanish, but rather becomes
  exponentially small at high frequencies. Our estimates give
  \emph{point-wise} error bounds so that the Hölder exponent is a
  function of the particular point, while also providing the
  functional dependence on the bandlimit $c$ (corresponding to $\tau$
  in our notations).

\item Out-of-band extrapolation for bandlimited functions was also
  considered in \cite{landau1986extrapolating}. This setting is
  closely related to ours as the sampling interval can be
  arbitrary. It was shown that in this case, the optimal extrapolation
  length scales logarithmically with the signal-to-noise ratio,
  however no pointwise estimates were obtained. The sampling widow was
  again the ``hard'' one $\hat{K}=\chi_{\left[-\O,\O\right]}$.

  Our results provide a more complete extrapolation scaling,
  $\tau^{-1}\log{1\over\ee}$ rather than just Landau's
  $\log{1\over\ee}$, while also giving pointwise error estimates. We
  also allow for functions of exponential order $\lambda\ge1$, and
  taking $\alpha\to\infty$ and $\lambda=1$, we in fact recover
  Landau's scalings.

\item A recent work by Demanet\&Townsend
  \cite{demanet2016stableextrapolation} investigates the question of
  stable extrapolation of analytic functions having convergent
  Chebyshev polynomial expansions in a Bernstein ellipse with
  parameter $\rho$ from equispaced data on $\left[-1,1\right]$. Linear
  least squares reconstruction is shown to be asymptotically optimal,
  if the number of samples scales quadratically with $M$, the number
  of terms in the approximation, and also $M\sim \log{1\over\ee}$. The
  resulting extrapolation error scales as $\ee^{\alpha(x)}$ where
  $\alpha$ has a simple dependence on $\rho$. Naturally, the maximal
  extrapolation extent is the boundary of the ellipse,
  i.e. $x_{max}={(\rho+\rho^{-1})\over 2}$, and also $\alpha(1)=1$,
  while $\alpha(x_{max})=0$.

  Our results are close in spirit to
  \cite{demanet2016stableextrapolation}, in particular the scaling
  $\fn\approx \log{1\over\ee}$ for the polynomial approximation order,
  however the specific setting of soft extrapolation is
  different. Furthermore, we do not require equispaced samples (but
  only bound the sample density), and
  also we obtain point-wise extrapolation error bounds on the entire
  complex plane rather than just the real line.

\item In another related recent work \cite{trefethen_quantifying_},
  Trefethen considers stable analytic continuation from an interval
  $E$ to an infinite half-strip (the so-called ``linear'' geometry),
  and shows an exponential loss of significant digits as function of
  distance from $E$ (corresponding to a stability estimate of the form
  $\ee^{\alpha(x)}$ where $x$ is the distance from $E$ and $\alpha$
  decreases exponentially). In contrast, we consider functions
  analytic in the entire plane $\CC$ which is not conformally
  equivalent to the linear geometry.
  
\item Extrapolation is closely related to approximation, and in our
  case of ``soft window'', one is naturally lead to adding a
  corresponding weight function. Indeed, the theory of weighted
  approximation (in particular of entire functions, \cite{canadapap,
    mhaskar1997introduction,mhaskar2002onthe}) plays a crucial role in
  our developments. In addition to extending these works to the
  extrapolation setting, our numerical approximation scheme in this
  paper does not necessarily require construction of quadrature
  formulas and orthogonal polynomials with respect to the weight
  $w_\alpha^2$.
\end{enumerate}
\subsection{Organization of the paper}
In \prettyref{sec:Main-results} we define the basic notation and
formulate the main theorems.  In \prettyref{sec:numsect} we present
some numerical experiments which demonstrate the results in
the case of $\alpha=2$ and $\lambda=1$. In \prettyref{sec:backsect} we
quote results from weighted approximation theory which are
subsequently used in \prettyref{sec:proofs} for the proofs.

\subsection{Discussion}
\label{subsec:discussion}

When viewed in the dual domain (the $x$ variable in
\eqref{eq:fourier-tr-def}), soft extrapolation of an entire function of
order $\lambda=1$ and finite exponential type $\tau>0$ corresponds to
the problem of (stable) simultaneous de-convolution and
super-resolution for objects $F$ of small space/time extent
$\tau$. Indeed, if $F\in L^1$, $\|F\|_1\le 1$, $F$ is even, and
$F(t)=0$ if $|t|>\tau$, then the Fourier transform $\hat{F}$ in
\eqref{eq:fourier-tr-def} is an entire function satisfying
\eqref{eq:exp-type-class} (another common name for $\hat{F}$ would be
``bandlimited'', although in this case it would be more appropriate to
say ``time-limited''). This is an example of theorems of Paley-Wiener
type (\cite[Sect. 7.2]{strichartz_guide_2003}, see also
\cite[p.12]{paley1934fourier}), which hold also for more general
classes such as tempered distributions.

In various applications such as seismic imaging,
communications, radar, and microscopy, a fairly realistic prior on $F$
takes the form of a sparse atomic combination of compactly supported
waveforms, also known as the multiband model in the literature
\cite{zhu_approximating_2017,mishali_theory_2010,izu_time-frequency_2009,venkataramani_optimal_2001}
\begin{equation}\label{eq:multiband}
  F\left(x\right)\sim\sum_{j=1}^{R}F_{j}\left(x-x_{j}\right),
\end{equation}
where each $F_{j}\left(x\right)$ is assumed to have a small space/time
support but is otherwise unknown. While there exist numerous studies
of multiband signals such as the ones quoted above, super-resolution
properties associated with this model are not well-understood, except
in only some special cases (see e.g.
\cite{akinshin_accuracy_2015,batenkov2016stability,batenkov_accuracy_2013,batenkov_geometry_2014,candes2013super,cand`es2014towards,demanet2014therecoverability,donoho1992superresolution,singdet,li_stable_2017,trigwave,loctrigwave,prony_essai_1795,stoica2005spectral}
as a very small sample).

Our results in this paper may be interpreted in this context when
instead of the sparse sum in \eqref{eq:multiband} we can consider the
limit of a single object (possibly a distribution) $F$ of compact
space/time support $\tau>0$ (\prettyref{fig:prior}). In particular, we
obtain the best possible scalings for stability of this inverse
problem, showing that the amount of achievable super-resolution scales
like ${1\over\tau}\log{1\over\ee}$, and therefore can be significant
for small objects with $\tau\ll 1$. Furthermore, a simple algorithm
--- linear least squares fitting --- is asymptotically optimal.

\section{Optimal extrapolation of entire functions}\label{sec:Main-results}

\subsection{Notation}
In the sequel, we fix $\alpha\ge 2$, and omit its mention from
notations except to avoid conflict of notation, and for
emphasis. Also, contrary to the introductory section, in the remainder
of the paper we denote the extrapolation variable by $x$ instead of
$\w$. The functions $\hat{F}$, $\hat{G}$, $\hat{E}$ will correspond to
$f,g$ and $\errfn$.

We shall use the standard definitions and notations of the spaces
$L^p$ for $0<p\leq\infty$ (in this paper with respect to the Lebesgue
measure on $\RR$) and the corresponding norms $\|\cdot\|_p$.

Given a function $f:\CC\to\CC$, and $\tau>0,\;\lambda\ge 1$ we
define
\begin{equation}
  \label{eq:tn-def}
  \tn f\tn_{\tau,\lambda}:=\sup_{|z|>0}|f(z)|\exp(-\tau|z|^\lambda).
\end{equation}

 \begin{defn}\label{def:entire_class_def}
 Given $\tau>0,\;\lambda\ge 1$, the class $B_{\tau,\lambda}$ consists of all entire
 functions $f$, real valued on $\RR$, and satisfying the condition 
 \begin{equation}\label{eq:entire_class_def}
 \tn f \tn_{\tau,\lambda}\le 1.
 \end{equation}
 \end{defn}

 \begin{rem}
   Without loss of generality, in this paper we restrict the
   considerations to the class $B_{\tau,\lambda}$, although it is a
   proper subset of the set of entire functions of exponential order
   $\lambda$ and type $\tau$\footnote{The standard definition of order
     and type is as follows (see e.g. \cite{levin_lectures_1996}): if
     \mbox{$M(f,r):=\sup_{0<|z|\leq r} |f(z)|$}, then
     \mbox{$\lambda:=\limsup_{r\to\infty} {\log\log M \over \log r}$}
     and \mbox{$\tau:=\limsup_{r\to\infty} {\log M\over r^\lambda}$.}}.

   Indeed, for fixed $\lambda, \tau$, any $f$ (real-valued on $\RR$)
   of order $\lambda$ and type $\tau$ and any $\tau'>\tau$ we have
   $\tn f \tn_{\tau',\lambda} < \infty$, and therefore $f=cf_0$ for
   some constant $c$ and $f_0\in B_{\tau',\lambda}$. Furthermore, if
   the original $f$ is complex-valued on $\RR$, one can consider the
   approximation of its real and imaginary parts separately, without
   changing the main asymptotic behaviour of the bounds.
 \end{rem}

 \begin{defn}
   \label{def:asymptotics-eps}

   When comparing small functions of a small quantity $\ee\to 0$, we
   shall write $a\left(\ee\right)\lessapprox b\left(\ee\right)$ when
   there exists $\ee_0$, depending on $\alpha,\tau,\lambda$ only, such that
   for all $c_1 > 0$ (however small), there exists $c_2 > 0$,
   depending on $\alpha,\tau,\lambda,c_1$ such that
   $$
   a(\ee) \leq c_2 \left({1\over\ee}\right)^{c_1/
     {\log\log{1\over\ee}}} b(\ee),\qquad \ee<\ee_0.
   $$
   When both $a\lessapprox b$ and $b\lessapprox a$ we shall sometimes
   write $a\approx b$.

 \end{defn}


  For $y>0$, $\Pi_y$ denotes the class of all algebraic polynomials of degree at
 most $y$. 
 This is the same as the class of polynomials of degree at most $\lfloor y\rfloor$, but the notation is simplified if we simply interpret $\Pi_y$ in this way, rather than writing $\Pi_{\lfloor y\rfloor}$.

\subsection{Extrapolation by least squares fitting}

 Let $\C=\{x_{M}<\cdots <x_{1}\}$ be a set of arbitrary real numbers. We observe data of the form
 \begin{equation}\label{eq:data_def}
 g(x)=w_\alpha(x)f(x)+\errfn(x), \qquad x\in\C,
 \end{equation}
 where $f \in B_{\tau,\lambda}$ and $\errfn$ is a function satisfying 
 \begin{equation}\label{eq:epsbd}
 |\errfn(x)|\le \ee, \qquad x\in\C.
 \end{equation}

 Given $\C$ and $g$ as above, we define the operator $S_n$ computing
 the solution to the following least squares problem of degree $n$:
 \begin{equation}\label{eq:lsquare_def}
 \begin{split}
 S_n(g;\C):&=\argmin_{P\in \Pi_n}\sum_{j=1}^{M-1} \left(w_\alpha^{-1}(x_{j})g(x_{j})-P(x_{j})\right)^2 (x_{j}-x_{j+1})w_\alpha^2(x_{j})\\
 &= \argmin_{P\in \Pi_n}\sum_{j=1}^{M-1} \left(g(x_{j})-w_\alpha(x_{j})P(x_{j})\right)^2 (x_{j}-x_{j+1}).
 \end{split}
 \end{equation}

 When clear from the context, we shall omit $\C$ and write
 $S_n\left(g\right)$.

  Let
  \begin{equation}\label{eq:mrs-1st}
    a_n:=\beta_{\alpha}n^{1\over \alpha},\qquad
    \beta_{\alpha}:=\left\{\frac{2^{\alpha-2}\Gamma(\alpha/2)^2}{\Gamma(\alpha)}\right\}^{1/\alpha}.
  \end{equation}
  
 Our first main result below bounds the error $|f(z)-S_{n}(g)(z)|$ for
 $z\in\CC$ with the particular choice
 $$n=\fn(\ee,\alpha,\tau,\lambda)=\biggl\lfloor{1\over q(\ee,\alpha,\tau,\lambda)}
 \log{1\over\ee} \biggr\rfloor$$ with
 $q(\ee)\approx\log\log{1\over\ee}$, under the assumption that the
 sampling set $\C$ approximately lies in the interval
 $\left[-a_{\fn},a_{\fn}\right]$ and is sufficiently
 dense\footnote{$q(\ee)$ is given by \eqref{eq:q-def}, and the density
   conditions are given in \eqref{eq:sampling_extent_condition} and
   \eqref{eq:density_condition_main}}.

 The error bound heuristically behaves like a $z$-dependent fractional
 power of the perturbation, but in reality it is slightly more
 complicated. In more detail:
 
 \begin{enumerate}
 \item There is a natural rescaling of the $z$ variable by a factor of
   $a_{\fn}$ (which in turn depends on $\ee$), because the sampling
   set $\C$ and the resulting approximating polynomial $S_{\fn}$
   themselves depend on $\fn$. So instead of bounding
   $|f(z)-S_{n}(g)(z)|$ directly, we have a bound of the form 
   $$|f(a_{\fn}z)-S_{\fn}(g)(a_{\fn}z)|\lessapprox \ee^{\gamma(z)}.$$
 \item The exponent $\gamma(z)$ in fact has a weak dependence on
   $\ee$, and it is of the form
   $\gamma(z)=1-\frac{1}{q(\ee)}\delta(z)$ where
   $q(\ee)\approx\log\log{1\over\ee}$ is the same function used in the
   definition of $\fn$ above and $\delta(z)$ is a certain
   logarithmic potential.
 \end{enumerate}

 There are three distinct regions in the complex plane with respect to
 the error asymptotics:
 \begin{enumerate}
 \item The ``approximation region'' $\left[-a_{\fn},a_{\fn}\right]$.
   In the range $[-1,1]$ we have in fact
   $\delta(x)=|\beta_{\alpha}x|^\alpha$, and therefore (disregarding
   the rounding effects) it can be easily shown that
   $$
   \ee^{\gamma(x)}=\ee w_{\alpha}^{-1}(a_{\fn}x).
   $$
   This is the error that would be obtained by conventional
   deconvolution, i.e. division by $w_{\alpha}$.  Note that for
   any $x\in[-1,1]$ we have as $\ee\to
   0$ $$\gamma(x)=1-{\delta(x)\over q(\ee)}\to 1.$$
 \item The ``extrapolation region'', where the error
   $\ee^{\gamma(z)}$ is less than $\exp(\tau |a_{\fn}z|^{\lambda})$,
   i.e. the maximal growth rate for any function in
   $B_{\tau,\lambda}$. It turns out that the maximal extrapolation
   interval can be precisely determined as $|z|\leq r_{\fn}/a_{\fn}$, where
   $$
   r_n:=\biggl(\frac{n}{\tau\lambda}\biggr)^{\frac{1}{\lambda}}.
   $$
   In terms of the Hölder exponent $\gamma$, it turns out that
   $\lim_{\ee\to 0} \frac{\delta(r_{\fn}/a_{\fn})}{q(\ee)}=1$, and
   therefore as $\ee\to 0$ we have $$\gamma(r_{\fn}/a_{\fn})\to 0.$$
 \item The ``forbidden'' region $|z|>r_{\fn}/a_{\fn}$ where
   essentially no information can be obtained about $f$ from the
   samples on $\left[-a_{\fn},a_{\fn}\right]$.
 \end{enumerate}

 In \prettyref{fig:exp-and-bounds-scaled} we show an example for the
 behaviour of both the exponent $\gamma(z)$ and the complete bound
 $\ee^{\gamma(z)}$ for different values of $\ee$ and other parameters fixed.

 \begin{figure}[hbtp]
   \hspace*{-3em} \begingroup
   \captionsetup[subfigure]{width=0.45\columnwidth}

   \subfloat[The exponent $\gamma(z)$. As $\ee$ becomes smaller, the
   values of $\gamma$ in the interval $[0,1)$ approach 1, while at the
   right boundary $z=r_{\fn}/a_{\fn}$ they approach
   0.]{\includegraphics[width=0.5\columnwidth]{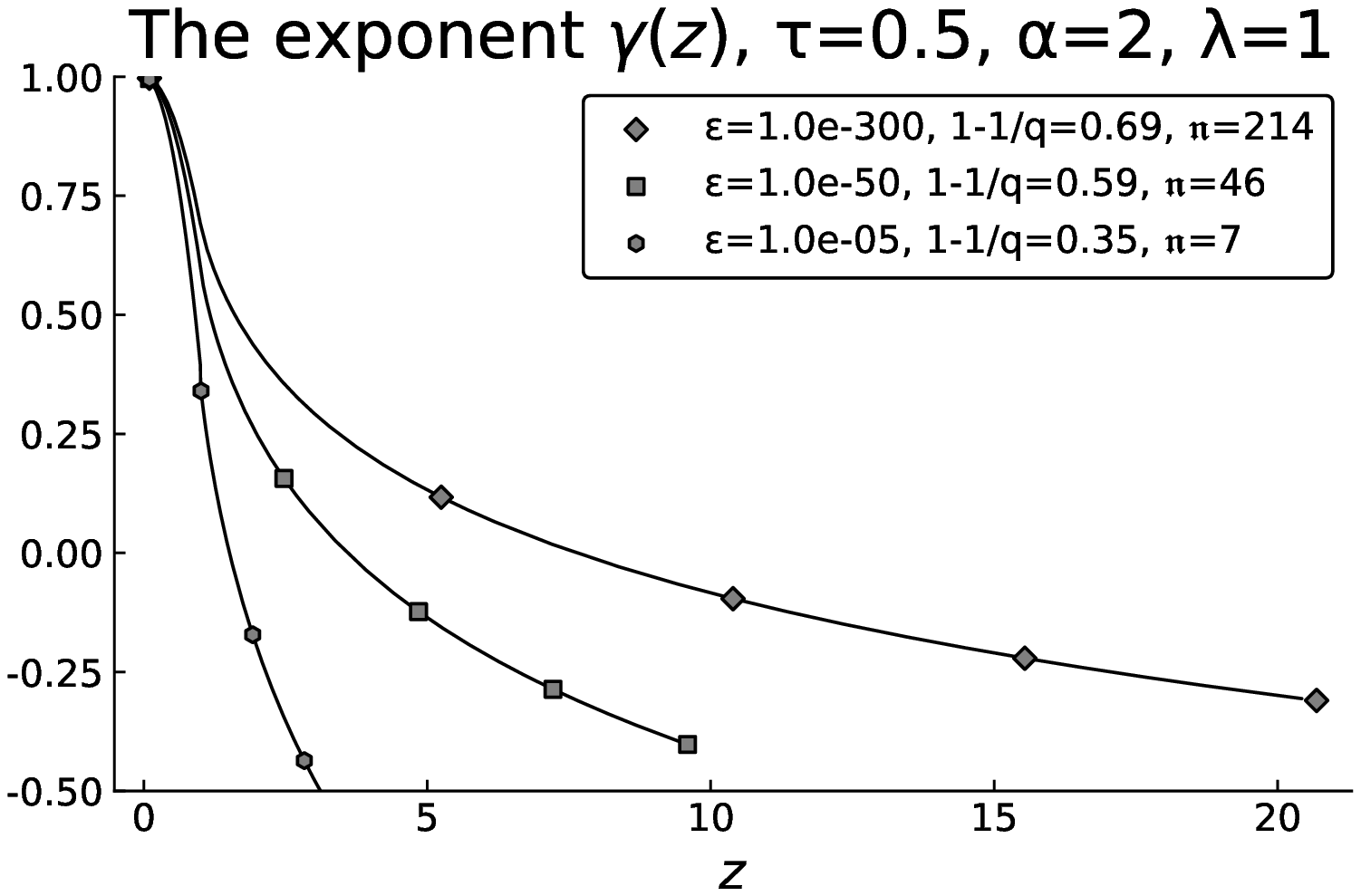}

     \label{fig:exponent-scaled}

   } \subfloat[The complete bound $\ee^{\gamma(z)}$ (solid), the
   quantity $\exp(\tau|a_{\fn} z|^\lambda)$ (dashed) and
   $w_{\alpha}^{-1}(a_{\fn}z)$
   (dotted).]{\includegraphics[width=0.5\columnwidth]{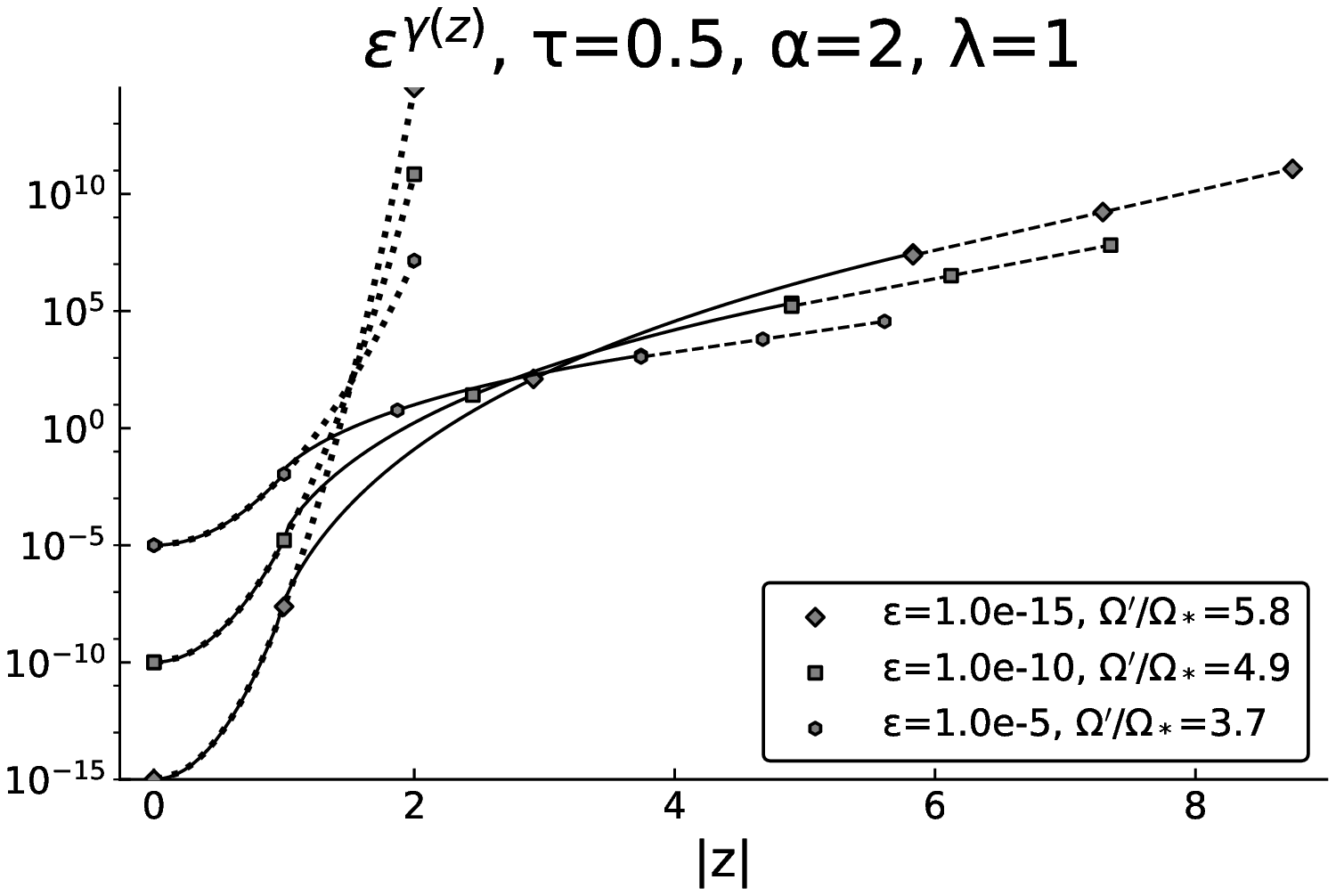}

     \label{fig:bound-scaled}

   } \endgroup
   \caption{The optimal exponent $\gamma(z)$ and the corresponding
     error bound $\ee^{\gamma(z)}$ for several values of $\ee$ and
     $\tau=0.5,\lambda=1,\alpha=2$.}

   \label{fig:exp-and-bounds-scaled}
 \end{figure}

 \begin{rem}\label{rem:asymptotic-orders}
   The length of the sampling window $\O_*$ essentially scales like
   $a_\fn\approx \left({1\over\log\log{1\over\ee}}\log{1\over\ee}\right)^{1\over\alpha}$, while the
   maximal extrapolation range $\O'$ is of the asymptotic order
   $r_\fn\approx \left({1\over
     \tau}{1\over\log\log{1\over\ee}}\log{1\over\ee}\right)^{1\over\lambda}$.  Therefore we obtain
   a genuine extension by a factor of
   $\O'/\O_*\approx
   \left({1\over\tau}\right)^{1\over\lambda}\left({1\over\log\log{1\over\ee}}\log{1\over\ee}\right)^{{1\over\lambda}-{1\over\alpha}}$.
 \end{rem}

 The result reads as follows, and is proved in \prettyref{sec:proofs}.
 
 \begin{thm}\label{thm:main-thm-exp-type}
   Let $f\in B_{\tau,\lambda}$. For $\ee>0$, $\alpha>\lambda$ let
   $\fn=\fn(\ee,\alpha,\tau,\lambda)\approx \log\frac{1}{\ee}$ be as
   defined in \eqref{eq:mstar-def} below.  For any sequence of points
   $\C=\left\{ x_M< \dots<x_1\right\}$ satisfying
   \begin{align}
     \left[-{4\over 3}a_\fn,{4\over 3}a_\fn\right]&\subseteq\left[x_M,x_1\right]\subseteq\left[-2a_\fn,2a_\fn\right]\label{eq:sampling_extent_condition},\\
     |x_j-x_{j+1}|&\leq c_1 \fn^{\frac{1}{\alpha}-1}, \label{eq:density_condition_main}
   \end{align}
   where $c_1=c_1\left(\alpha\right)$ is the explicit consant defined in
   \eqref{eq:choice-of-constants} below, let
   $S_{\fn}\left(g;\C\right)$ be the weighted least squares
   polynomial fit of degree $\fn$ to $f$ using samples of $g$ as in
   \eqref{eq:data_def}, \eqref{eq:epsbd} and \eqref{eq:lsquare_def}.

   Then the (properly rescaled) pointwise extrapolation error satisfies
   \begin{equation}\label{eq:main-extr-estimate-eps}
     \left|f\left(a_\fn z\right)-S_{\fn}(g)\left(a_\fn z\right)\right|\lessapprox
     \begin{cases}
       \ee w_{\alpha}^{-1}\left(a_\fn z\right) & z\in\left[-1,1\right] \\
       \ee^{1-\frac{1}{q\left(\ee\right)} \delta\left(z\right)} & |z| < r_\fn/a_\fn,\;z\in\CC\\
       \exp\left(\tau |a_\fn z|^\lambda\right) & |z|>r_\fn/a_\fn,\;z\in\CC
     \end{cases},
   \end{equation}
   where $q\left(\ee\right)$ is given by \eqref{eq:q-def} and satisfies
   $q(\ee)\approx\log\log{1\over\ee}$, while the function
   $\delta\left(z\right)$ is defined in \eqref{eq:expn2-def} below,
   such that
   \begin{numcases}{\lim_{\ee\to 0} \frac{\delta(z)}{q\left(\ee\right)} = }
     0 & for any fixed $|z|$\label{eq:delta-at-one}\\
     1 & for any $z=z_{\ee}$ with $|z_\ee|=r_\fn/a_\fn$.\label{eq:right-boundary-expn-asympt}
   \end{numcases}

   Furthermore, the relation in \eqref{eq:main-extr-estimate-eps}
   holds uniformly on compact subsets in $z$.

 \end{thm}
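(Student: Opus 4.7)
The plan is to use a standard bias--variance decomposition of the error, but with all ingredients drawn from the weighted approximation theory of Freud weights. Writing $P^*\in\Pi_\fn$ for a near-best weighted polynomial approximant to $f$ on $\RR$ with respect to $w_\alpha$, I split
\[
f(a_\fn z)-S_\fn(g)(a_\fn z)=\bigl[f(a_\fn z)-P^*(a_\fn z)\bigr]+\bigl[P^*(a_\fn z)-S_\fn(g)(a_\fn z)\bigr],
\]
and handle each piece separately.

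For the \emph{bias} $f-P^*$, I would invoke the weighted polynomial approximation bound for entire functions of finite order and type quoted in \prettyref{sec:backsect}: since $\alpha>\lambda$ and $f\in B_{\tau,\lambda}$, the weighted error $\|w_\alpha(f-P^*)\|_\infty$ decays geometrically in $\fn$ at a rate governed by $\tau/\fn^{1-\lambda/\alpha}$. The balance of this decay against $\ee$ produces the scaling $\fn\approx\log(1/\ee)$ and fixes the form of $q(\ee)$ in \eqref{eq:q-def}.

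For the \emph{variance} $Q:=P^*-S_\fn(g)\in\Pi_\fn$, I would proceed in two steps. On the sampling set $\C$ one has $|w_\alpha(x)(f(x)-P^*(x))|+|\errfn(x)|\lessapprox\ee$, and the extent/density assumptions \eqref{eq:sampling_extent_condition}--\eqref{eq:density_condition_main} supply a Marcinkiewicz--Zygmund-type equivalence between the discrete weighted $\ell^2$ norm minimised by $S_\fn$ and the continuous weighted $L^2$ norm on $[-2a_\fn,2a_\fn]$. Coupling this with the $L^2\!\to\!L^\infty$ weighted Nikolskii inequality for $\Pi_\fn$ (again from \prettyref{sec:backsect}) gives
\[
\|w_\alpha Q\|_{L^\infty(\RR)}\lessapprox\ee.
\]
This already handles the first line of \eqref{eq:main-extr-estimate-eps} upon dividing by $w_\alpha(a_\fn z)$ for $z\in[-1,1]$. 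The decisive step is then to extend the bound to the complex plane via a Bernstein--Walsh inequality for weighted polynomials associated to $w_\alpha^2$: for $Q\in\Pi_\fn$,
\[
|Q(a_\fn z)|\le \|w_\alpha Q\|_{L^\infty(\RR)}\,\exp\bigl(\fn\,U(z)\bigr),
\]
where $U(z)$ is built from the Mhaskar--Rakhmanov--Saff equilibrium potential associated with the Freud weight $w_\alpha^2$ on the rescaled interval $[-1,1]$. The exponent $\delta(z)$ of \eqref{eq:expn2-def} is precisely the normalisation of $\fn\,U(z)$ by $\log(1/\ee)$, and with $q(\ee)\approx\log\log(1/\ee)$ one obtains $\ee\cdot\exp(\fn U(z))=\ee^{1-\delta(z)/q(\ee)}$. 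The third (``forbidden'') case of \eqref{eq:main-extr-estimate-eps} is merely the a~priori growth bound \eqref{eq:entire_class_def} on $f$ itself: beyond $|z|=r_\fn/a_\fn$ the extrapolated polynomial can grow no faster than the envelope of $B_{\tau,\lambda}$.

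The limiting behaviour \eqref{eq:delta-at-one}--\eqref{eq:right-boundary-expn-asympt} of $\delta/q$ should follow from the explicit form of the equilibrium potential for Freud weights: $U$ vanishes on the support $[-1,1]$ and grows outward to match exactly the admissible growth rate $\tau|a_\fn z|^\lambda/\fn$ at the critical radius $r_\fn/a_\fn$. I expect the main obstacle to be the delicate bookkeeping around the choice of $q(\ee)$: it must simultaneously make $\fn=\lfloor\log(1/\ee)/q(\ee)\rfloor$ large enough for the bias to be $\lessapprox\ee$, small enough that $\delta(z)/q(\ee)\to 0$ on every fixed compact while reaching $1$ at the moving boundary, and loose enough to absorb all sub-logarithmic slack in the Marcinkiewicz--Zygmund, Nikolskii, and Bernstein--Walsh constants --- all while both the weight $w_\alpha(a_\fn\cdot)$ and the sampling interval themselves move with $\ee$ through $\fn$.
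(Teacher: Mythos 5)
Your overall architecture (Marcinkiewicz--Zygmund stability of the discrete least squares, a Christoffel/Nikolskii bound to get $\|w_\alpha Q\|_\infty\lessapprox\ee$ for the polynomial part, a Bernstein--Walsh extension to $\CC$, and the balancing of the bias against $\ee$ to fix $\fn$ and $q(\ee)$) is the same as the paper's, and the variance half of your argument is essentially \prettyref{theo:mztheo}, \prettyref{prop:ls_square_prop} and \prettyref{theo:ls_square_theo}. The genuine gap is in the bias term. You split off $f-P^*$ with $P^*$ a near-best weighted approximant, which only gives you control of $\|w_\alpha(f-P^*)\|_{L^\infty(\RR)}$. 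That suffices on $[-a_\fn,a_\fn]$ (first line of \eqref{eq:main-extr-estimate-eps}), but in the extrapolation region $1<|z|<r_\fn/a_\fn$, $z\in\CC$, you must also bound $|f(a_\fn z)-P^*(a_\fn z)|$ off the real axis, and a generic near-best approximant carries no such pointwise complex information: separately, $|P^*(a_\fn z)|$ is only controlled by Bernstein--Walsh at the level $\exp(\fn U(z)-\fn F_\alpha)\approx\ee^{-\delta(z)/q}$ and $|f(a_\fn z)|$ only by $\exp(\tau|a_\fn z|^\lambda)$, both far too large; the needed cancellation between $f$ and its approximant is exactly what your decomposition discards. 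The paper avoids this by replacing $P^*$ with the Lagrange interpolant $L_\fn(f)$ at the zeros of the weighted Chebyshev polynomial $\mathcal{T}_\fn$ (\prettyref{theo:compact_entire_theo}): Hermite's contour-integral error formula \eqref{pf3eqn2} then yields the pointwise complex bound $|f(z)-L_\fn(f)(z)|\precapprox|\mathcal{T}_\fn(z)|\,b_\fn^{-\fn}\exp(\tau b_\fn^\lambda)$ valid throughout $|z|\le b_\fn$, which is what actually propagates the $\ee^{1-\delta(z)/q}$ rate into the extrapolation region. Without this (or some substitute such as a telescoping series of polynomial differences, each treated by Bernstein--Walsh at its own degree), the second case of \eqref{eq:main-extr-estimate-eps} does not follow from your decomposition.

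A secondary, smaller gap: in the forbidden region $|z|>r_\fn/a_\fn$ you assert that the extrapolant ``can grow no faster than the envelope of $B_{\tau,\lambda}$,'' but this is not automatic --- a degree-$\fn$ polynomial a priori grows like $|z|^{\fn}$, and one must verify that $\exp(\fn U(z/a_\fn)-\fn F_\alpha)\precapprox\exp(\tau|z|^\lambda)$ for $|z|\ge r_\fn$. The paper does this explicitly in the proof of \prettyref{theo:maintheo} via the elementary estimate $|z|^{\fn}\exp(-\tau|z|^\lambda)\le(\fn/\tau\lambda)^{\fn/\lambda}\exp(-\fn/\lambda)$ together with the definitions of $\rho$, $a_\fn$ and $F_\alpha$; that computation (and the matching bound \eqref{eq:inter-error-large} for the interpolation error when $|z|>b_\fn$) needs to appear in a complete proof.
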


 \begin{rem}
   \label{rem:equispaced-linear}
   If the points are chosen to be equispaced, then it suffices to have
   at most linear (in the approximation degree $\fn$) growth of the
   size of the sampling set $\C$, as indeed it is sufficient to take
   $$|\C_{\textrm{equi}}|= {4\over
     3c_1}a_\fn \fn^{1-{1\over\alpha}}\leq c \fn.$$
 \end{rem}

\subsection{Optimality}
Next, we will show that the results above cannot be meaningfully
improved. To do so, we first review some facts from the theory of
optimal recovery based on \cite{micchelli1985lectures,
  micchelli1977survey}, as they relate to our problem.  We consider
the set $B_{\tau,\lambda}$ as a subset\footnote{Since the estimate
  \eref{eq:uniformity} is uniform for all functions in
  $B_{\tau,\lambda}$, $B_{\tau,\lambda}$ is a compact subset of $X$.}
of the space $X$ of all continuous functions $f :\RR\to\RR$ such that
$w_\alpha f\in L^\infty$.

Let $Y$ be a normed linear space with norm given by $\|\cdot\|_Y$, and
$I:X\to Y$, $R :Y\to X$ be functions. The function $I$ is called
\emph{information operator}, and $R$ is called a \emph{recovery
  operator}. In \cite{micchelli1985lectures, micchelli1977survey}, $I$
is required to be a linear operator, but we do not need this
restriction here. In its place, we require that
 \begin{equation}\label{eq:info_fn_cond}
 \|I(f)\|_Y \le \|w_\alpha f\|_\infty, \qquad f\in X.
 \end{equation}
 We give some examples of $R$ and $I$.
 \begin{enumerate}
 \item A trivial example is $Y=X$ and $I(f)=w_\alpha f$, $f\in X$. In
   this case the recovery operator is of interest only for an extension
   of $f$ to $\CC$.

 \item In this paper, we are considering the mapping $I :X\to\RR^M$
   given by
   \[
     I(f)=(f(x_{M})w_\alpha(x_{M}), \cdots, f(x_{1})w_\alpha(x_{1})),
   \] and the recovery operator given by $S_n$.
 \item If $w_\alpha f\in L^1+L^\infty$, we define its
   Fourier-orthogonal coefficients (see \eqref{eq:orthogonality} below)
   by
   \begin{equation}\label{eq:fourcoeffdef}
     \hat{f}(k)=\int_\RR f(t)p_k(t)w_\alpha^2(t)dt, \qquad k=0,1,\cdots.
   \end{equation}
   Let $Y$ be the space of all sequences $\{d_k\}$ for which
   $$
   \|\{d_k\}\|_Y=c_2^{-1}\sup_{k\ge 0}(k+1)^{1/(2\alpha)}|d_k| <\infty,
   $$
   where $c_2$ is defined in \eref{eq:ortho_poly_bd}.  The information
   operator $I(f)=\{\hat{f}(k)\}$ maps $X$ into $Y$, and satisfies
   \eref{eq:info_fn_cond}. An example of a recovery operator is the
   expansion $\sum_{k=0}^\infty d_kp_k(z)$ which converges when
   $f\in B_{\tau,\lambda}$ and $d_k=\hat{f}(k)$.  This shows that one
   can weaken the condition \eref{eq:info_fn_cond}, allowing a
   constant factor on the right hand side of the inequality, by
   renormalizing $Y$.
 \end{enumerate}

 For any information operator $I$ and recovery operator $R$,
 $z\in \CC$, and $\ee >0$, we are interested in the worst case
 recovery error when the information is perturbed by $\ee$:
 \begin{equation}\label{eq:worst_case_error}
 E(\tau,\lambda, \ee, z; R, I):=\sup_{f\in B_{\tau,\lambda},\;\|y\|_Y \le \ee}|f(a_\fn z)-R(I(f)+y)(a_\fn z)|,
\end{equation}
where again $\fn=\fn\left(\ee,\alpha,\tau,\lambda\right)$ as given by
\eqref{eq:mstar-def}. The best worst case (minimax) error is defined by
 \begin{equation}\label{eq:noisy_optimal_recovery}
 \mathcal{E}(\tau,\lambda, \ee,z):=\inf_{R,I}E(\tau,\lambda, \ee; R, I),
 \end{equation}
 where it is understood that the infimum is over all $R$ and $I$ with
 all normed linear spaces $Y$, subject to
 \eref{eq:info_fn_cond}. Informally,
 $\mathcal{E}(\tau,\lambda, \ee,z)$ the best accuracy one can expect
 in reconstructing $f(z)$ for every $f\in B_{\tau,\lambda}$ from any
 kind of information and recovery based only on the values of $f$ on
 $\RR$. It is closely related to the notion of ``best possible
 stablity estimate'' of K.Miller \cite{miller_least_1970},
 cf. \prettyref{subsec:novelty-related-work}.

 With these notations, the bound \eqref{eq:main-extr-estimate-eps} in
 \prettyref{thm:main-thm-exp-type} means that (with
 $q=q\left(\ee\right)$ as before)
 \[
 \mathcal{E}(\tau,\lambda, \ee,z) \lessapprox \ee^{1-{1\over q}\delta\left(z\right)},\quad |z|\leq r_\fn/a_\fn.
 \] 

 The following theorem shows that in this sense of optimal recovery, this result is the best possible.

 \begin{thm}\label{thm:lower-bound-eps}
   There exists a function $\xi(\ee)$ satisfying
   $\xi(\ee)\lessapprox \ee$, such that for any $z\in\CC$, $\ee>0$
   \begin{equation}\label{eq:optimal-lower-bound}
     \mathcal{E}\left(\tau,\lambda,\xi\left(\ee\right),z\right)\gtrapprox \ee^{1-{1\over q}\delta(z)},
   \end{equation}
   the relation holding uniformly for compact sets in the complex
   plane (w.r.t $z$).

   In particular, for any small enough $\ee\ll1$ there exists a ``dark
   object'' $f_{\ee}$ such that
   \begin{enumerate}
   \item for $x\in\left[-\O_*,\O_*\right]$ it has the same
     magnitude as the perturbation level:
     $$\left|f_{\ee}\left( x\right)\right|\lessapprox \ee
     w_{\alpha}^{-1}\left( x\right);$$
   \item outside the sampling window, it has the same
     magnitude as the extrapolation
     error:
     $$\left|f_{\ee}\left(a_\fn z\right)\right|\gtrapprox\ee^{1-{1\over q}\delta\left(z\right)},\qquad |z|\le r_\fn/a_\fn.$$
   \end{enumerate}
 \end{thm}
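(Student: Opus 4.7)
The plan is to apply the standard symmetry reduction from optimal recovery theory: to lower-bound $\mathcal{E}(\tau,\lambda,\xi(\ee),z)$ it suffices to produce a single ``dark object'' $f_\ee\in B_{\tau,\lambda}$ that is small in the weighted sup-norm on $\RR$ but large at the point $a_\fn z$. Indeed, for any admissible information operator $I$ satisfying \eqref{eq:info_fn_cond} and any recovery $R$, both $f_\ee$ and $-f_\ee$ lie in $B_{\tau,\lambda}$ with $\|I(\pm f_\ee)\|_Y \le \xi(\ee)$ whenever $\|w_\alpha f_\ee\|_\infty \le \xi(\ee)$, so choosing the admissible noise vectors $y_\pm := \mp I(\pm f_\ee)$ collapses both inputs to the common point $0$ and forces $R$ to return a single value $v = R(0)(a_\fn z)$ in both cases. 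The triangle inequality then bounds the worst case error below by $|f_\ee(a_\fn z)|$, and taking the infimum over $R, I$ gives the reduction
\[
\mathcal{E}(\tau,\lambda,\xi(\ee),z) \;\ge\; \sup\bigl\{\,|f(a_\fn z)| : f\in B_{\tau,\lambda},\ \|w_\alpha f\|_\infty \le \xi(\ee)\,\bigr\}.
\]

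The second step is to construct this extremizer as a polynomial of degree $\fn$ using the weighted-approximation apparatus collected in Section~\ref{sec:backsect}. For each target point $z$ let $T_\fn^{(z)}\in\Pi_\fn$ solve the weighted Chebyshev problem of maximizing $|P(a_\fn z)|$ subject to $\|P w_\alpha\|_\infty \le 1$, and write $M_\fn(z):=|T_\fn^{(z)}(a_\fn z)|$. The central analytic input, to be quoted from weighted potential theory, is the sharp asymptotic $\log M_\fn(z) = \fn\,\delta(z) + o(\fn)$, uniformly on compact subsets of $\{|z|\le r_\fn/a_\fn\}$, with $\delta$ the logarithmic potential from \eqref{eq:expn2-def}. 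Setting $f_\ee := \xi(\ee)\,T_\fn^{(z)}$ with $\xi(\ee):=\eta(\ee)\,\ee$, where $\eta(\ee)$ is a sub-polylog decay used to absorb a $\tn\cdot\tn_{\tau,\lambda}$-normalization factor, yields $\|w_\alpha f_\ee\|_\infty \le \xi(\ee)$ and $|f_\ee(a_\fn z)|=\xi(\ee)\,M_\fn(z)\approx\ee^{\,1-\delta(z)/q}$ upon inserting $\fn\approx\log(1/\ee)/q$. Combined with the reduction above this is exactly \eqref{eq:optimal-lower-bound}.

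What remains is to verify $f_\ee\in B_{\tau,\lambda}$, i.e.\ $\tn f_\ee\tn_{\tau,\lambda}\le 1$. Since $T_\fn^{(z)}$ is a polynomial of degree $\fn\approx \log(1/\ee)/q$, a weighted Bernstein/Markov-type estimate from Section~\ref{sec:backsect} together with $\|T_\fn^{(z)} w_\alpha\|_\infty\le 1$ controls $|T_\fn^{(z)}(w)|\exp(-\tau|w|^\lambda)$ on all of $\CC$ and shows that $\tn T_\fn^{(z)}\tn_{\tau,\lambda}$ grows at most sub-exponentially in $\fn$, which is absorbed by $\eta(\ee)$ at the cost of a $\lessapprox$ slackening. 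The two bulleted properties of $f_\ee$ follow at once: $|f_\ee(x)|\le\xi(\ee)\,w_\alpha^{-1}(x)\lessapprox\ee\,w_\alpha^{-1}(x)$ on the sampling window, and the growth estimate of the previous paragraph outside it. The principal obstacle is the sharp weighted-Chebyshev asymptotic $\log M_\fn(z)=\fn\,\delta(z)+o(\fn)$: this identifies the extremal polynomial growth rate with the very same potential $\delta$ driving the upper bound in Theorem~\ref{thm:main-thm-exp-type}, now invoked in the converse direction to realize rather than majorize the extremum, and must hold uniformly on compacta with an $o(\fn)$ error that can be absorbed by $\lessapprox$ --- precisely the reason $\xi(\ee)$ is only required to satisfy $\xi(\ee)\lessapprox\ee$ rather than $\xi(\ee)=\ee$.
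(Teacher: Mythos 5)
Your overall architecture matches the paper's: the two-point symmetry reduction (feeding $\pm f_\ee$ with noise $\mp I(\pm f_\ee)$ to collapse both inputs to $R(0)$), followed by an extremal weighted polynomial of degree $\fn$ whose growth at $a_\fn z$ is governed by the potential $\delta(z)=U(z)-F_\alpha$. The paper realizes this with the single weighted Chebyshev polynomial, setting $P_\fn^*=M_\fn^{-1}\rho^\fn \fn^{-\mu\fn}\mathcal{T}_\fn/\|\mathcal{T}_\fn w_\alpha\|_\infty$ and invoking \eqref{eq:cheb_ext_limit} for the lower bound on $|P_\fn^*(a_\fn z)|$. However, your verification that the dark object lies in $B_{\tau,\lambda}$ contains a genuine quantitative error.

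You claim that $\|T_\fn^{(z)} w_\alpha\|_\infty\le 1$ forces $\tn T_\fn^{(z)}\tn_{\tau,\lambda}$ to grow \emph{at most sub-exponentially} in $\fn$, so that a sub-polylog factor $\eta(\ee)$ absorbs it. This is false. The correct conversion between the weighted sup-norm on $\RR$ and the entire-function norm $\tn\cdot\tn_{\tau,\lambda}$ is \eqref{pf5eqn1}: $\tn P\tn_{\tau,\lambda}\precapprox n^{\mu n}\rho^{-n}\|w_\alpha P\|_\infty$ for $P\in\Pi_n$, and this rate is sharp up to sub-exponential factors (a degree-$n$ polynomial normalized on the Mhaskar--Rakhmanov--Saff interval attains values of order $n^{\mu n}\rho^{-n}$ at $|w|\asymp r_n$ where $|w|^n e^{-\tau|w|^\lambda}$ peaks). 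The factor $n^{\mu n}\rho^{-n}$ is super-exponential in $n$, and at $n=\fn$ it equals $\approx 1/\ee$ precisely by the defining property \eqref{eq:optndef} of $\fn$. So the full normalization by $\approx\ee$ is \emph{required} for membership in $B_{\tau,\lambda}$ — it is not merely the noise-level constraint $\|w_\alpha f_\ee\|_\infty\le\xi(\ee)$, and it cannot be relegated to a sub-polylog correction $\eta(\ee)$. Your final arithmetic ($|f_\ee(a_\fn z)|\approx\ee\cdot e^{\fn\delta(z)}=\ee^{1-\delta(z)/q}$) comes out right only because the two normalizations coincide at $n=\fn$; the stated justification would not survive scrutiny, and the missing ingredient is exactly the estimate \eqref{pf5eqn1} (from \cite[Lemma~3.3]{canadapap}), not a Bernstein/Markov inequality. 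Two further points: your extremal polynomial $T_\fn^{(z)}$ depends on the target point $z$, so it proves the minimax bound \eqref{eq:optimal-lower-bound} pointwise but does not produce the \emph{single} dark object $f_\ee$ asserted in the second half of the theorem — for that you need one polynomial (the Chebyshev one) whose lower asymptotic \eqref{eq:cheb_ext_limit} holds simultaneously on compacta; and that asymptotic is only available on compact subsets of $\CC\setminus[-1,1]$, so your claimed uniformity on all of $\{|z|\le r_\fn/a_\fn\}$ overstates what the potential-theoretic input delivers.
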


\section{A numerical illustration:  functions of order $\lambda=1$  with Hermite
  polynomials}\label{sec:numsect}

In this section we
specialize our preceding results to the case $\alpha=2$, $\lambda=1$,
with $f$ a function of finite exponential type $\tau>0$. We then run a
simple computational experiment and compare the results with the
theoretical predictions, showing good agreement between the two in
practice.

For technical convenience, we have chosen to work with an
off-the-shelf implementation of Hermite polynomials, which are
orthogonal with respect to the weight function
$u_2(x)=\exp\left(-{x^2\over 2}\right)$. So instead of
\eqref{eq:data_def} we assume that $f$ is blurred by
$u_2$. Since our weights $w_{\alpha}$ are not of this form, we perform
a trivial change of variable $t=x/\sqrt{2}$ and apply our results to
the function $h(t):=f(t\sqrt{2})$, which is consequently of
exponential type $\tau'=\tau\sqrt{2}$.

In this case we have $\beta_2=1$, $a_n=\sqrt{n}$, $F_2=\log(1/2)-1/2$,
$v_2(t)=(1/\pi)\sqrt{1-t^2}$ and also\footnote{According to \cite[(2.9)]{mhaskar1984extremal}, the relationship between $\delta(z)$ and the function
  $G(2;z)$ is
  $$G(2;z)=\exp\left\{\delta(z)-\log\left| z+\sqrt{z^2-1} \right| -
    |z|^2\right\}.$$ The explicit formula for $G(2;z)$ is then given
  by \cite[(2.19),(2.20)]{mhaskar1984extremal}.}
 \begin{equation}\label{eq:hermite_exponent}
   \delta(z)= U(z)-F_2=\log\left|z+\sqrt{z^2-1}\right| +\Re\left(z^2-z\sqrt{z^2-1}\right),
 \end{equation}
 where the branch of $\sqrt{z^2-1}$ is chosen so that
 $\sqrt{z^2-1}/z\to 1$ as $z\to\infty$.

Further, according to our notations, we also have
$\rho=\tau'\sqrt{e}/2$, $\mu=1/2$ and 
\begin{align*}
   q\left(\ee\right) &= {1\over 2}\lam \biggl({4\over \tau^2 e}\log{1\over\ee}\biggr),\\
   \fn &= \left\lfloor {1\over q}\log{1\over\ee}\right\rfloor.
\end{align*}



To implement the least squares operator $S_\fn$, we have chosen to
work in the basis of Hermite orthogonal polynomials $H_n$ which
satisfy $\int H_k(x) H_j(x)
\exp\left(-x^2\right)dx=\delta_{k,j}$. Following
\prettyref{rem:equispaced-linear} we pick $2\fn$ equispaced samples in
$\left[-\sqrt{2\fn},\sqrt{2\fn}\right]$ (i.e. the oversampling factor
is $2$).

For running the experiments below, we have chosen the following model
function $f_{\tau}$ to extrapolate:
\begin{equation}\label{eq:fun-example}
  f_{\tau}(x):={1\over 14}\bigl(5+\cosh\left(\tau x - 2\right) + \sinh\left(\tau x\right)\bigr).
\end{equation}
A simple computation shows that $\tn f_{\tau} \tn_{\tau,1}\leq 1$,
and therefore  $f_{\tau}\in B_{\tau,1}$.

\prettyref{thm:main-thm-exp-type} implies that the error satisfies, in
the unscaled $z$ variable,
\begin{equation}
  \label{eq:num-bounds-explicit}
  |f_{\tau}(z)-S_\fn(g)(z)|\lessapprox E_{\tau,\ee}(f;z):=
  \begin{cases}
    \ee \exp\left(z^2/2\right) & z\in \left[-\sqrt{2\fn},\sqrt{2\fn}\right]\\
    \ee^{\expn'_{\ee}(z)},\quad \expn'_{\ee}(z)=1-{1\over
      q}\delta\left({z\over\sqrt{2\fn}}\right) &
    |z|\in[\sqrt{2\fn},{\fn\over\tau}]\\
    \exp\left(\tau|z|\right) & |z|>{\fn\over\tau}.
  \end{cases}
\end{equation}

Instead of the construction used in the proof of
\prettyref{thm:lower-bound-eps} (i.e. the function $P_\fn^*$ given in
\eqref{pf5eqn2}), we shall use the following function as our ``dark
object'' (unrelated to $f_{\tau}$):
 \begin{equation}\label{eq:new-dark-object}
   f_{\tau,\ee}\left(z\right): =\cosh\left(\tau z\right)-\sum_{n<\fn}e^{\frac{\tau^{2}}{4}}\frac{\tau^{n}\pi^{\frac{1}{4}}\left(1+\left(-1\right)^{n}\right)}{\sqrt{2^{n}n!}}H_{n}\left(z\right).
 \end{equation}
 It is not difficult to show that this function is in $B_{\sigma,1}$
 and in fact also satisfies
 \eqref{pf5eqn3} and
 \eqref{eq:rescaled-lower-bound-on-compact-sets}\footnote{Proof is
   available upon request.}. The underlying reason for using a
 different function $f_{\tau,\ee}$ is that it is not known how to
 evaluate $P_\fn^*$  general, while \eqref{eq:new-dark-object} is an
 absolutely explicit formula.

 In \prettyref{fig:sigma-eps-fixed} we show the reconstruction and the
 corresponding errors + bounds for fixed $\tau,\ee$ in the original scaling. As can be seen
 in \prettyref{fig:error-and-bounds}, the derived bounds $E_{\tau,\ee}$ are
 reasonably accurate. In \prettyref{fig:function-and-extr} it is
 clearly seen that the algorithm chooses a reasonable value for
 $\fn$, avoiding the extreme noise outside the essential
 support of the window.

 In \prettyref{fig:epsilon-varying} the reconstruction and comparison
 were performed for fixed $z_0$ and $\sigma$, varying $\ee$. It can be
 seen that the dependence of the error on $\ee$ is accurately
 determined.  The threshold values of $\ee$ for which one moves from
 interpolation to extrapolation region ($\ee_{1\to2}$) and from
 extrapolation to forbidden region ($\ee_{2\to3}$) can be
 approximately determined as the solutions $\ee_{1\to 2}$ and
 $\ee_{2\to 3}$ of the equations
 \begin{align}
   z_0 &= \sqrt{2\fn} &&\iff& \ee_{1\to 2}&=\exp\left(-q(\ee_{1\to 2})z_0^2/2\right), \label{eq:region-boundaries12}
   \\
   z_0 &= r_\fn &&\iff& \ee_{2\to 3}&=\exp\left(-q(\ee_{2\to 3})\tau z_0\right). \label{eq:region-boundaries23}
 \end{align}

 \begin{figure}[h]
   \hspace*{-3em} \begingroup
   \captionsetup[subfigure]{width=0.45\columnwidth}

   \subfloat[The error and the bounds. Thin black lines are
   $|f_{\tau}-S_\fn(g)(z)|$ for different noise realizations (see
   \eqref{eq:data_def} and \eqref{eq:epsbd}), their maximal envelope
   is the red solid curve. The green, dashed curve is the analytical
   bound $E_{\tau,\ee}$ in \eqref{eq:num-bounds-explicit}, while the
   magenta, dashdotted curve is the minimax function $f_{\tau,\ee}$
   defined in \eqref{eq:new-dark-object}. The dotted vertical lines
   are the region
   boundaries.]{\includegraphics[width=0.5\columnwidth]{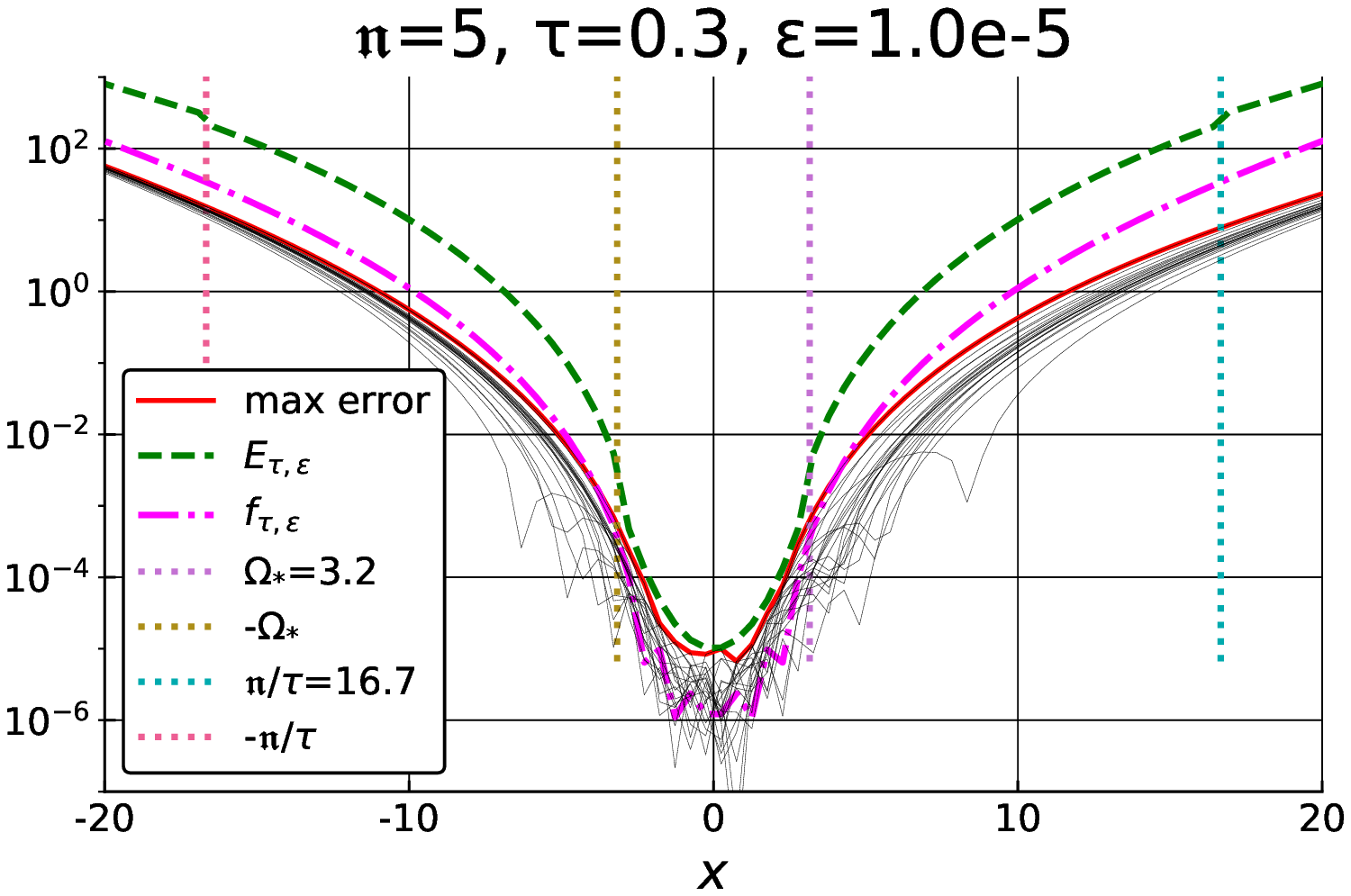}

     \label{fig:error-and-bounds}

   } \subfloat[The function (blue dashed) and its extrapolant (red
   solid). The black dots are the actual sampling points, while the
   grey curve is the noisy function $u_{2}^{-1}(x)g(x)$ which would
   need to have been used without extrapolation -- recall
   \eqref{eq:data_def}. As can be seen, the choice of $\fn$ ensures
   that the samples are taken inside the essential support of the
   window, which depends on
   $\ee$.]{\includegraphics[width=0.5\columnwidth]{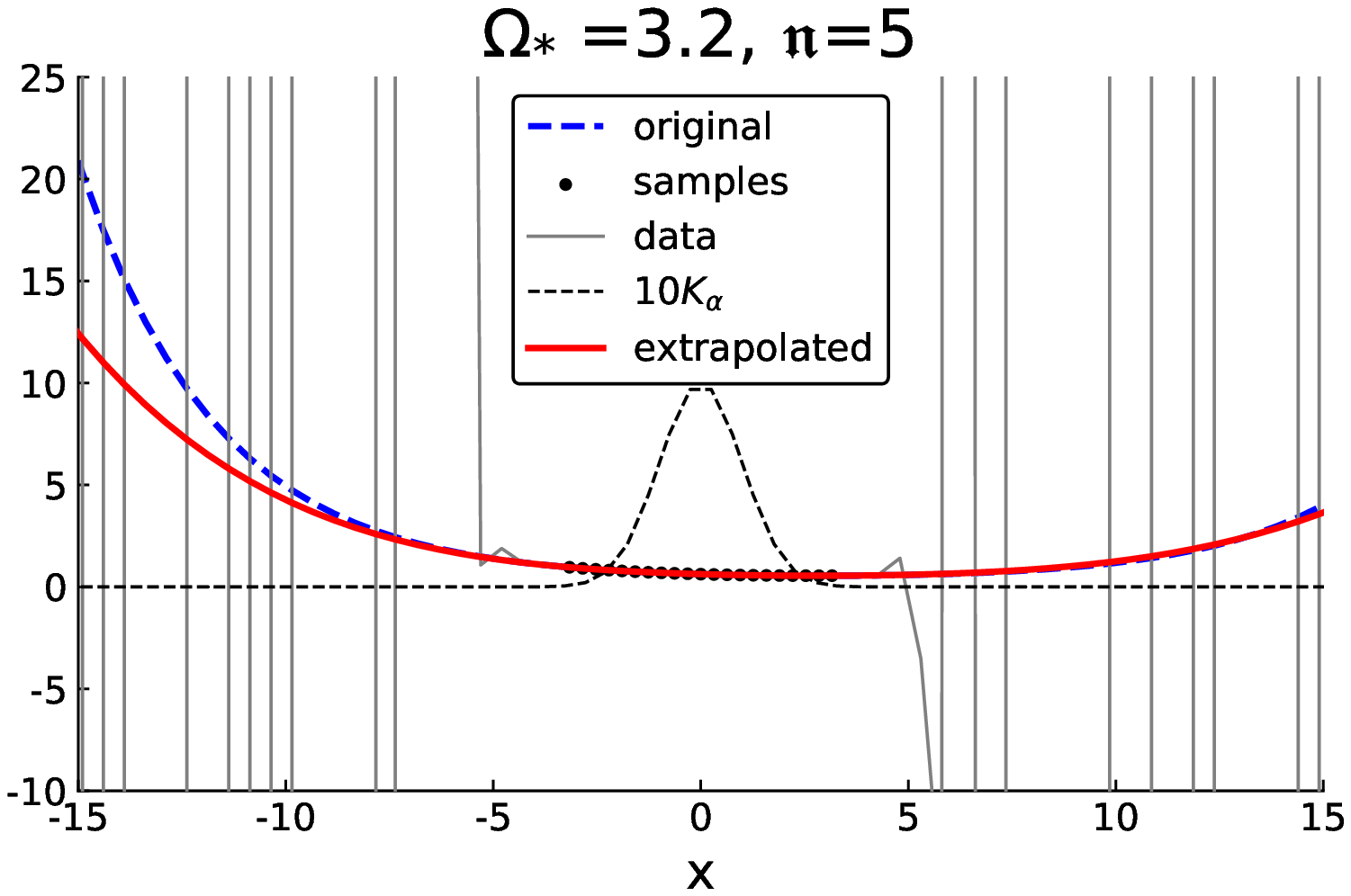}

     \label{fig:function-and-extr}

   }

   \endgroup
   \caption{Results for the function defined in \eqref{eq:fun-example}
     with $\tau=0.3$ and $\ee=10^{-5}$.}

   \label{fig:sigma-eps-fixed}
 \end{figure}

 \begin{figure}[hbtp]
   \hspace*{-3em} \subfloat[$z_0=4$, the transition between regions 1
   and 2 (black vertical line) computed numerically from
   \eqref{eq:region-boundaries12}.]{\includegraphics[width=0.55\columnwidth]{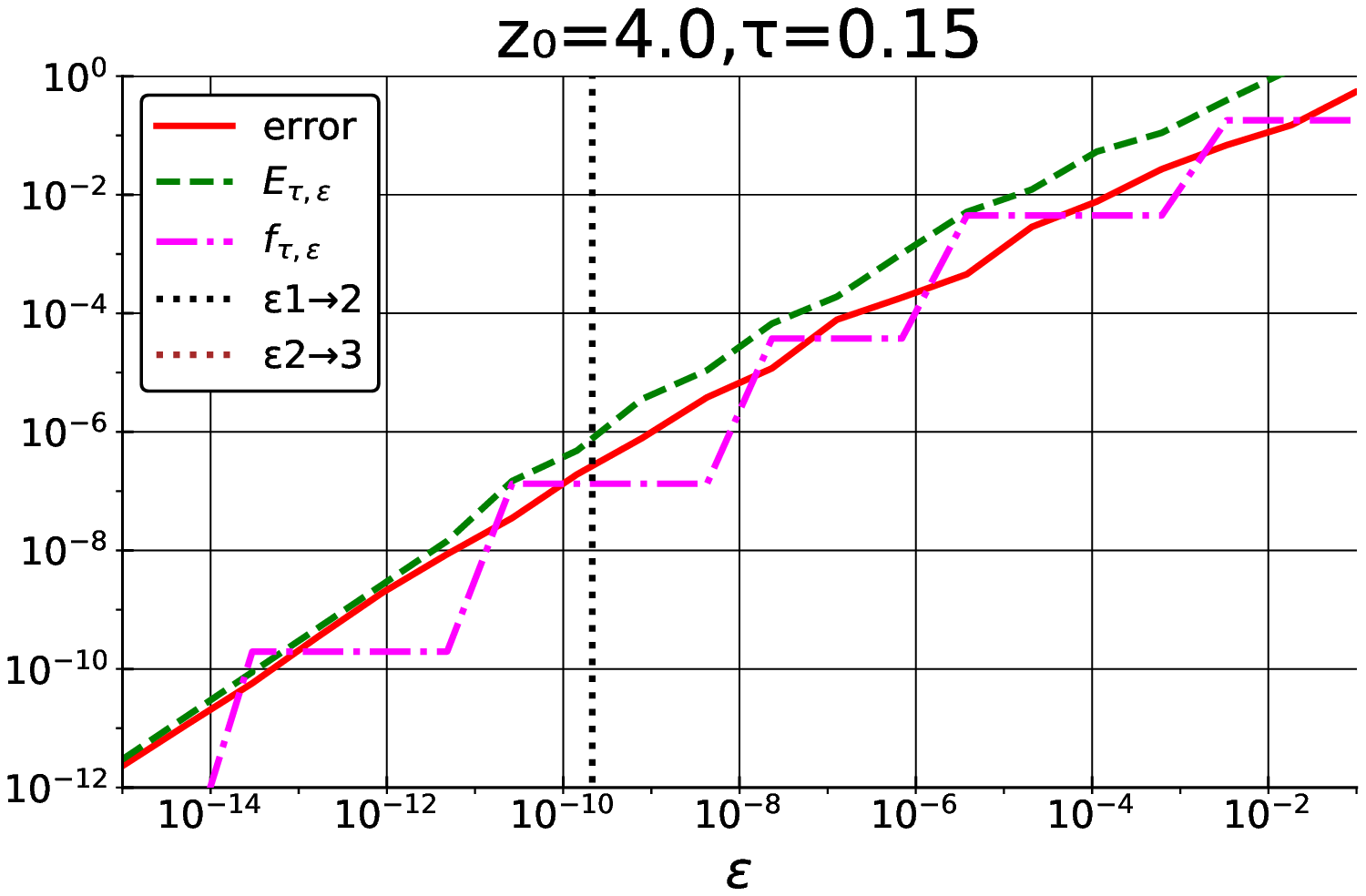}

   } \subfloat[$z_0=38$, the transition between regions 2 and 3
   (brown vertical line), computed numerically from
   \eqref{eq:region-boundaries23}.]{\includegraphics[width=0.55\columnwidth]{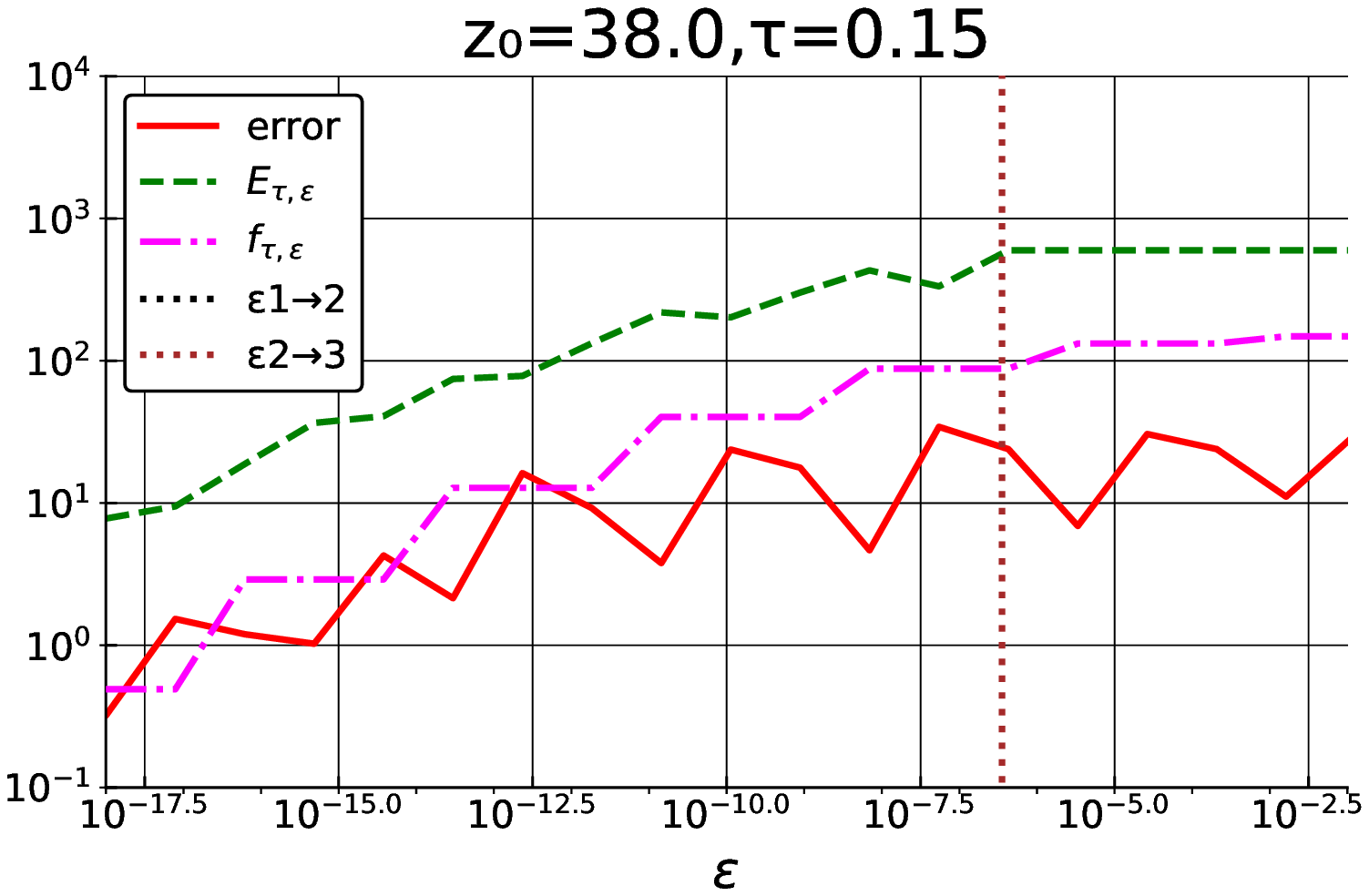}

   }

   \caption{Error (red, solid curve) vs bound $E_{\tau,\ee}$ (green,
     dashed curve) and the minimax function $f_{\tau,\ee}$ (magenta,
     dashdotted) from \eqref{eq:new-dark-object} for $\tau=0.15$, as
     function of $\ee$ for a fixed $z_0$. Same function $f_{\tau}$ as
     in \eqref{eq:fun-example}.}
   \label{fig:epsilon-varying}
 \end{figure}

\section{Preliminaries from weighted approximation theory}\label{sec:backsect}
\subsection{Weighted polynomials}
 A very important fact in the theory of weighted approximation is that the supremum
 norm of weighted polynomials is attained on an interval depending only on the 
 degree of the polynomial, and not on the individual polynomials involved. 
 We need to describe this fact in some detail. 
 Let
 \begin{equation}\label{eq:MRSno} 
 a_x(\alpha):= \beta_\alpha
 x^{1/\alpha}:=\left\{\frac{2^{\alpha-2}\Gamma(\alpha/2)^2}{\Gamma(\alpha)}\right\}^{1/\alpha}x^{1/\alpha},
 \qquad x>0, 
 \end{equation}
 be the so called Mhaskar-Rakhmanov-Saff numbers. The Ullman distribution is defined by
 \begin{equation}\label{eq:ullmandist}
 v_{\alpha}\left(t\right):=\frac{\alpha}{\pi}\int_{\left|t\right|}^{1}\frac{y^{\alpha-1}}{\sqrt{y^{2}-t^{2}}}dy,\quad\left|t\right|\le 1.
 \end{equation}
 Further denote by $U\left(z\right)$ its logarithmic potential
 \begin{equation}\label{eq:log-potential-def}
 U\left(z\right):=U(\alpha;z)=\int_{-1}^{1}\log\left|z-t\right|dv_{\alpha}\left(t\right),\quad z\in\mathbb{C}.
 \end{equation}
 The number
 \begin{equation}\label{eq:robin_const}
 F_{\alpha}:=\log(1/2)-\frac{1}{\alpha}
 \end{equation}
 is called the modified Robin's constant for $w_{\alpha}$.

 The following is proved in \cite[Theorem~6.4.2]{mhaskar1997introduction}, in fact, for all $\alpha>0$.

 \begin{thm}\label{theo:mhas_saff_theo}
   Let $\alpha>0$. Then
   \begin{enumerate}[label=(\alph*)]
   \item The potential $U$ satisfies
     \begin{equation}\label{eq:frostman}
       \begin{split}
         U(\alpha;x)=|\beta_\alpha x|^\alpha + F_\alpha, &\mbox{ if $x\in [-1,1]$,}\\
         U(\alpha;x)< |\beta_\alpha x|^\alpha + F_\alpha, &\mbox{ if
           $x\in \RR\setminus [-1,1]$.}
       \end{split}
     \end{equation}
   \item If $n>0$, $P\in\Pi_n$, then
     \begin{equation}\label{eq:Pmain_int_est}
       |P(z)| \le \exp\left(nU(\alpha;z/a_n)-nF_\alpha\right)\max_{x\in [-a_n,a_n]}|w_\alpha P(x)|.
     \end{equation}
     In particular,
     \begin{equation}\label{eq:sup_range_ineq}
       \|w_\alpha P\|_\infty = \max_{x\in [-a_n,a_n]}|w_\alpha(x) P(x)|.
     \end{equation}
   \end{enumerate}
 \end{thm}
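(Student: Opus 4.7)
The plan is to recognise the statement as the classical Mhaskar--Saff theorem: part (a) asserts that $v_\alpha$ is the Frostman equilibrium measure on $[-1,1]$ for the external field $Q(t)=|\beta_\alpha t|^\alpha$, and part (b) is a Bernstein--Walsh inequality deduced from (a) by a maximum-principle argument.

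For part (a), I would evaluate $U(\alpha;x)=\int_{-1}^1\log|x-t|\,dv_\alpha(t)$ in closed form. Substituting the definition \eqref{eq:ullmandist} and applying Fubini (for fixed $y\in(0,1)$ the $t$-region becomes $[-y,y]$) gives
\[
U(\alpha;x)=\frac{\alpha}{\pi}\int_{0}^{1}y^{\alpha-1}\!\int_{-y}^{y}\frac{\log|x-t|}{\sqrt{y^{2}-t^{2}}}\,dt\,dy.
\]
The inner integral is handled via the substitution $t=y\sin\theta$, reducing to the classical evaluation of $\int_{-\pi/2}^{\pi/2}\log|x-y\sin\theta|\,d\theta$, which equals $\pi\log(y/2)$ when $|x|\le y$ and $\pi\log(|x|+\sqrt{x^{2}-y^{2}})-\pi\log 2$ when $|x|>y$. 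For $x\in[-1,1]$ the outer integration splits at $y=|x|$; an integration by parts combined with a short beta-function computation both identifies the prefactor $\beta_\alpha$ as in \eqref{eq:mrs-1st} and produces the equality $U(\alpha;x)=|\beta_\alpha x|^\alpha+F_\alpha$ with $F_\alpha=\log(1/2)-1/\alpha$. For $|x|>1$ the inner integrand is strictly larger on a set of positive $y$-measure (the $\log(|x|+\sqrt{x^2-y^2})$ branch replaces $\log y$), and the same outer integration yields the strict inequality.

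For part (b), set $M:=\max_{x\in[-a_n,a_n]}|w_\alpha(x)P(x)|$ and define
\[
h(z):=\log|P(a_n z)|-nU(\alpha;z)+nF_\alpha-\log M.
\]
Because $v_\alpha$ is supported on $[-1,1]$, the potential $U(\alpha;\cdot)$ is harmonic on $\CC\setminus[-1,1]$, so $h$ is subharmonic there. Since $v_\alpha$ is a probability measure, $U(\alpha;z)=\log|z|+O(|z|^{-1})$ as $|z|\to\infty$, so $h(z)\le(\deg P-n)\log|z|+O(1)$ remains bounded above at infinity whenever $\deg P\le n$. For $t\in[-1,1]$, using $|a_n t|^\alpha=n|\beta_\alpha t|^\alpha$ together with part (a) gives
\[
h(t)=\log|P(a_n t)|-n|\beta_\alpha t|^\alpha-\log M=\log|w_\alpha(a_n t)P(a_n t)|-\log M\le 0.
\]
The maximum principle on the unbounded component $\CC\setminus[-1,1]$ then forces $h\le 0$ throughout $\CC$, and after the rescaling $z\mapsto z/a_n$ this is exactly \eqref{eq:Pmain_int_est}. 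The identity \eqref{eq:sup_range_ineq} follows at once: for real $x$ with $|x|>a_n$, plugging $z=x/a_n$ into \eqref{eq:Pmain_int_est} and multiplying by $w_\alpha(x)$, the strict inequality in part (a) makes the resulting exponent strictly negative, so $|w_\alpha(x)P(x)|<M$; hence the weighted supremum is attained on $[-a_n,a_n]$.

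The main obstacle is the explicit evaluation in (a): bookkeeping the Fubini swap, handling the two regimes $|x|\le y$ and $|x|>y$ in the inner Chebyshev integral, and pinning down both $\beta_\alpha$ and $F_\alpha$ via the appropriate beta-function identity. Once (a) is in hand, (b) is a textbook application of the subharmonic maximum principle.
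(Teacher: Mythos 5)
The paper does not actually prove this statement --- it is quoted verbatim from Theorem~6.4.2 of the weighted-approximation monograph cited as \cite{mhaskar1997introduction} --- so there is no internal proof to compare against. Your strategy is the standard one for that result: verify the Frostman conditions for the Ullman measure by direct evaluation of the potential, then run a Bernstein--Walsh/maximum-principle argument for part (b). Part (b) as you wrote it is correct and complete: $h$ is subharmonic on $\CC\setminus[-1,1]$, bounded above near infinity because $U(z)=\log|z|+o(1)$ and $\deg P\le n$, and $\le 0$ on the (non-polar) set $[-1,1]$ by part (a) together with $n|\beta_\alpha t|^\alpha=|a_nt|^\alpha$; the maximum principle then gives \eqref{eq:Pmain_int_est}, and \eqref{eq:sup_range_ineq} follows from the strict inequality in (a). The equality in part (a) also goes through as you sketch it: after the Fubini swap and the Chebyshev evaluation of the inner integral, the constant reduces to $\frac{1}{\alpha}+\alpha\int_0^{\pi/2}\sin^{\alpha-1}\phi\,\cos\phi\,\log(1+\cos\phi)\,d\phi=\frac{\sqrt{\pi}\,\Gamma(\alpha/2)}{2\Gamma((\alpha+1)/2)}$, which equals $\beta_\alpha^\alpha$ by the Legendre duplication formula, and $F_\alpha=\log(1/2)-1/\alpha$ drops out.

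The one genuine gap is your justification of the \emph{strict inequality} for $x\in\RR\setminus[-1,1]$. Observing that $\log\bigl(|x|+\sqrt{x^2-y^2}\bigr)-\log 2>\log(y/2)$ for $y<|x|$ only shows $U(x)>F_\alpha$, i.e.\ it bounds $U$ from \emph{below}; what is claimed is an upper bound $U(x)<|\beta_\alpha x|^\alpha+F_\alpha$, so the comparison points in the wrong direction. The repair stays within your computation: for $x>1$ one has $\frac{d}{dx}\bigl[U(x)-|\beta_\alpha x|^\alpha\bigr]=\alpha\int_0^1\frac{y^{\alpha-1}}{\sqrt{x^2-y^2}}\,dy-\alpha\beta_\alpha^\alpha x^{\alpha-1}$, and since $\beta_\alpha^\alpha x^{\alpha-1}=\int_0^x\frac{u^{\alpha-1}}{\sqrt{x^2-u^2}}\,du$ (substitute $u=xy$ in the beta integral), the derivative is strictly negative for $x>1$ because the integration range $[0,1]$ is a proper subset of $[0,x]$. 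Hence $U(x)-|\beta_\alpha x|^\alpha$ is strictly decreasing on $(1,\infty)$ and equals $F_\alpha$ at $x=1$, which gives the strict inequality (and by symmetry on $(-\infty,-1)$). With that substitution your proof is complete and coincides with the standard argument behind the cited theorem.
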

 The following theorem gives an analogue of \eqref{eq:sup_range_ineq} in the case of the $L^p$ norms.
 The proof of this theorem is essentially in \cite{mhaskar1997introduction}, but proved in \cite[Theorem~2.2]{mhaskar2002onthe} in the form given below.

 \begin{thm}\label{theo:summarytheo} 
 Let $1\le p<\infty$, $\alpha>1$, $a_x$ be as in
 \eref{eq:MRSno}.  For any $\eta>0$, integer $m\ge c\log(1/\eta)$ and $P\in \Pi_m$,
 we have 
 \begin{equation}\label{eq:aroundendpt} 
 \int_{|y|\ge a_m(\alpha)(1+A(\alpha,\eta) /(pm)^{2/3})}
 |P(y)w_\alpha(y)|^pdy \le \eta\|w_\alpha P\|_p^p, 
 \end{equation} 
 where 
 \begin{equation}\label{eq:lubinsky_const} 
 A(\alpha,\eta) \ge
 \left\{\left(\frac{2}{3}\alpha\min (2^{\alpha-2},
 1/(\alpha-1))\right)^{-1}\left(\log(2/(2-\sqrt{3})^2)+\log(1/\eta)\right)\right\}^{2/3}.
 \end{equation} 
 \end{thm}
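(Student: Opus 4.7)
The plan is to leverage the pointwise infinite--finite range inequality \eqref{eq:Pmain_int_est} of \prettyref{theo:mhas_saff_theo} and convert it into the asserted $L^p$-tail bound. Setting
\[
\psi(x) := U(\alpha;x) - F_\alpha - |\beta_\alpha x|^\alpha,
\]
and using $a_m^\alpha = \beta_\alpha^\alpha m$ to rewrite $|y|^\alpha = m\,|\beta_\alpha\,y/a_m|^\alpha$, inequality \eqref{eq:Pmain_int_est} rearranges to
\[
|P(y)w_\alpha(y)| \;\le\; \|w_\alpha P\|_\infty \exp\bigl(m\,\psi(y/a_m)\bigr), \qquad y \in \RR.
\]
By part (a) of \prettyref{theo:mhas_saff_theo} we have $\psi \equiv 0$ on $[-1,1]$ and $\psi < 0$ outside, so the whole question reduces to quantifying how negative $\psi$ becomes just outside its zero set.

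Next I would establish a one-sided endpoint estimate
\[
\psi(x) \;\le\; -c_\alpha (|x|-1)^{3/2}, \qquad 1 \le |x| \le 2,
\]
together with $\psi(x) \le -c_\alpha'|\beta_\alpha x|^\alpha$ for $|x| \ge 2$. The $3/2$-exponent is the classical consequence of the square-root endpoint vanishing $v_\alpha(t) \sim c_\alpha\sqrt{1-t^2}$ as $t \to \pm 1$ --- a short calculation directly from \eqref{eq:ullmandist} --- propagated through the logarithmic potential \eqref{eq:log-potential-def}. This $3/2$ scaling is precisely what produces the $(pm)^{2/3}$-width of the transition region in the theorem.

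Given these bounds, I would change variables $x = y/a_m$ to obtain
\[
\int_{|y| \ge a_m(1+\delta)} |P w_\alpha|^p \, dy \;\le\; a_m \|w_\alpha P\|_\infty^p \int_{|x| \ge 1+\delta} e^{pm\,\psi(x)} \, dx,
\]
and with $\delta = A(\alpha,\eta)/(pm)^{2/3}$ and the substitution $v = (pm)^{2/3}(x-1)$ on the dominant near-endpoint piece, this is bounded by
\[
C\,a_m\,(pm)^{-2/3}\,\|w_\alpha P\|_\infty^p\,\exp\!\bigl(-c_\alpha A^{3/2}\bigr).
\]
A standard Nikolskii-type inequality for the Freud weight gives $\|w_\alpha P\|_\infty \le C\,m^{\kappa}\,\|w_\alpha P\|_p$ for some $\kappa = \kappa(\alpha,p)$, and the explicit form of $A(\alpha,\eta)$ in \eqref{eq:lubinsky_const} makes $\exp(-c_\alpha A^{3/2}) \le c\,\eta$; the hypothesis $m \ge c\log(1/\eta)$ absorbs the polynomial prefactors in $m$ into the remaining $\eta$-budget, yielding the claimed bound.

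The main obstacle is the explicit endpoint estimate for $\psi$ with constants that track cleanly in $\alpha > 1$: it is exactly this bookkeeping that determines the particular form of $A(\alpha,\eta)$ in \eqref{eq:lubinsky_const}, including the unusual-looking term $\log(2/(2-\sqrt{3})^2)$ and the min in the constant. The computation itself is a careful but not conceptually deep exercise in weighted potential theory, which is why the full proof is placed in the monograph \cite{mhaskar1997introduction} and the companion paper \cite{mhaskar2002onthe} rather than reproduced here.
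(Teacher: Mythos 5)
First, a point of reference: the paper does not prove \prettyref{theo:summarytheo} at all --- it is imported verbatim, with the surrounding text pointing to \cite[Theorem~2.2]{mhaskar2002onthe} and to \cite{mhaskar1997introduction} for the proof. So the comparison here is between your sketch and the cited literature. Your architecture is the standard and correct one: the pointwise infinite--finite range inequality \eqref{eq:Pmain_int_est} reduces everything to $\psi(x)=U(\alpha;x)-F_\alpha-|\beta_\alpha x|^\alpha$, which vanishes on $[-1,1]$ by \eqref{eq:frostman} and decays like $-c_\alpha(|x|-1)^{3/2}$ just outside because $v_\alpha$ vanishes like a square root at $\pm 1$; this is indeed the source of the $(pm)^{2/3}$ width, and your reading of where $\tfrac{2}{3}$, $\min(2^{\alpha-2},1/(\alpha-1))$ and $2-\sqrt{3}=1/(2+\sqrt{2^2-1})$ come from is plausible.

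The gap is in the last step. After your change of variables the near-endpoint tail is bounded by roughly $a_m\,(pm)^{-2/3}\,\|w_\alpha P\|_\infty^p\,e^{-c_\alpha A^{3/2}}$, and the Nikol'skii inequality for Freud weights costs $\|w_\alpha P\|_\infty^p\le C\,(m/a_m)\,\|w_\alpha P\|_p^p$ (the sharp exponent is the bulk one, $m/a_m$, not $m^{2/3}/a_m$). The product of prefactors is therefore of order $m^{1/3}$, which grows with $m$, while the exponential gain $e^{-c_\alpha A^{3/2}}\le c\,\eta$ extracted from \eqref{eq:lubinsky_const} is independent of $m$ and has no slack left to spend. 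Your proposed fix --- that ``the hypothesis $m\ge c\log(1/\eta)$ absorbs the polynomial prefactors'' --- cannot work: that hypothesis is a \emph{lower} bound on $m$ and does nothing to control quantities that increase with $m$. Its actual role is to guarantee $\delta=A(\alpha,\eta)/(pm)^{2/3}\le \mathrm{const}$, so that the transition point $1+\delta$ stays in the window $[1,2]$ where the endpoint expansion of $\psi$ is valid. To close the argument with an $m$-independent $A$ one must avoid the lossy detour through the sup norm and compare the tail directly with the $L^p$ mass on the bulk (which is what the proofs in \cite{mhaskar1997introduction,mhaskar2002onthe} do, working with $|P|^p$ throughout); the alternative of letting $A$ grow like $(\log m)^{2/3}$ would change the statement being proved.
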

 In the sequel, we will use the notation
 \begin{equation}\label{eq:Deltadef}
 \Delta_n(p,\alpha,\eta)=[-a_n(\alpha)(1+A(\alpha,\eta) /(pn)^{2/3}), a_n(\alpha)(1+A(\alpha,\eta) /(pn)^{2/3})].
 \end{equation}

\begin{defn} Let $\mathcal{T}_{n}$ denote the unique monic polynomial of degree $n$ satisfying
 \begin{equation}\label{eq:cheb_def}
 \|\mathcal{T}_{n}w_{\alpha}\|_{\infty}=\inf_{P\in\Pi_{n-1}}\|\left(\left(\cdot\right)^{n}-P\right)w_{\alpha}\|_{\infty}.
 \end{equation}
These are also called weighted Chebyshev polynomials, as the above
expression is completely analogous to the well-known property of the
classical Chebyshev polynomials $T_n$, namely that $T_n$ is the
minimax monic approximant to the zero function \cite[Theorem 3.6]{gil2007numerical}.
\end{defn}

The following proposition (see e.g.\cite[Section~6.3]{mhaskar1997introduction}, \cite[Corollary~3.3]{lubinsky1988strongasymptotics}) lists some of the required properties of $\mathcal{T}_n$. 

 \begin{prop}
   \label{prop:chebyshev_prop}
   Let $\alpha \ge 2$. Then
   \begin{enumerate}[label=(\alph*)]
   \item For $n\ge 1$, the polynomial $\mathcal{T}_n$ has $n$ simple zeros in
     $[-a_n,a_n]$:     $-x_{n,n}^*<\cdots<x_{1,n}^*$. 
   \item We have
     \begin{equation}\label{eq:cheb_norm_limit}
       \lim_{n\to\infty}\frac{\|\mathcal{T}_{n}w_{\alpha}\|_{\infty}}{a_n^n\exp(-nF_\alpha) }=(1/2)e^{-1/\alpha}.
     \end{equation}
   \item Uniformly on compact subsets of $\CC\setminus [-1,1]$, we have
     \begin{equation}\label{eq:cheb_ext_limit}
       \lim_{n\to\infty} \frac{|\mathcal{T}_{n}(a_nz)|}{a_n^n\exp(-nU(z))}= \lim_{n\to\infty} \frac{|\mathcal{T}_{n}(a_nz)|}{\|\mathcal{T}_{n}w_{\alpha}\|_{\infty} \exp(-nU(z)+nF_\alpha)} =1.
     \end{equation}
   \end{enumerate}
 \end{prop}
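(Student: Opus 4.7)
The plan is to treat the three parts separately, with Theorem~\ref{theo:mhas_saff_theo} playing the role of a common workhorse. For part (a), I would use a weighted Chebyshev alternation argument. By \eqref{eq:sup_range_ineq} the sup norm $\|\mathcal{T}_n w_\alpha\|_\infty$ is in fact attained on the compact interval $[-a_n,a_n]$. Writing $\mathcal{T}_n(x)=x^n-P(x)$ with $P\in\Pi_{n-1}$, the extremal property \eqref{eq:cheb_def} exhibits $w_\alpha P$ as a best uniform approximation to $w_\alpha(\cdot)^n$ on $[-a_n,a_n]$ from the $n$-dimensional subspace $w_\alpha\Pi_{n-1}$; since $w_\alpha>0$ on $\RR$, this subspace satisfies the Haar condition, so the classical alternation theorem forces $w_\alpha \mathcal{T}_n$ to equioscillate at $n+1$ distinct points of $[-a_n,a_n]$. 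The $n$ intervening sign changes produce $n$ simple real zeros of $\mathcal{T}_n$ inside $[-a_n,a_n]$, and since $\deg \mathcal{T}_n = n$ these exhaust the zero set.

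For parts (b) and (c) the approach is to combine the potential-theoretic upper bound from Theorem~\ref{theo:mhas_saff_theo} with strong asymptotics for weighted Chebyshev polynomials associated to Freud weights, as in \cite{lubinsky1988strongasymptotics}. The upper side of (c) is immediate from \eqref{eq:Pmain_int_est} applied to $\mathcal{T}_n$ itself, yielding $|\mathcal{T}_n(a_nz)|\le \exp(nU(z)-nF_\alpha)\|\mathcal{T}_n w_\alpha\|_\infty$ for every $z\in\CC$. A matching lower bound requires strong (not merely $n$-th root) asymptotics: one must show that the normalised sequence $\mathcal{T}_n(a_n z)/\bigl(a_n^n e^{-nU(z)}\bigr)$ admits a nonzero limit uniformly on compact subsets of $\CC\setminus[-1,1]$, which combined with evaluation at a single reference point gives both equalities in \eqref{eq:cheb_ext_limit}. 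The precise multiplicative constant $(1/2)e^{-1/\alpha}$ in (b) then emerges either by evaluating this limit along the real axis or, equivalently, by computing the weighted Chebyshev constant for the Freud weight $w_\alpha$ from the Ullman density \eqref{eq:ullmandist} and the modified Robin constant \eqref{eq:robin_const}.

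The hardest step is pinning down the sharp multiplicative constant $(1/2)e^{-1/\alpha}$: the weak ($n$-th root) asymptotic $\|\mathcal{T}_n w_\alpha\|_\infty^{1/n}/a_n\to e^{-F_\alpha}$ follows quickly from \eqref{eq:frostman} and \eqref{eq:Pmain_int_est} via a standard extremal-energy and compactness argument, but isolating the sub-exponential correction requires nontrivial machinery such as the Riemann--Hilbert method or the sharp asymptotic expansions of Lubinsky and collaborators, which I would simply quote rather than reprove. Once that sharp asymptotic is in hand on $\RR$, propagating it to $\CC\setminus[-1,1]$ for part (c) is a routine normal-family argument using \eqref{eq:Pmain_int_est} as a local uniform upper bound together with the fact, furnished by part (a), that all zeros of $\mathcal{T}_n$ lie in $[-a_n,a_n]$ and hence cannot accumulate in the exterior.
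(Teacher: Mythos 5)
First, note that the paper does not prove this proposition at all: it is imported from the literature (the surrounding text cites \cite[Section~6.3]{mhaskar1997introduction} and \cite[Corollary~3.3]{lubinsky1988strongasymptotics}), so there is no in-paper argument to compare against. Your decision to quote the strong (Lubinsky--Saff type) asymptotics for the genuinely hard content --- the sharp constant in \eqref{eq:cheb_norm_limit} and the exterior limit \eqref{eq:cheb_ext_limit} --- therefore coincides with what the paper itself does, and the parts you do argue are correct and standard: the reduction of the extremal problem from $\RR$ to $[-a_n,a_n]$ via \eqref{eq:sup_range_ineq}, the Haar/alternation argument producing $n$ simple zeros for part (a), and the one-sided bound $|\mathcal{T}_n(a_nz)|\le \|\mathcal{T}_n w_\alpha\|_\infty\exp(nU(z/a_n)-nF_\alpha)$ coming directly from \eqref{eq:Pmain_int_est}.

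One step in your sketch would not work as written: you propose to extract the constant $(1/2)e^{-1/\alpha}$ in part (b) ``by evaluating this limit along the real axis.'' The limit \eqref{eq:cheb_ext_limit} is asserted only on compact subsets of $\CC\setminus[-1,1]$, whereas $\|\mathcal{T}_n w_\alpha\|_\infty$ is attained at points $a_nx$ with $x\in[-1,1]$ (by \eqref{eq:sup_range_ineq}), i.e.\ precisely in the excluded set, so (c) gives no access to the norm. Moreover, if one tries to recover (b) by comparing the two fractions in \eqref{eq:cheb_ext_limit}, one is forced to $\|\mathcal{T}_n w_\alpha\|_\infty/(a_n^n e^{-nF_\alpha})\to 1$, which disagrees with the stated constant $(1/2)e^{-1/\alpha}=e^{F_\alpha}$; so (b) is genuinely independent information and cannot be a corollary of (c) (indeed, as printed, (b) and the first equality in (c) are mutually inconsistent by the factor $e^{F_\alpha}$, a normalisation discrepancy worth checking against \cite{lubinsky1988strongasymptotics}). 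Your alternative route --- computing the $L^\infty$ weighted Chebyshev constant for the Freud weight directly from the strong asymptotics, via the Ullman density \eqref{eq:ullmandist} and the modified Robin constant \eqref{eq:robin_const} --- is the correct one and is exactly what the cited corollary of Lubinsky supplies; part (b) should be quoted from there rather than deduced from part (c).
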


 We will also use another family of polynomials. Let $\{p_k\}_{k=0}^\infty$ be the system of orthonormalized polynomials with respect to $w_\alpha^2$; i.e., for each $k=0,1,\cdots$, $p_k(x)=\gamma_k x^k
 + \cdots\in \Pi_k$, and for $k, j=0,1,\cdots$,
 \begin{equation}\label{eq:orthogonality}
 \int_\RR p_k(t)p_j(t)w_\alpha^2(t)dt =\left\{\begin{array}{ll}
 1, &\mbox{ if $k=j$,}\\
 0, &\mbox{otherwise.}
 \end{array}\right.
 \end{equation}
 It is known \cite[Theorem~13.6]{levinlubinsky} that there exist constants $c_1, c_2>0$ depending only on $\alpha$ such that
 \begin{equation}\label{eq:ortho_poly_bd}
 c_1\le n^{1/(2\alpha)}\|w_\alpha p_n\|_1 \le c_2.
 \end{equation}

\subsection{Weighted approximation of entire functions}

 If $1\le p\le\infty$ and
 $w_\alpha f\in L^p$, we define the degree of approximation of $f$ by
 \begin{equation}\label{eq:deg_approx_def} 
 E_n(p;f)=E_n(\alpha, p;f) := \inf_{P\in\Pi_n}\| (f-P)w_\alpha\|_p, \qquad n=0,1,2,\cdots.
 \end{equation} 

 In
 \cite[Theorem~7.2.1(b)]{mhaskar1997introduction}, we have proved the
 following theorem (with a different notation). 

 \begin{thm}\label{theo:charact} 
 Let $1\le p\le \infty$, $\l>0$, $\alpha>\lambda$, $w_\alpha
 f\in L^p$, and 
 \begin{equation}\label{eq:rho1def}
 \rho_1(\alpha,p;f):=\limsup_{n\to\infty}\left\{n^{n/\lambda-n/\alpha}E_n(\alpha,p;f)\right\}^{1/n}<\infty.
 \end{equation} 
 Then $f$ has an extension to the complex plane as an entire function of
 order $\lambda$ and type $\tau$ given by 
 \begin{equation}\label{eq:sigmadef} 
 \rho_1(\alpha,p;f)=(\beta_\alpha/2)(\tau \lambda)^{1/\lambda}\exp(1/\lambda-1/\alpha) (=:\rho(\alpha,\tau,\lambda)).
 \end{equation} 
 Conversely, if $f$ is
 the restriction to the real line of an entire function of order $\lambda$ and type
 $\tau$, then $w_\alpha f\in L^p$ for every $p$, $1\le p\le \infty$, $\rho_1(\alpha,p;f)$
 defined by \eref{eq:rho1def} is finite, and \eref{eq:sigmadef} holds.
 \end{thm}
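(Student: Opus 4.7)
Let $P_n^*\in\Pi_n$ be a near-best weighted approximant, $\|(f-P_n^*)w_\alpha\|_p=E_n$. My plan is to build the entire extension via the telescoping series
\begin{equation*}
 f=P_0^*+\sum_{n=0}^{\infty}(P_{n+1}^*-P_n^*),\qquad D_n:=P_{n+1}^*-P_n^*\in\Pi_{n+1},
\end{equation*}
which converges pointwise on $\RR$ by the hypothesis, and to show that it converges on $\CC$ to an entire function of the claimed order and type. First, $\|D_n w_\alpha\|_p\le 2E_n$ by the triangle inequality. Second, because we need an $L^\infty$ bound in order to invoke \eqref{eq:Pmain_int_est}, I pass from $L^p$ to $L^\infty$ via a weighted Nikolskii-type inequality $\|Pw_\alpha\|_\infty\lesssim n^{\theta(\alpha,p)}\|Pw_\alpha\|_p$ for $P\in\Pi_n$ (the polynomial factor is harmless since it disappears under the $n$-th root). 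Applying \prettyref{theo:mhas_saff_theo}(b) to $D_n$ then yields
\begin{equation*}
 |D_n(z)|\le C\,n^{\theta}\,E_n\exp\bigl((n+1)U(\alpha;z/a_{n+1})-(n+1)F_\alpha\bigr).
\end{equation*}

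\textbf{From telescoping bound to type.} Using the classical asymptotic $U(\alpha;\zeta)=\log|\zeta|+o(1)$ as $|\zeta|\to\infty$, together with $a_{n+1}=\beta_\alpha(n+1)^{1/\alpha}$, the exponential factor simplifies to $(|z|/(a_{n+1}/e^{-F_\alpha}))^n$ up to sub-exponential corrections. Combined with the hypothesis $E_n^{1/n}\le(\rho_1+o(1))\,n^{-(1/\lambda-1/\alpha)}$, the $n$-th term of the series is essentially
\begin{equation*}
 \Bigl(\tfrac{2\rho_1}{\beta_\alpha}\,e^{F_\alpha+1/\alpha}\,|z|\,n^{-1/\lambda}\Bigr)^{n}
\end{equation*}
to leading order. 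The envelope $\sup_n (c|z|/n^{1/\lambda})^n$ is $\exp\bigl((c|z|)^\lambda/(\lambda e)\bigr)$ by elementary maximization (taking $n\sim (c|z|)^\lambda/e$), so the series converges for every $z\in\CC$ and defines an entire function of order exactly $\lambda$ and type $\tau$ obtained by setting $\rho_1=(\beta_\alpha/2)(\tau\lambda)^{1/\lambda}\exp(1/\lambda-1/\alpha)$, after collecting the constants $F_\alpha=\log(1/2)-1/\alpha$. This gives the direct half.

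\textbf{Converse direction.} Given $f$ entire of order $\lambda$ and type $\tau$, I would construct explicit polynomial approximants and estimate $E_n$ from above. A convenient choice is the partial sum $\sigma_n f:=\sum_{k=0}^{n-1}\hat f(k)p_k$ in the orthonormal system of \eqref{eq:orthogonality}, or alternatively a Hermite-type interpolant at the zeros of $\mathcal{T}_n$. Writing $f-\sigma_n f$ via a Cauchy integral on a circle $|z|=R_n$ with $R_n\sim(n/(\tau\lambda))^{1/\lambda}=r_n$ (the radius which balances the decay of $\mathcal{T}_n w_\alpha$ against the growth $\exp(\tau R_n^\lambda)$ of $f$), and applying \eqref{eq:cheb_norm_limit}, \eqref{eq:cheb_ext_limit} together with Nikolskii in the reverse direction, gives
\begin{equation*}
 E_n(\alpha,p;f)\le C(f,\varepsilon)\,(\rho_1+\varepsilon)^n\,n^{-n(1/\lambda-1/\alpha)}
\end{equation*}
for every $\varepsilon>0$, which exactly matches the required $\rho_1$. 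The $L^p$ statement for all $p$ then follows since $w_\alpha f$ has super-exponential decay off $[-a_m(1+o(1)),a_m(1+o(1))]$ by \prettyref{theo:summarytheo} applied to the polynomial tails, making all $L^p$ norms comparable up to polynomial factors.

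\textbf{Main obstacle.} The principal difficulty is not the convergence of the telescoping series nor the existence of $P_n$ in the converse, but pinning down the \emph{exact} constant $\rho_1=(\beta_\alpha/2)(\tau\lambda)^{1/\lambda}\exp(1/\lambda-1/\alpha)$. In the direct direction this requires tracking the $e^{-F_\alpha}$ factor, the precise Stirling constant in $n^{-n/\lambda}\sim(\lambda e/n)^{n/\lambda}$, and the normalization of $a_n$. In the converse, the same constant is forced by the sharp asymptotic $\lim\|\mathcal{T}_nw_\alpha\|_\infty/(a_n^n e^{-nF_\alpha})=(1/2)e^{-1/\alpha}$ from \eqref{eq:cheb_norm_limit}, which is exactly where the $\beta_\alpha/2$ and $\exp(-1/\alpha)$ factors in $\rho_1$ originate. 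Matching the upper bound in one direction to the lower bound in the other to the same constant is the delicate accounting step, but both estimates naturally funnel through the same extremal quantity, so the constants must agree.
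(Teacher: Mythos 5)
The paper does not actually prove this theorem: it is imported verbatim from \cite[Theorem~7.2.1(b)]{mhaskar1997introduction}, so there is no in-paper argument to compare against. Your sketch follows the standard route of that reference, which is also the technique visible in the paper's own proof of \prettyref{theo:compact_entire_theo} (essentially the converse half for $p=\infty$): a telescoping series of near-best weighted approximants, converted to $L^\infty$ by a weighted Nikolskii inequality and majorized off the real line via \eqref{eq:Pmain_int_est} for the direct implication; and Lagrange interpolation at the zeros of $\mathcal{T}_n$ with a Cauchy integral on a circle of radius $\approx r_n=(n/(\tau\lambda))^{1/\lambda}$ for the converse, exactly as in \eqref{pf3eqn2}--\eqref{pf3eqn6}. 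Two points to tighten: (i) your displayed ``$n$-th term'' carries the factor $e^{F_\alpha+1/\alpha}=1/2$ where it should carry $e^{-F_\alpha}=2e^{1/\alpha}$ (your preceding line $|z|e^{-F_\alpha}/a_{n+1}$ has the sign right; propagating the wrong sign would miss \eqref{eq:sigmadef} by a factor $2e^{1/\alpha}$); and (ii) each half of your argument only yields a one-sided inequality for $\rho_1$, so the exact equality \eqref{eq:sigmadef} requires playing the two halves against each other (if $\rho_1$ were strictly smaller than $\rho(\alpha,\tau,\lambda)$, the direct half would force $f$ to have smaller type, and conversely) --- your closing paragraph gestures at this but does not carry it out.
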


\section{Proofs}
\label{sec:proofs}

\subsection{Asymptotics as $n\to\infty$}

  In the sequel, the symbols $c, c_1,\cdots$ will denote positive
 constants depending on $\alpha,\tau,\lambda$, and other explicitly indicated quantities
 only.  Their values may be different in different occurrences, even within the
 same formula.

We shall be using the following notation to compare decay of sequences
up to sub-exponential factors.
 \begin{defn}
   \label{def:asypmt-not-n}

   For sequences $\{A_n\}$, $\{B_n\}$, we write $A_n\precapprox B_n$
   (or $B_n\succapprox A_n$) to denote the fact that
there exists a sequence $\{M_n\}$ with
   $\lim_{n\to\infty}M_n^{1\over n}=1$, the limit being uniform in
   $\alpha,\tau,\lambda$ such that
   $$
   A_n \leq M_n B_n.
   $$
   Equivalently, for any $\delta>0$, there exists $c>0$, depending on
   $\alpha,\tau,\lambda,\delta$ and other explicitly indicated
   quantities only such that
   $$
   A_n\le c\cdot (1+\delta)^n B_n,\qquad n\in\NN.
   $$
 \end{defn}

 \prettyref{prop:asymptotic-order-eps} below relates the above
 condition to \prettyref{def:asymptotics-eps}.


 Recall the weighted Chebyshev polynomials $\mathcal{T}_n$ from
\eqref{eq:cheb_def} and \prettyref{prop:chebyshev_prop}. The following
theorem is proved implicitly in the course of the  proof of Lemma~7.2.5 in \cite{mhaskar1997introduction}, but we will sketch a proof again since it is not stated explicitly there.

 \begin{thm}\label{theo:compact_entire_theo}
 Let $\alpha\ge 2$, $0<\lambda<\alpha$, $\tau>0$, and $f\in B_{\tau, \lambda}$. 
 For integer $n\ge 1$ let $L_n(f)$ be the unique polynomial 
 of Lagrange interpolation in $\Pi_{n-1}$ that satisfies $f(x_{k,n}^*)= L_n(f)(x_{k,n}^*)$ at each of the zeros $x_{k,n}^*$ of $\mathcal{T}_n$, $k=1,\cdots,n$.
 Let $\{ b_k\}$ be a sequence such that 
 \begin{equation}\label{eq:bkcond}
 \lim_{k\to\infty} \frac{a_k}{b_k}=0.
 \end{equation}
 \begin{enumerate}
 \item If $z\in\CC$, $|z|\le b_n$, then
 \begin{equation}\label{eq:inter-error-mid}
 \left|f(z)-L_n(f)(z)\right| \precapprox \left|\mathcal{T}_{n}\left(z\right)\right|b_n^{-n}\exp(\tau b_n^\lambda)
 \precapprox \exp(nU(z/a_n)-nF_\alpha)b_n^{-n}\exp(\tau b_n^\lambda).
 \end{equation}
 \item If  $z\in\CC$, $|z|>b_n$, then
 \begin{equation}\label{eq:inter-error-large}
 \left|f(z)-L_n(f)(z)\right| \precapprox \exp(\tau |z|^\lambda).
 \end{equation}
 \item In particular,
 \begin{align}
   E_n(\infty,f) &\le \|(f-L_n(f))w_\alpha\|_\infty \label{eq:uniformity}\\
   &\precapprox \frac{\rho(\alpha,\tau,\lambda)^n}{n^{n/\lambda-n/\alpha}}\label{eq:uniformity2}.
 \end{align}
\end{enumerate}
The relations hold uniformly on compact subsets in $z$.

\end{thm}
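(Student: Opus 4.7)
The plan is to employ the classical Hermite contour-integral representation of the Lagrange interpolation error at the zeros of $\mathcal{T}_n$: for any simple closed contour $\Gamma$ enclosing $z$ and the nodes $\{x_{k,n}^*\}$,
\[
f(z)-L_n(f)(z) = \frac{\mathcal{T}_n(z)}{2\pi i}\oint_\Gamma \frac{f(\zeta)}{\mathcal{T}_n(\zeta)(\zeta-z)}\,d\zeta.
\]
All three statements will emerge from the ML-inequality along a circle $|\zeta|=R$ with $R$ tailored to $z$. Since $f\in B_{\tau,\lambda}$, on $\Gamma$ one has $|f(\zeta)|\le\exp(\tau R^\lambda)$, while monicity of $\mathcal{T}_n$ yields $|\mathcal{T}_n(\zeta)|\ge (R-a_n)^n$; the key sub-exponential fact $(1-a_n/R)^n\succapprox 1$ (equivalent to $a_n/R\to 0$, guaranteed by \eqref{eq:bkcond}) lets me effectively replace this lower bound by $R^n$ up to factors absorbed into $\precapprox$.

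For part~(1) I would take $R=b_n(1+1/n)$, still obeying $R/a_n\to\infty$ but also $R-|z|\ge b_n/n$ when $|z|\le b_n$. The ML-inequality then gives $|f(z)-L_n(f)(z)|\precapprox |\mathcal{T}_n(z)|\,b_n^{-n}\exp(\tau b_n^\lambda)$, which is the first bound of \eqref{eq:inter-error-mid}; residual factors such as $R/(R-|z|)\le n+1$, $(R/b_n)^{\pm n}\to e^{\pm 1}$ and $\exp(\tau(R^\lambda-b_n^\lambda))$ are each $n$-th-root-asymptotic to $1$. The second bound of \eqref{eq:inter-error-mid} follows from \eqref{eq:Pmain_int_est} applied with $P=\mathcal{T}_n$ together with the sharp asymptotic \eqref{eq:cheb_norm_limit}. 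For part~(2) I would inflate the contour to $R=(1+\eta)|z|$ for a small fixed $\eta>0$; the identical bookkeeping together with the monic tail $|\mathcal{T}_n(z)|/|z|^n\to 1$ on compact $z$-sets for which $|z|/a_n\to\infty$ delivers \eqref{eq:inter-error-large}.

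Part~(3) is obtained from part~(1) by multiplying by $w_\alpha(x)$ and taking the supremum over $\RR$. On $[-a_n,a_n]$ the identity \eqref{eq:sup_range_ineq} applied to $\mathcal{T}_n\in\Pi_n$ gives $\sup_{|x|\le a_n}|\mathcal{T}_n(x)w_\alpha(x)|=\|\mathcal{T}_n w_\alpha\|_\infty$; outside this interval the super-exponential decay of $w_\alpha$ (valid since $\alpha>\lambda$) dominates both $|f(x)|\le\exp(\tau|x|^\lambda)$ and the polynomial $L_n(f)\in\Pi_{n-1}$ (whose weighted sup is likewise attained inside $[-a_n,a_n]$ by \eqref{eq:sup_range_ineq}), so the tail produces only a bound of the same or smaller order. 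After inserting \eqref{eq:cheb_norm_limit} for $\|\mathcal{T}_n w_\alpha\|_\infty$ I am free to optimize the auxiliary parameter $b_n$: elementary calculus places the minimum of $b\mapsto b^{-n}\exp(\tau b^\lambda)$ at $b=r_n=(n/(\tau\lambda))^{1/\lambda}$, with value $(\tau\lambda e/n)^{n/\lambda}$, and $a_n/r_n\to 0$ precisely because $\alpha>\lambda$. Collecting the resulting factors yields $\rho(\alpha,\tau,\lambda)^n/n^{n/\lambda-n/\alpha}$ up to sub-exponential residuals, establishing \eqref{eq:uniformity2}; the bound \eqref{eq:uniformity} is immediate from the definition of $E_n(\infty,f)$.

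The delicate point I anticipate is the precise matching of exponential-order constants in part~(3), since a naive ML-estimate typically overshoots by an exponential factor; using the sharp asymptotic \eqref{eq:cheb_norm_limit} rather than the cruder majorant coming directly from \eqref{eq:Pmain_int_est}, and verifying that every auxiliary factor accumulated along the contour ($R/(R-|z|)$, $(1-a_n/R)^n$, $R^\lambda-b_n^\lambda$) is genuinely $n$-th-root-asymptotic to $1$, is where the proof performs its real work. The $\precapprox$ calculus of Definition~5.1 is designed precisely to absorb the polynomial-in-$n$ and other sub-exponential residuals cleanly.
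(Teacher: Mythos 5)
Your parts (1) and (2) follow the paper's argument essentially verbatim: the Hermite contour formula for the Lagrange error at the zeros of $\mathcal{T}_n$, an ML-estimate on a circle of radius slightly exceeding $b_n$ (resp.\ $|z|$), and the two-sided comparison $|\mathcal{T}_n(\zeta)|\approx |\zeta|^n$ for $|\zeta|\gtrsim b_n$ coming from the fact that all zeros lie in $[-a_n,a_n]$ together with $a_n/b_n\to 0$. One caution on part (2): the paper takes the contour radius $r\in[(1+1/n)|z|,(1+2/n)|z|]$, whereas a fixed inflation $R=(1+\eta)|z|$ costs a factor $\exp\bigl(\tau|z|^\lambda((1+\eta)^\lambda-1)\bigr)$, which is a genuine exponential in $n$ once $|z|^\lambda\gtrsim n$ (as it is for $|z|>b_n\ge r_n$) and is therefore not absorbed by $\precapprox$; you need the inflation to shrink like $1/n$.

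The genuine gap is in part (3), in the region $a_n<|x|\lesssim r_n$. You propose to bound $|f(x)|w_\alpha(x)$ and $|L_n(f)(x)|w_\alpha(x)$ \emph{separately} outside $[-a_n,a_n]$ and claim the decay of $w_\alpha$ makes this tail ``of the same or smaller order.'' That fails: already for $f\equiv 1\in B_{\tau,\lambda}$ one has $\sup_{|x|>a_n}|f(x)|w_\alpha(x)=\exp(-a_n^\alpha)=\exp(-\beta_\alpha^\alpha n)$, which is only exponentially small in $n$, while the target $\rho(\alpha,\tau,\lambda)^n\,n^{-(n/\lambda-n/\alpha)}$ decays like $\exp\bigl(-(1/\lambda-1/\alpha)\,n\log n\bigr)$, i.e.\ faster than any $e^{-cn}$. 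So separate bounds on $f$ and $L_n(f)$ overshoot by a super-exponential factor; the cancellation between $f$ and its interpolant must be retained throughout $[-r_n,r_n]$, not just on $[-a_n,a_n]$. The paper does exactly this: it applies the part-(1) bound with $b_n=r_n$ for all $x\in[-r_n,r_n]$, using that $|\mathcal{T}_n(x)w_\alpha(x)|\le\|\mathcal{T}_nw_\alpha\|_\infty$ for \emph{every} real $x$ by \eqref{eq:sup_range_ineq}, and then upgrades the resulting estimate on $[-r_n,r_n]$ to the full weighted sup norm on $\RR$ via the telescoping argument of \cite[Lemma~7.2.4]{mhaskar1997introduction} (writing the error as a sum of polynomial differences $L_{2^{k+1}n}(f)-L_{2^kn}(f)$, each of whose weighted sup norm is attained inside an interval where the interval estimate applies). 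Your identification of $b=r_n$ as the minimizer of $b^{-n}e^{\tau b^\lambda}$, the value $(\tau\lambda e/n)^{n/\lambda}$, and the resulting constant $\rho(\alpha,\tau,\lambda)$ are all correct and match the paper; only the passage from the interval estimate to the global weighted norm needs to be repaired.
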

 \begin{proof}[Proof of \prettyref{theo:compact_entire_theo}]
 Since all the zeros of $\mathcal{T}_{n}$
 are in $[-a_n,a_n]$, we obtain for $\zeta\in\CC$, $|\zeta|\ge b_n/2$ that
 \begin{eqnarray}\label{pf3eqn1}
 \left|\mathcal{T}_n(\zeta)\right|&=&\left|\prod_{k=1}^n (\zeta-x_{k,n}^*)\right|\ge |\zeta|^n\left(1-\frac{2a_{n}}{b_n}\right)^{n}\succapprox |\zeta|^n\\
 \left|\mathcal{T}_n(\zeta)\right|&\le & |\zeta|^n\left(1+\frac{2a_{n}}{b_n}\right)^{n}\precapprox |\zeta|^n.
 \end{eqnarray}
 These hold uniformly in compact subsets in $z$.
 We now choose $r$ satisfying $(1+1/n)b_n\le r \le (1+2/n)b_n$. Then for $|z|\le b_n$, the standard formula for the error in Lagrange interpolation (cf. \cite[P.~50, formula (4)]{walsh1935interpolation}) states that
 \begin{equation}\label{pf3eqn2}
 f(z)-L_n(f)(z)=\frac{\mathcal{T}_n(z)}{2\pi i}\oint_{|\zeta|=r}\frac{f(\zeta)}{\mathcal{T}_n(\zeta)(\zeta-z)}d\zeta.
 \end{equation}
 If $|\zeta|=r$, then the definition of $B_{\tau,\lambda}$ and \eref{pf3eqn1}
 yield
 \begin{equation}\label{pf3eqn3}
 |f(\zeta)|\le \exp(\tau r^\lambda)\precapprox \exp(\tau b_n^\lambda), \qquad 
 |\mathcal{T}_n(\zeta)|\succapprox r^n \succapprox (1+1/n)^n b_n^n, \quad |\zeta-z| \ge 2b_n/n.
 \end{equation}
 Therefore, we deduce using \eref{pf3eqn2} that
 \begin{eqnarray}\label{pf3eqn4}
 \left|f(z)-L_n(f)(z)\right| & \le &\frac{\left|\mathcal{T}_{n}\left(z\right)\right|}{2\pi}\oint_{\left|\zeta\right|=r}\frac{\left|f\left(\zeta\right)\right|\left|d\zeta\right|}{\left|\mathcal{T}_{n}\left(\zeta\right)\right|\left|\zeta-z\right|}\nonumber\\
 &\precapprox& \disp \frac{\left|\mathcal{T}_{n}\left(z\right)\right|}{\pi}\cdot\frac{n}{b_n}\cdot b_n^{-n}\left(1+\frac{1}{n}\right)^{-n}\exp(\tau b_n^\lambda) \oint_{\left|\zeta\right|=r}\left|d\zeta\right|\nonumber\\
 &\precapprox& \disp \left|\mathcal{T}_{n}\left(z\right)\right|b_n^{-n}\exp(\tau b_n^\lambda).
 \end{eqnarray}
 This completes the proof of the first inequality in
 \eref{eq:inter-error-mid}. Since \eqref{pf3eqn3} holds for all
 $|z|\leq b_n$, the final inequality above holds uniformly in $z$ as well. The second inequality follows from Proposition~\ref{prop:chebyshev_prop} and Theorem~\ref{theo:mhas_saff_theo}.

 Next, if $|z|\ge b_n$, then we use the same argument as in
 \eref{pf3eqn4} with $|z|(1+1/n)\le r\le |z|(1+2/n)$, using also
 \eref{pf3eqn1} to obtain \eref{eq:inter-error-large}, uniformly on
 compact subsets in $z$.

 Next, let $r_n$ be defined as in \eref{eq:rn-def}, i.e.,
 $$
 r_{n}:=\left(\frac{n}{\tau\lambda}\right)^{\frac{1}{\lambda}}.
 $$
 Since $\lambda<\alpha$, the condition \eref{eq:bkcond} is satisfied with $r_n$ in place of $b_n$. So, for $|z|\le r_n$,
 \begin{equation}\label{pf3eqn5}
 |f(z)-L_n(f)(z)| \precapprox |\mathcal{T}_n(z)|r_n^{-n}\exp(\tau r_n^\lambda)=|\mathcal{T}_n(z)|\left(\frac{n}{\tau\lambda e}\right)^{-\frac{n}{\lambda}}.
 \end{equation}
 In particular, this estimate holds for $x\in [-r_n,r_n]$ replacing $z$, so that using \eref{eq:cheb_norm_limit} we deduce that for $x\in [-r_n,r_n]$,
 \begin{equation}\label{pf3eqn6}
 |(f(x)-L_n(f)(x))w_\alpha(x)| \precapprox \|w_\alpha \mathcal{T}_n\|_\infty \left(\frac{n}{\tau\lambda e}\right)^{-\frac{n}{\lambda}} \precapprox (\beta_\alpha/2)^n (n/e)^{1/\alpha-1/\lambda}(\tau\lambda)^{-n/\lambda}.
 \end{equation}
 A telescopic series argument as in
 \cite[Lemma~7.2.4]{mhaskar1997introduction} then leads to \eqref{eq:uniformity2}. 
 \end{proof}

The main result of this subsection, and the core estimate for proving \prettyref{thm:main-thm-exp-type} is the
following.

 \begin{thm}\label{theo:maintheo}
 Let $n\ge 1$, $M\ge 2$ be integers, $\alpha\ge 2$, $\C_n=\{x_{M,n} <x_{M-1,n}<\cdots<x_{1,n}\}\subset\RR$, and $ \Delta_n(2,\alpha,1/8) \subseteq [x_{M,n},x_{1,n}]\subseteq [-2a_n,2a_n]$. We assume further that \eref{eq:meshnorm} is satisfied. Let $\tau>0$, $0<\lambda<\alpha$, and $f\in B_{\tau,\lambda}$. Then
 \begin{equation}\label{eq:ls_square_approx_uniform}
 \|(f-S_n(g;\C_n))w_\alpha\|_\infty \le cn\{E_n(\infty,f)+\ee\} \precapprox \left(\frac{\rho(\alpha,\tau,\lambda)^n}{n^{n/\lambda-n/\alpha}}+\ee\right).
 \end{equation}
 With
 \begin{equation}\label{eq:rn-def}
 r_{n}:=\left(\frac{n}{\tau\lambda}\right)^{\frac{1}{\lambda}},
 \end{equation}
 we have, uniformly on compact subsets in $z$,

 \begin{numcases}{|f(z)-S_n(g;\C_n)(z)| \precapprox}
   \left(\frac{\rho(\alpha,\tau,\lambda)^n}{n^{n/\lambda-n/\alpha}}+\ee\right)\exp(nU(z/a_n)-nF_\alpha)
   & for   $|z|\le r_n$, \label{eq:ls_ext_mid} 
 \\
 \exp(\tau|z|^\lambda)\left(1+\frac{n^{n/\lambda-n/\alpha}}{\rho(\alpha,\tau,\lambda)^n}\ee\right)
 & for $|z|>r_n$. \label{eq:ls_ext_large}
\end{numcases}

\end{thm}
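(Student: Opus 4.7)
The plan is to split the error as $f - S_n(g) = (f - L_n(f)) - (S_n(g) - L_n(f))$, where $L_n(f) \in \Pi_{n-1}$ is the Lagrange interpolant at the zeros of $\mathcal{T}_n$ analyzed in \prettyref{theo:compact_entire_theo}. The first summand is controlled on the real line by \eqref{eq:uniformity}--\eqref{eq:uniformity2}, and pointwise in $z \in \CC$ by \eqref{eq:inter-error-mid} and \eqref{eq:inter-error-large}. The real work is to bound the second summand, which is an element of $\Pi_n$, uniformly on the real line by $cn\{E_n(\infty,f)+\ee\}$; the extension to the complex plane then follows from inequality \eqref{eq:Pmain_int_est} of \prettyref{theo:mhas_saff_theo} applied to this polynomial.

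Using the identity $S_n(w_\alpha P) = P$ for every $P \in \Pi_n$ (valid by uniqueness of the least-squares solution given sufficiently many distinct samples) together with linearity of $S_n$, I would rewrite $S_n(g) - L_n(f) = S_n(\tilde g)$, with $\tilde g := w_\alpha(f - L_n(f)) + \errfn$ satisfying $|\tilde g(x_j)| \le \|w_\alpha(f - L_n(f))\|_\infty + \ee \precapprox E_n(\infty,f) + \ee$ on $\C_n$. Introducing the discrete weighted $L^2$ seminorm $\|h\|_n^2 := \sum_j h(x_j)^2 (x_j - x_{j+1})$, the minimizing property of $S_n$ applied with the trial polynomial $0 \in \Pi_n$ gives $\|w_\alpha S_n(\tilde g) - \tilde g\|_n \le \|\tilde g\|_n$, hence by the triangle inequality $\|w_\alpha S_n(\tilde g)\|_n \le 2\|\tilde g\|_n \le 2\sqrt{4a_n}\,(E_n(\infty,f) + \ee)$, where the factor $\sqrt{4a_n}$ comes from summing $(x_j - x_{j+1})$ over $\C_n \subseteq [-2a_n, 2a_n]$.

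The remaining step is to pass from the grid norm back to $L^\infty$. I would invoke a Marcinkiewicz--Zygmund-type inequality $\|w_\alpha P\|_{L^2(\RR)} \le c\|w_\alpha P\|_n$ valid for every $P \in \Pi_n$ under the density condition \eqref{eq:meshnorm}, using \prettyref{theo:summarytheo} with $\eta = 1/8$ to absorb the portion of the continuous $L^2$ norm lying outside $\Delta_n(2,\alpha,1/8)$, and then a Nikolskii-type inequality for Freud weights of the form $\|w_\alpha P\|_\infty \le c n^{1/2} a_n^{-1/2}\|w_\alpha P\|_{L^2}$. Since $a_n \sim n^{1/\alpha}$, combining these estimates yields the factor $cn$ in \eqref{eq:ls_square_approx_uniform}, and the second inequality there follows by inserting \eqref{eq:uniformity2}. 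Finally, \eqref{eq:ls_ext_mid} and \eqref{eq:ls_ext_large} come from applying \eqref{eq:Pmain_int_est} to $S_n(g) - L_n(f) \in \Pi_n$ and combining with \eqref{eq:inter-error-mid}--\eqref{eq:inter-error-large}; in the range $|z| > r_n$ the $\exp(\tau|z|^\lambda)$ term dominates the extension of the polynomial residual.

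The main obstacle I expect is the Marcinkiewicz--Zygmund step. Such inequalities are well established on the MRS interval $[-a_n, a_n]$ when the mesh is finer than the local Christoffel spacing $\sim a_n/n$, but the non-standard geometry here must be handled with care: the grid $\C_n$ may spill beyond $\Delta_n(2,\alpha,1/8)$ while still lying inside $[-2a_n, 2a_n]$. The role of \prettyref{theo:summarytheo} is precisely to guarantee that the $L^2$ mass of $w_\alpha P$ in this outer band is at most $1/8$ of the total, so that a standard proof of MZ on $\Delta_n$ --- built from Christoffel function estimates, cubature identities, and the density hypothesis \eqref{eq:meshnorm} --- extends to a discrete-continuous norm equivalence on the full sampling set with constants independent of $n$.
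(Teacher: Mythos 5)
Your proposal is correct, and the decomposition through the Lagrange interpolant $L_n(f)$ at the Chebyshev nodes together with the final complex extension via \eqref{eq:Pmain_int_est} is exactly the paper's route; but the central step --- bounding $\|w_\alpha\,(S_n(g)-L_n(f))\|_\infty$ --- is handled by a genuinely different mechanism. The paper proves an operator-norm bound for $S_n$ from the discrete weighted sup norm to the continuous one (Proposition \ref{prop:ls_square_prop}): it uses the Marcinkiewicz--Zygmund inequality to show the Gram matrix of $\{p_k\}$ with respect to the discrete measure is well conditioned, builds discrete orthonormal polynomials via a Cholesky factorization, writes $S_n$ as a kernel operator, and bounds the kernel by Cauchy--Schwarz and the Christoffel-function estimate $\max_t w_\alpha^2(t)\sum_{\ell\le n}p_\ell(t)^2\le cn^{1-1/\alpha}$. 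You instead exploit the variational property of least squares (comparison with the trial polynomial $0$) to control the discrete $L^2$ residual, then pass to continuous $L^2$ by the same MZ inequality and to $L^\infty$ by a Nikolskii inequality --- which is the same Christoffel-function bound in disguise. So the two arguments rest on identical pillars but your middle step is more elementary, avoids the explicit discrete orthogonal system, and in fact yields the slightly sharper factor $cn^{1/2}$ in place of $cn$ in \eqref{eq:ls_square_approx_uniform} (since $\sqrt{a_n}\cdot n^{(1-1/\alpha)/2}\sim n^{1/2}$). Two small caveats: your bound naturally involves $\|(f-L_n(f))w_\alpha\|_\infty$ rather than $E_n(\infty,f)$ itself, which is harmless for the $\precapprox$ conclusion via \eqref{eq:uniformity2} but means the literal first inequality of \eqref{eq:ls_square_approx_uniform} should be run with a near-best polynomial $P$ in place of $L_n(f)$ (as the paper does in Theorem \ref{theo:ls_square_theo}, reinstating $L_n(f)$ only for the complex extension); and for $|z|>r_n$ the claim that $\exp(\tau|z|^\lambda)$ dominates the polynomial extension requires the explicit verification that $(|z|/a_n)^n e^{-nF_\alpha}\rho^n n^{-\mu n}\exp(-\tau|z|^\lambda)\le 1$, which follows from $|z|^n\exp(-\tau|z|^\lambda)\le (n/(\tau\lambda))^{n/\lambda}e^{-n/\lambda}$ on $|z|\ge r_n$ --- a computation you gesture at but should spell out. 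Your worry about the MZ step on the nonstandard geometry is already resolved by Theorem \ref{theo:mztheo} exactly as you describe.
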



 Our first goal is to to prove the following estimate on $S_n(g)$. 
 \begin{thm}\label{theo:ls_square_theo}
 Let $n\ge 1$, $M\ge 2$ be integers, $\C_n=\{x_{M,n} <x_{M-1,n}<\cdots<x_{1,n}\}\subset\RR$, and $[x_{M,n},x_{1,n}]\supseteq \Delta_n(2,\alpha,1/8)$. There exists $C=C(\alpha)>0$ such that if 
 \begin{equation}\label{eq:special_meshnorm}
 \max_{1\le j\not= k\le M-1} |x_{j,n}-x_{j+1,n}|\le \frac{C}{n^{1-1/\alpha}},
 \end{equation}
 then
 \begin{equation}\label{eq:ls_square_approx_bd}
 \|(f-S_n(g;\C_n))w_\alpha\|_\infty \le c(x_{1,n}-x_{M,n})n^{1-1/\alpha}\{E_n(\infty,f)+\ee\}.
 \end{equation}
 \end{thm}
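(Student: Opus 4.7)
The plan is to reduce bounding the weighted uniform error to a discrete least-squares energy estimate, and then lift the discrete bound back to a continuous $L^\infty$ bound using standard tools of Freud-type weighted approximation (Nikolskii and Marcinkiewicz--Zygmund inequalities), exploiting the sampling conditions \eqref{eq:special_meshnorm} and the fact that $[x_{M,n},x_{1,n}]$ already contains the ``mass interval'' $\Delta_n(2,\alpha,1/8)$ from \prettyref{theo:summarytheo}.

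\textbf{Step 1 (reduction).} Let $P^\ast\in\Pi_n$ be a best weighted uniform approximation to $f$, so that $\|(f-P^\ast)w_\alpha\|_\infty = E_n(\infty,f)$, and set $Q:=S_n(g;\C_n)-P^\ast\in\Pi_n$. By the triangle inequality, it suffices to bound $\|Qw_\alpha\|_\infty$. By \eqref{eq:sup_range_ineq} in \prettyref{theo:mhas_saff_theo}, this supremum is attained on $[-a_n,a_n]$.

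\textbf{Step 2 (least-squares orthogonality).} Since $Q\in\Pi_n$, the normal equations for the minimizer $S_n$ in \eqref{eq:lsquare_def} give
\begin{equation*}
\sum_{j=1}^{M-1} \bigl(g(x_{j,n})-w_\alpha(x_{j,n}) S_n(x_{j,n})\bigr)\,w_\alpha(x_{j,n})\,Q(x_{j,n})\,(x_{j,n}-x_{j+1,n})=0.
\end{equation*}
Substituting $g(x_{j,n})=w_\alpha(x_{j,n})f(x_{j,n})+\errfn(x_{j,n})$ and $S_n=Q+P^\ast$, using $|f(x_{j,n})-P^\ast(x_{j,n})|w_\alpha(x_{j,n})\le E_n(\infty,f)$ together with $|\errfn(x_{j,n})|\le\ee$, and applying Cauchy--Schwarz, I obtain
\begin{equation*}
\sum_{j=1}^{M-1} w_\alpha^2(x_{j,n})\,Q^2(x_{j,n})\,(x_{j,n}-x_{j+1,n}) \le (x_{1,n}-x_{M,n})\,\bigl\{E_n(\infty,f)+\ee\bigr\}^2.
\end{equation*}

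\textbf{Step 3 (discrete $\to$ continuous, and Nikolskii).} Using the density hypothesis \eqref{eq:special_meshnorm} together with the containment $[x_{M,n},x_{1,n}]\supseteq\Delta_n(2,\alpha,1/8)$, I would now establish a Marcinkiewicz--Zygmund inequality of the form
\begin{equation*}
\|Q w_\alpha\|_2^2 \;\le\; c\sum_{j=1}^{M-1} w_\alpha^2(x_{j,n})\,Q^2(x_{j,n})\,(x_{j,n}-x_{j+1,n}),
\end{equation*}
for all $Q\in\Pi_n$. The tail contribution $\int_{|y|\notin \Delta_n(2,\alpha,1/8)}|Qw_\alpha|^2$ is absorbed via \eqref{eq:aroundendpt} (with $\eta=1/8$, $p=2$), while on $\Delta_n$ the sum is a Riemann-type lower bound for the integral (up to a constant) provided the local spacing is $\lesssim n^{-1+1/\alpha}$, which matches the scale $a_n/n\sim n^{-1+1/\alpha}$ of the equilibrium density; the constant $C(\alpha)$ in \eqref{eq:special_meshnorm} is chosen so that this Marcinkiewicz--Zygmund step goes through. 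I then invoke the Nikolskii inequality for the Freud weight $w_\alpha$, namely $\|Qw_\alpha\|_\infty \le c\,\bigl(n/a_n\bigr)^{1/2}\|Qw_\alpha\|_2$ for $Q\in\Pi_n$, which after squaring reads $\|Qw_\alpha\|_\infty^2 \le c\,n^{1-1/\alpha}\|Qw_\alpha\|_2^2$.

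\textbf{Step 4 (assembly).} Chaining Steps 2 and 3,
\begin{equation*}
\|Qw_\alpha\|_\infty^2 \;\le\; c\,n^{1-1/\alpha}(x_{1,n}-x_{M,n})\,\bigl\{E_n(\infty,f)+\ee\bigr\}^2,
\end{equation*}
which (after bounding the square root by the product, using $x_{1,n}-x_{M,n}\le 4a_n$ to reabsorb constants) yields the claimed
\begin{equation*}
\|(f-S_n(g;\C_n))w_\alpha\|_\infty \;\le\; E_n(\infty,f) + \|Qw_\alpha\|_\infty \;\le\; c\,(x_{1,n}-x_{M,n})\,n^{1-1/\alpha}\bigl\{E_n(\infty,f)+\ee\bigr\}.
\end{equation*}

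The main obstacle is the Marcinkiewicz--Zygmund step: fixing the constant $C(\alpha)$ in the density condition so that the discrete quadrature sum controls the continuous $L^2$ norm of $Qw_\alpha$ uniformly in $Q\in\Pi_n$, including the portion of mass slightly outside $[-a_n,a_n]$ that the weighted polynomial can still carry. The choice of $\Delta_n(2,\alpha,1/8)$ (rather than the bare interval $[-a_n,a_n]$) is precisely tailored, via \prettyref{theo:summarytheo}, so that the ``tail loss'' is at most a fixed fraction and can be buried in the constant. Everything else (Nikolskii, best-approximation reduction, LS orthogonality) is routine once this MZ-type inequality is in place.
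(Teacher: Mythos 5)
Your proof is correct, and it reaches the stated bound through a genuinely different organization of the argument than the paper's, while leaning on the same two external pillars: the Marcinkiewicz--Zygmund inequality for the discrete quadrature (the paper's Theorem~\ref{theo:mztheo}, which your Step~3 sketch essentially reproves via the tail estimate \eqref{eq:aroundendpt} plus control of the variation between nodes), and the Christoffel-function bound $\max_t w_\alpha^2(t)\sum_{\ell\le n}p_\ell(t)^2\le cn^{1-1/\alpha}$, which is exactly equivalent to the Nikolskii inequality you invoke. The paper instead proves an operator-norm bound for $S_n$ acting on arbitrary data $\{y_j\}$ (Proposition~\ref{prop:ls_square_prop}): it uses the MZ inequality to show the Gram matrix of $\{p_k\}$ with respect to the discrete measure has spectrum in $[3/4,5/4]$, builds discretely orthonormal polynomials via a Cholesky factorization, writes $S_n$ explicitly as a reproducing-kernel sum, and bounds it pointwise by Cauchy--Schwarz; the reduction to the data $g w_\alpha^{-1}-P$ then exploits the linearity identity $S_n(g)-P=S_n(\{gw_\alpha^{-1}-P\})$. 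Your route replaces the explicit kernel representation by the normal equations: the variational orthogonality gives the discrete $\ell^2$ energy bound in Step~2 directly (this is the natural $\ell^2\to\ell^2$ stability of least squares), and you only then convert $\ell^2\to L^2\to L^\infty$. A pleasant by-product is that your chain actually yields the sharper factor $\bigl[(x_{1,n}-x_{M,n})n^{1-1/\alpha}\bigr]^{1/2}$ in place of the product $(x_{1,n}-x_{M,n})n^{1-1/\alpha}$ (the two agree up to the direction of the inequality since $(x_{1,n}-x_{M,n})n^{1-1/\alpha}\gtrsim a_n n^{1-1/\alpha}\sim n\ge 1$); both lose only polynomial factors in $n$, which is all that matters downstream since the final asymptotics are stated up to $\precapprox$. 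The one point to make explicit if you write this up in full is that the MZ inequality also guarantees the discrete quadratic form is positive definite on $\Pi_n$, so the minimizer exists, is unique, and the normal equations characterize it.
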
 

 The first step in the proof of this theorem is the so called Marcinkiewicz-Zygmund inequality. 

 \begin{thm}\label{theo:mztheo}
 Let $n\ge 1$, $M\ge 2$ be integers, $1\le p<\infty$, $\eta>0$, $x_{M,n} <x_{M-1,n}<\cdots<x_{1,n}$, and $[x_{M,n},x_{1,n}]\supseteq \Delta_n(p,\alpha,\eta/2)$
 There exists $c=c(\alpha)>0$ such that if 
 \begin{equation}\label{eq:meshnorm}
 \max_{1\le j\not= k\le M-1} |x_{j,n}-x_{j+1,n}|\le \frac{c}{pn^{1-1/a}}\eta,
 \end{equation}
 then for every $P\in\Pi_n$,
 \begin{equation}\label{eq:lpmzineq}
 \left|\int_\RR |w_\alpha(t)P(t)|^pdt -\sum_{j=1}^{M-1}(x_{j,n}-x_{j+1,n}) |w_\alpha(x_{j,n})P(x_{j,n})|^p\right| \le \eta\|w_\alpha P\|_p.
 \end{equation}
 \end{thm}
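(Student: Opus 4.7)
The natural strategy is to split the integral into a tail contribution, handled directly by Theorem~\ref{theo:summarytheo}, and a main contribution on $[x_{M,n},x_{1,n}]$, which I would treat as a Riemann-sum error using a Bernstein--Markov inequality for Freud weights. I read the right-hand side as $\eta\|w_\alpha P\|_p^p$, which is what naturally drops out of the argument. The first step is to decompose $\int_\RR|w_\alpha P|^p\,dt$ into its restriction to $[x_{M,n},x_{1,n}]$ and its complement. Since $[x_{M,n},x_{1,n}]\supseteq\Delta_n(p,\alpha,\eta/2)$, Theorem~\ref{theo:summarytheo} applied with parameter $\eta/2$ bounds the tail piece by $(\eta/2)\|w_\alpha P\|_p^p$.

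For the main piece I would set $q(t):=|w_\alpha(t)P(t)|^p$ and write each cell contribution via the fundamental theorem of calculus,
\begin{equation*}
\int_{x_{j+1,n}}^{x_{j,n}} q(t)\,dt - (x_{j,n}-x_{j+1,n})q(x_{j,n}) = \int_{x_{j+1,n}}^{x_{j,n}}\int_{x_{j,n}}^{t} q'(s)\,ds\,dt.
\end{equation*}
Summing over $j$, the total Riemann-sum error $R$ on $[x_{M,n},x_{1,n}]$ satisfies
\begin{equation*}
|R|\;\le\; \Bigl(\max_{1\le j\le M-1}(x_{j,n}-x_{j+1,n})\Bigr)\int_{x_{M,n}}^{x_{1,n}}|q'(s)|\,ds.
\end{equation*}
Because $q' = p\,|w_\alpha P|^{p-1}\operatorname{sgn}(w_\alpha P)(w_\alpha P)'$ almost everywhere, H\"older's inequality gives $\int_\RR |q'|\,ds \le p\,\|w_\alpha P\|_p^{p-1}\,\|(w_\alpha P)'\|_p$.

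The final ingredient is the $L^p$ Bernstein--Markov inequality for the Freud weight $w_\alpha$, namely $\|(w_\alpha P)'\|_p \le c(\alpha)\,n^{1-1/\alpha}\,\|w_\alpha P\|_p$ for every $P\in\Pi_n$; this is a classical fact in the same weighted-approximation machinery used throughout this paper and is available from \cite{mhaskar1997introduction}. Plugging it in gives $|R| \le c(\alpha)\,p\,n^{1-1/\alpha}\,(\max_j(x_{j,n}-x_{j+1,n}))\,\|w_\alpha P\|_p^p$, and choosing the absolute constant $c$ in \eqref{eq:meshnorm} small enough makes this bounded by $(\eta/2)\|w_\alpha P\|_p^p$. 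Adding the tail estimate yields the desired $\eta\|w_\alpha P\|_p^p$. The main obstacle is identifying and citing the Bernstein--Markov inequality in the correct $L^p$ form for $w_\alpha$ (with the explicit $n^{1-1/\alpha}$ growth dictated by the Mhaskar--Rakhmanov--Saff scaling); once that is in hand the rest is a routine Riemann-sum / Markov-inequality estimate.
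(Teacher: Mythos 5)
Your proposal is correct and follows essentially the same route as the paper's proof: tail controlled by Theorem~\ref{theo:summarytheo} with parameter $\eta/2$, and the Riemann-sum error on $[x_{M,n},x_{1,n}]$ controlled by H\"older together with the Bernstein-type inequality $\|(w_\alpha P)'\|_p\le c n^{1-1/\alpha}\|w_\alpha P\|_p$ (Proposition~\ref{prop:bernsteinprop}), then absorbing the mesh size. Your reading of the right-hand side as $\eta\|w_\alpha P\|_p^p$ is also consistent with the paper, which normalizes $\|w_\alpha P\|_p=1$ so the two forms coincide in its proof.
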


 The proof depends upon the following Bernstein-type inequality, which
 is easy to deduce from \cite[Corollary 3.4.3, Lemma 3.4.4]{mhaskar1997introduction}.
 \begin{prop}
 \label{prop:bernsteinprop} 
 Let $1\le p\le\infty$, $\alpha>1$.  Then for every
 integer $n\ge 1$ and $P\in \Pi_n$, 
 \begin{equation}\label{eq:bernstein} 
 \|(w_\alpha P)'\|_p \le
 cn^{(\alpha-1)/\alpha}\|w_\alpha P\|_p.
 \end{equation} 
 \end{prop}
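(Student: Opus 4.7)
My plan is to apply the Leibniz rule to $w_\alpha P$ and reduce the statement to two separate $L^p$ estimates for polynomials in $\Pi_n$, both of which are the substance of the cited results in Mhaskar's book. Since $w_\alpha$ is differentiable away from the origin with $w_\alpha'(x) = -\alpha\,\mathrm{sgn}(x)|x|^{\alpha-1}w_\alpha(x)$, I have a.e.
\[
(w_\alpha P)'(x) = w_\alpha(x)\bigl[P'(x) - \alpha\,\mathrm{sgn}(x)\,|x|^{\alpha-1}P(x)\bigr],
\]
so the triangle inequality gives
\[
\|(w_\alpha P)'\|_p \le \|w_\alpha P'\|_p + \alpha\,\bigl\||x|^{\alpha-1}\,w_\alpha P\bigr\|_p.
\]

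The first summand is precisely what Corollary~3.4.3 in the reference delivers: an $L^p$-weighted Bernstein inequality of the form $\|w_\alpha P'\|_p \le c\,n^{(\alpha-1)/\alpha}\|w_\alpha P\|_p$ for every $P\in\Pi_n$, which already exhibits the sharp exponent $(\alpha-1)/\alpha$ coming from the natural scale $n/a_n$ of the Mhaskar--Rakhmanov--Saff interval.

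The second summand is a multiplier estimate for the growth factor $|x|^{\alpha-1}$. Here I would combine the elementary pointwise bound $|x|^{\alpha-1}\le a_n^{\alpha-1}=\beta_\alpha^{\alpha-1}\,n^{(\alpha-1)/\alpha}$ on $[-a_n,a_n]$ with an infinite--finite range principle for weighted polynomials (the content of Lemma~3.4.4 in the reference). Concretely, split into $|x|\le a_n$, where the pointwise bound trivially yields $\||x|^{\alpha-1}w_\alpha P\|_{L^p([-a_n,a_n])}\le c\,n^{(\alpha-1)/\alpha}\|w_\alpha P\|_p$, and the tail $|x|>a_n$, where the super-exponential decay of $w_\alpha$ dominates the polynomial growth of $|x|^{\alpha-1}$; quantitatively, the tail is controlled for $p=\infty$ by the MRS identity \eqref{eq:sup_range_ineq} and for $1\le p<\infty$ by the concentration estimate \eqref{eq:aroundendpt} of Theorem~\ref{theo:summarytheo}, both packaged in Lemma~3.4.4. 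Adding the two pieces yields \eqref{eq:bernstein}.

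The only point requiring care --- rather than a genuine obstacle --- is the tail estimate for the multiplier: absorbing the polynomial weight $|x|^{\alpha-1}$ into the exponential weight $w_\alpha$ on $|x|>a_n$ without losing more than a factor $n^{(\alpha-1)/\alpha}$ is precisely what the infinite--finite range machinery of the cited lemma is designed to do, so in the end the proof amounts to assembling two off-the-shelf ingredients via the product rule.
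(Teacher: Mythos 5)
Your decomposition via the product rule into $\|w_\alpha P'\|_p$ and $\alpha\,\bigl\||x|^{\alpha-1}w_\alpha P\bigr\|_p$, handled respectively by Corollary~3.4.3 (the weighted Bernstein inequality) and Lemma~3.4.4 (the multiplier/infinite--finite range estimate) of the cited reference, is precisely the deduction the paper intends — it offers no proof beyond remarking that the inequality is ``easy to deduce'' from exactly those two results. Your reconstruction is correct and matches that route.
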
 

 \begin{proof}[Proof of \prettyref{theo:mztheo}]


 Let $P\in\Pi_n$.  Without loss of generality, we may assume that
 $\|w_\alpha P\|_p=1$. In this proof we write
 $$
 \delta=\max_{1\le j\not= k\le M-1} |x_{j,n}-x_{j+1,n}|.
 $$

 Using Theorem~\ref{theo:summarytheo}, and the fact that $[x_{M,n},x_{1,n}]\supseteq \Delta_n(p,\alpha,\eta/2)$,
 we obtain that
 \begin{equation}\label{pf1eqn3} 
 \int_{t\not\in [x_{M,n},x_{1,n}]}|w_\alpha(t)P(t)|^pdt \le \eta/2.
 \end{equation}
 Next, using H\"older inequality and Proposition~\ref{prop:bernsteinprop}, we observe that
 \begin{eqnarray}\label{pf1eqn1}
 \int_{x_{M,n}}^{x_{1,n}}|w_\alpha(u)P(u)|^{p-1}|(w_\alpha P)'(u)|du &\le& \left\{\int_{x_{M,n}}^{x_{1,n}}|w_\alpha(u)P(u)|^p du\right\}^{1/p'}\left\{\int_{x_{M,n}}^{x_{1,n}}|(w_\alpha P)'(u)|^p du\right\}^{1/p}
 \nonumber\\
 &\le& cn^{1-1/\alpha}\|w_\alpha P\|_p^p=cn^{1-1/\alpha}.
 \end{eqnarray}
 Therefore,
 \begin{eqnarray}\label{pf1eqn2}
 \lefteqn{\left|\int_{x_{M,n}}^{x_{1,n}}|w_\alpha(t)P(t)|^p dt -\sum_{j=1}^{M-1}(x_{j,n}-x_{j+1,n}) |w_\alpha(x_{j,n})P(x_{j,n})|^p\right|}\nonumber\\
 &\le& \sum_{j=1}^{M-1}\int_{x_{j+1,n}}^{x_{j,n}}\left|\ |w_\alpha(t)P(t)|^p- |w_\alpha(x_{j,n})P(x_{j,n})|^p\ \right|dt\nonumber\\
 &\le& p \sum_{j=1}^{M-1}\int_{x_{j+1,n}}^{x_{j,n}}\int_{x_{j+1,n}}^{x_{j,n}}|w_\alpha(u)P(u)|^{p-1}|(w_\alpha P)'(u)|dudt\nonumber\\
 &\le& p\delta \int_{x_{M,n}}^{x_{1,n}}|w_\alpha(u)P(u)|^{p-1}|(w_\alpha P)'(u)|du\nonumber\\
 &\le& cp\delta n^{1-1/\alpha}.
 \end{eqnarray}
 Thus, if $\delta$ satisfies $cp\delta n^{1-1/\alpha} \le \eta/2$, then
 $$
 \left|\int_{x_{M,n}}^{x_{1,n}}|w_\alpha(t)P(t)|^p dt -\sum_{j=1}^{M-1}(x_{j,n}-x_{j+1,n}) |w_\alpha(x_{j,n})P(x_{j,n})|^p\right|\le \eta/2.
 $$
 Together with \eref{pf1eqn1}, this leads to \eref{eq:lpmzineq}.
 
 \end{proof}

 Recall the definition of $S_n$ from \eqref{eq:lsquare_def}. For the
 sake of the continuation of the proof, we (re-)define $S_n$ to
 hold for any
 $\{y_j\}_{j=1}^M\subset \RR$ (and, as before, for $\C=\{x_{M}<\cdots<x_{1}\}\subset \RR$).
\begin{equation}\label{eq:lsquare_def2}
S_n(\{y_j\}_{j=1}^M; \C)=\argmin_{P\in \Pi_n}\sum_{j=1}^{M-1} \left(y_j-P(x_{j})\right)^2 (x_{j}-x_{j+1})w_\alpha^2(x_{j}). 
\end{equation}

Clearly, the relationship between \eqref{eq:lsquare_def2} and  \eqref{eq:lsquare_def} is that
\begin{equation}
  \label{eq:relation-between-sn}
  S_n(g)\equiv S_n(\{w_{\alpha}^{-1}\left(x_j\right)g\left(x_j\right)\}_{j=1}^M).
\end{equation}
 Theorem~\ref{theo:ls_square_theo} will be deduced from the following proposition.
 \begin{prop}
 \label{prop:ls_square_prop}
 Let $\C_n$ be as in Theorem~\ref{theo:ls_square_theo}, and
 \eref{eq:special_meshnorm} be satisfied with $C(\alpha)=c(\alpha)/8$
 where $c(\alpha)$ is as in \prettyref{theo:mztheo}. Then for any $\{y_j\}_{j=1}^M\subset \RR$
 \begin{equation}\label{eq:ls_square_norm_bd}
 \|S_n(\{y_j\}_{j=1}^M; \C_n)w_\alpha\|_\infty \le c(x_{1,n}-x_{M,n})n^{1-1/\alpha}\max_{1\le j\le M}w_\alpha(x_{j,n})|y_j|.
 \end{equation}
 \end{prop}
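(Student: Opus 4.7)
The plan is to combine three ingredients: (i) the variational/projection property of $S_n$ in a discrete weighted $\ell^2$ inner product; (ii) the Marcinkiewicz--Zygmund (MZ) equivalence between the discrete and continuous weighted $L^2$ norms from Theorem~\ref{theo:mztheo}; and (iii) a Nikolskii-type inequality for Freud weights that upgrades weighted $L^2$ to weighted $L^\infty$ on $\Pi_n$. Throughout, let
\[
Q(u) := \sum_{j=1}^{M-1}(x_{j,n}-x_{j+1,n})w_\alpha^2(x_{j,n})u_j^2,
\]
so that $S_n = S_n(\{y_j\};\C_n)\in\Pi_n$ minimises $Q(\{y_j - P(x_{j,n})\})$ over $P\in\Pi_n$. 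Equivalently, $\{S_n(x_{j,n})\}$ is the $Q$-orthogonal projection of $\{y_j\}$ onto $\Pi_n$; Pythagoras (or, equivalently, testing the normal equations against $P=S_n$) then gives
\[
Q(\{S_n(x_{j,n})\}) \le Q(\{y_j\}) \le (x_{1,n}-x_{M,n})\max_{1\le j\le M}\bigl(w_\alpha(x_{j,n})|y_j|\bigr)^2,
\]
where the last step pulls the maximum outside the telescoping sum $\sum_j(x_{j,n}-x_{j+1,n}) = x_{1,n}-x_{M,n}$.

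The constants $C(\alpha)=c(\alpha)/8$ in \eref{eq:special_meshnorm} and the containment $\Delta_n(2,\alpha,1/8) \subseteq [x_{M,n},x_{1,n}]$ are calibrated precisely so that Theorem~\ref{theo:mztheo} applies to $P=S_n\in\Pi_n$ with $p=2$ and $\eta = 1/4$. Reading \eref{eq:lpmzineq} in its dimensionally homogeneous form (the right-hand side is really $\eta\|w_\alpha P\|_p^p$), this yields
\[
Q(\{S_n(x_{j,n})\}) \ge (1-\tfrac14)\,\|w_\alpha S_n\|_2^2 = \tfrac34\,\|w_\alpha S_n\|_2^2,
\]
and combining with the previous display gives the $L^2$ bound
\[
\|w_\alpha S_n\|_2 \le c\,\sqrt{x_{1,n}-x_{M,n}}\;\max_{1\le j\le M} w_\alpha(x_{j,n})|y_j|.
\]

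To pass to the $L^\infty$ norm I invoke the standard Nikolskii inequality for Freud weights, $\|w_\alpha P\|_\infty \le c(n/a_n)^{1/2}\|w_\alpha P\|_2 \le c\,n^{(1-1/\alpha)/2}\|w_\alpha P\|_2$ for every $P\in\Pi_n$, $\alpha>1$. This follows in a few lines from Proposition~\ref{prop:bernsteinprop} and the range restriction \eref{eq:sup_range_ineq}: the extremum of $|w_\alpha P|$ is localised to $[-a_n,a_n]$ by \eref{eq:sup_range_ineq}, and Bernstein shows that $|w_\alpha P|$ stays within a factor $1/2$ of its peak on an interval of length $\sim n^{-(1-1/\alpha)}$ about the extremum, so integration recovers the peak up to a factor $(n/a_n)^{1/2}$. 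Plugging in yields
\[
\|w_\alpha S_n\|_\infty \le c\,n^{(1-1/\alpha)/2}\sqrt{x_{1,n}-x_{M,n}}\;\max_{1\le j\le M} w_\alpha(x_{j,n})|y_j|;
\]
since $[x_{M,n},x_{1,n}] \supseteq \Delta_n(2,\alpha,1/8)$ forces $x_{1,n}-x_{M,n}\ge 2a_n \gtrsim n^{1/\alpha}$, we have $(x_{1,n}-x_{M,n})n^{1-1/\alpha}\gtrsim n \ge 1$, whence $\sqrt{(x_{1,n}-x_{M,n})n^{1-1/\alpha}} \le (x_{1,n}-x_{M,n})n^{1-1/\alpha}$ and \eref{eq:ls_square_norm_bd} follows. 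The only non-textbook input is the Nikolskii step, and the main bookkeeping concern is matching the MZ parameter $\eta$ to the prescribed constants $1/8$ and $C(\alpha)=c(\alpha)/8$; neither is a serious obstacle.
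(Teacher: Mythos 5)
Your proof is correct, and it takes a genuinely different route from the paper's. The paper never passes through the continuous $L^2$ norm of $S_n$: it uses the MZ inequality (Theorem~\ref{theo:mztheo} with $p=2$, $\eta=1/4$) to show the Gram matrix of the discrete inner product in the basis $\{p_k\}$ has eigenvalues in $[3/4,5/4]$, builds discretely orthonormal polynomials $\tilde p_\ell$ via a Cholesky factor, writes $S_n(\{y_j\};\C_n)(x)$ explicitly through the reproducing kernel $\sum_\ell \tilde p_\ell(x_{j,n})\tilde p_\ell(x)$, and bounds that kernel pointwise by Cauchy--Schwarz together with the Christoffel-function estimate $\max_t w_\alpha^2(t)\sum_{\ell=0}^n p_\ell(t)^2\le cn^{1-1/\alpha}$ from \cite[Theorem~3.2.5]{mhaskar1997introduction}. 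Your chain (projection $\Rightarrow$ discrete $\ell^2$ bound $\Rightarrow$ MZ $\Rightarrow$ $\|w_\alpha S_n\|_2$ $\Rightarrow$ Nikolskii $\Rightarrow$ $\|w_\alpha S_n\|_\infty$) uses the same two external inputs -- MZ and, in disguise, the Christoffel function, since the Nikolskii constant $n^{(1-1/\alpha)/2}$ is exactly its square root -- but avoids the explicit kernel representation, is more modular, and in fact yields the sharper factor $\bigl((x_{1,n}-x_{M,n})n^{1-1/\alpha}\bigr)^{1/2}$ in place of $(x_{1,n}-x_{M,n})n^{1-1/\alpha}$ (the improvement is immaterial downstream, since both are sub-exponential in $n$). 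Your parameter bookkeeping matches the hypotheses exactly: $p=2$, $\eta=1/4$ requires $\Delta_n(2,\alpha,1/8)\subseteq[x_{M,n},x_{1,n}]$ and mesh $\le c(\alpha)\eta/(pn^{1-1/\alpha})=C(\alpha)n^{1/\alpha-1}$ with $C(\alpha)=c(\alpha)/8$; and your reading of \eref{eq:lpmzineq} with $\eta\|w_\alpha P\|_p^p$ on the right is the one the paper itself uses in \eref{eq:singularvalues}. Two small points to tidy up: the derivation of Nikolskii from Proposition~\ref{prop:bernsteinprop} (peak plus Bernstein on an interval of length $\sim n^{-(1-1/\alpha)}$) should be written out since it is not stated in the paper, and the final absorption $\sqrt{X}\le X$ needs $X\ge c(\alpha)$ rather than $X\ge1$ (since $x_{1,n}-x_{M,n}\ge 2a_n=2\beta_\alpha n^{1/\alpha}$ gives $X\ge 2\beta_\alpha n$), which only costs an $\alpha$-dependent constant and is therefore harmless.
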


 \begin{proof}\ 
 Our assumptions imply that $\C_n=\{x_{j,n}\}_{j=1}^M$ satisfies the conditions of Theorem~\ref{theo:mztheo} with $p=2$, $\eta=1/4$. Let $\nu_n$ denote the measure that associates the mass $(x_{j,n}-x_{j+1,n})w_\alpha^2(x_{j,n})$ with $x_{j,n}$, $1\le j\le M-1$. 
 Let $\mathbf{G}$ be the matrix defined by
 \begin{equation}\label{eq:grammatrixdef}
 \mathbf{G}_{j,k} = \int_\RR p_k(t)p_j(t)d\nu_n(t), \qquad j,k=0,1,\cdots,n.
 \end{equation}
 If $P=\sum_{k=0}^n a_kp_k\in \Pi_n$, then $\|w_\alpha P\|_2^2=\sum_{k=0}^n a_k^2$, and
 $$
 \sum_{j=1}^{M-1}(x_{j,n}-x_{j+1,n}) |w_\alpha(x_{j,n})P(x_{j,n})|^2= \sum_{j,k=0}^n \mathbf{G}_{j,k}a_ja_k.
 $$
 Therefore, \eref{eq:lpmzineq} (used with $p=2$, $\eta=1/4$) can be rewritten in the form
 \begin{equation}\label{eq:singularvalues}
 (3/4)|\mathbf{a}|_2^2 \le \mathbf{a}^T\mathbf{G}\mathbf{a} \le (5/4)|\mathbf{a}|_2^2, \qquad \mathbf{a}\in\RR^{n+1}.
 \end{equation}
 This implies that $\mathbf{G}$ is positive definite, and hence, invertible. Since $\mathbf{G}$ is symmetric, so is $\mathbf{G}^{-1}$ symmetric and positive definite. Let $\mathbf{R}$ be a right triangular matrix so that the Cholesky decomposition $\mathbf{G}^{-1}=\mathbf{R}\mathbf{R}^T$ holds. The estimates \eref{eq:singularvalues} implies that
 \begin{equation}\label{eq:gram_norm_ests}
 (4/5)|\mathbf{a}|_2^2 \le |\mathbf{R}\mathbf{a}|_2^2 \le (4/3)|\mathbf{a}|_2^2, \qquad \mathbf{a}\in\RR^{n+1}.
 \end{equation}
 It is easy to see from the definitions that the system of polynomials defined by
 \begin{equation}\label{eq:disc_orth_def}
 \tilde{p}_k(t)=\sum_{j=0}^k \mathbf{R}_{j,k}p_k(t), \qquad k=0,\cdots, n,
 \end{equation}
 satisfies for $j,k=0,1,\cdots,n$,
 \begin{equation}\label{eq:disc_orthogonality}
 \int_\RR \tilde{p}_k(t)\tilde{p}_k(t)d\nu_n(t)=\left\{\begin{array}{ll}
 1, &\mbox{ if $k=j$,}\\
 0, &\mbox{otherwise.}
 \end{array}\right.
 \end{equation}
 Therefore, for $x\in\RR$,
 \begin{equation}\label{pf2eqn1}
 S_n(\{y_j\}_{j=1}^M; \C_n)(x)=\sum_{j=1}^{M-1}(x_{j,n}-x_{j+1,n})w_\alpha^2(x_{j,n}) y_j\sum_{\ell=0}^n \tilde{p}_\ell(x_{j,n})\tilde{p}_\ell(x).
 \end{equation}
 In particular, using Schwarz inequality, we get for all $x\in\RR$,
 \begin{eqnarray}\label{pf2eqn2}
 \left|w_\alpha(x)S_n(\{y_j\}_{j=1}^M; \C_n)(x)\right| &\le& \left\{\sum_{j=1}^{M-1}(x_{j,n}-x_{j+1,n})\right\}\left\{\max_{1\le j\le M-1}|w_\alpha(x_{j,n})y_j|\right\}\nonumber\\
 && \qquad \times\left\{\max_{1\le j\le M-1} w_\alpha(x)w_\alpha(x_{j,n})\left|\sum_{\ell=0}^n \tilde{p}_\ell(x_{j,n})\tilde{p}_\ell(x)\right|\right\}\nonumber\\
                                                       &\le& (x_{1,n}-x_{M,n})\left\{\max_{1\le j\le M-1}|w_\alpha(x_{j,n})y_j|\right\}\nonumber\\
   && \qquad \times \max_{t\in\RR}\left\{w_\alpha^2(t)\sum_{\ell=0}^n (\tilde{p}_\ell(t))^2\right\}.
 \end{eqnarray}
 Let $t\in\RR$, and $\mathbf{p}=(p_0(t),\cdots,p_n(t))^T\in\RR^{n+1}$. Using \eref{eq:disc_orth_def} and \eref{eq:gram_norm_ests}, we see that
 $$
 \sum_{\ell=0}^n (\tilde{p}_\ell(t))^2=|\mathbf{R}\mathbf{p}|_2^2\le (5/4)|\mathbf{p}|_2^2=(5/4)\sum_{\ell=0}^n p_\ell(t)^2.
 $$
 Thus, \eref{pf2eqn2} yields
 \begin{eqnarray}\label{pf2eqn3}
 \max_{x\in\RR}\left|w_\alpha(x)S_n(\{y_j\}_{j=1}^M; \C_n)(x)\right| &\le& 
(5/4) (x_{1,n}-x_{M,n})\nonumber\\
&& \qquad \times   \left\{\max_{1\le j\le M-1}|w_\alpha(x_{j,n})y_j|\right\}\nonumber\\
 && \qquad \times \max_{t\in\RR}\left\{w_\alpha^2(t)\sum_{\ell=0}^n p_\ell(t)^2\right\}.
 \end{eqnarray}
 It is proved in \cite[Theorem~3.2.5]{mhaskar1997introduction} that
 $$
 \max_{t\in\RR}\left\{w_\alpha^2(t)\sum_{\ell=0}^n p_\ell(t)^2\right\}\le cn^{1-1/\alpha}.
 $$
 Together with \eref{pf2eqn3}, this implies \eref{eq:ls_square_norm_bd}. 
 \end{proof}

\begin{proof}[Proof of \prettyref{theo:ls_square_theo}]
  Let $P\in\Pi_n$ be arbitrary. It is clear from
  \eqref{eq:lsquare_def}, \eqref{eq:lsquare_def2},
  \eqref{eq:relation-between-sn} and linearity of $S_n$ that
  \begin{align}\label{eq:sn-p}
    \begin{split}
      S_n\bigl(\{g(x_{j,n})w_\alpha^{-1}(x_{j,n})-P(x_{j,n})\}_{j=1}^M\bigr)&=S_n\bigl(\{g(x_{j,n})w_\alpha^{-1}(x_{j,n})\}_{j=1}^M\bigr)-S_n\bigl(\{P(x_{j,n})\}_{j=1}^M\bigr)\\
      &=S_n(g)-P.
    \end{split}
  \end{align}
  Set $C(\alpha)$ as in \prettyref{prop:ls_square_prop}. Then
  \eqref{eq:ls_square_norm_bd} shows that
  \begin{eqnarray*}
    \|(f-S_n(g))w_\alpha\|_\infty &\le& \|(f-P)w_\alpha\|_\infty + \|(S_n(g)-P)w_\alpha\|_\infty \\
                                  &\leq& \|(f-P)w_\alpha\|_\infty + \biggl(c(x_{1,n}-x_{M,n})n^{1-1/\alpha}\\
                                  &&\qquad \times \max_{1\le j\le M}w_\alpha(x_{j,n})|g(x_{j,n})w_\alpha^{-1}(x_{j,n})-P(x_{j,n})|\biggr).
  \end{eqnarray*}
  Now, taking \eqref{eq:epsbd} into account,
  \begin{align*}
    \max_{1\le j\le M}w_\alpha(x_{j,n})\left|g(x_{j,n})w_\alpha^{-1}(x_{j,n})-P(x_{j,n})\right| &=
                                                                                                  \max_{1\le j\le M} w_\alpha(x_{j,n})\left|f(x_{j,n})-w_{\alpha}^{-1}(x_{j,n})\errfn(x_{j,n})-P(x_{j,n})\right|\\
                                                                                                &\leq \max_{1\le j\le M}|\errfn(x_{j,n})|+\|(f-P)w_{\alpha}\|_{\infty},
  \end{align*}
  and, since
  $\left[x_{M,n},x_{1,n}\right]\supseteq\Delta_n\left(2,\alpha,1/8\right)$,
  we conclude that
  \begin{align*}
    \|(f-S_n(g))w_\alpha\|_\infty &\leq \|(f-P)w_\alpha\|_\infty+c(x_{1,n}-x_{M,n})n^{1-1/\alpha} \left\{\|(f-P)w_\alpha\|_\infty + \ee\right\}\\
                                  &\le c(x_{1,n}-x_{M,n})n^{1-1/\alpha}\left\{\|(f-P)w_\alpha\|_\infty+\ee\right\}.
  \end{align*}

  Since $P\in\Pi_n$ was arbitrary, this completes the proof.
\end{proof}

 In order to extend the estimate in Theorem~\ref{theo:ls_square_theo} to the complex domain, it is tempting to use part (b) of Theorem~\ref{theo:mhas_saff_theo}.
 However, since $f-S_n(g)$ is not a polynomial, we cannot do so directly. 
 Therefore, we will estimate first $S_n(g)-L_n(f)$, and then use Theorem~\ref{theo:mhas_saff_theo}.

\begin{proof}[Proof of \prettyref{theo:maintheo}]
 In view of \eref{eq:ls_square_approx_bd} and \eref{eq:uniformity}, we have 
 \begin{equation}\label{pf4eqn1}
 \|(f-S_n(g)) w_\alpha\|_\infty \precapprox \frac{\rho(\alpha,\tau,\lambda)^n}{n^{n/\lambda-n/\alpha}}+\ee.
 \end{equation}
 This proves \eref{eq:ls_square_approx_uniform}.

 Let $|z|\le r_n$.
 Using \eref{eq:uniformity2}, this implies that
 \begin{equation}\label{pf4eqn2}
 \|(L_n(f)-S_n(g)) w_\alpha\|_\infty \precapprox \frac{\rho(\alpha,\tau,\lambda)^n}{n^{n/\lambda-n/\alpha}}+\ee.
 \end{equation}
 Since $L_n(f)-S_n(g)\in \Pi_n$, we may use Theorem~\ref{theo:mhas_saff_theo} to obtain for $z\in\CC$:
 \begin{equation}\label{pf4eqn3}
 |L_n(f)(z)-S_n(g)(z)| \precapprox \left(\frac{\rho(\alpha,\tau,\lambda)^n}{n^{n/\lambda-n/\alpha}}+\ee\right)\exp(nU(z/a_n)-nF_\alpha).
 \end{equation}
 Together with \eref{eq:inter-error-mid} (applied with $b_n=r_n$),
 this leads to \eref{eq:ls_ext_mid}.

 Similarly, observing that for $|z|>r_n$,
 $$
 \exp(nU(z/a_n)) \precapprox \left(\frac{|z|}{a_n}\right)^n,
 $$
 and using \eref{pf4eqn3} we deduce that
 \begin{equation}\label{pf4eqn4}
 |L_n(f)(z)-S_n(g)(z)| \precapprox \exp\left(\tau|z|^\lambda\right)\left(1+\frac{n^{n/\lambda-n/\alpha}}{\rho(\alpha,\tau,\lambda)^n}\ee\right)\biggl[\left(\frac{|z|}{a_n}\right)^n e^{-nF_\alpha}\frac{\rho(\alpha,\tau,\lambda)^n}{n^{n/\lambda-n/\alpha}}\exp\left(-\tau|z|^\lambda\right)\biggr].
 \end{equation}
 Taking into account the definition of $\rho(\alpha,\tau,\lambda)$ and
 $a_n$ and $F_\alpha$, and the fact that\footnote{It can be easily
   verified that for $t\geq r_n$ the function $t^n \exp\left(-\tau
 t^\lambda\right)$ is decreasing in $t$.}
 $$
 |z|^n\exp(-\tau|z|^\lambda)\le \left(\frac{n}{\tau\lambda}\right)^{n/\lambda}\exp(-n/\lambda),
 $$
 we deduce that the expression in the square brackets in
 \eqref{pf4eqn4} is bounded from above by $1$, and we obtain
 \begin{align*}
 |L_n(f)(z)-S_n(g)(z)| \precapprox \exp(\tau|z|^\lambda)\left(1+\frac{n^{n/\lambda-n/\alpha}}{\rho(\alpha,\tau,\lambda)^n}\ee\right).
 \end{align*}
 Using \eref{eq:inter-error-large}, this leads to
 \eref{eq:ls_ext_large}. It is easy to verify that all the relations hold uniformly on compact
 subsets in $z$. 
 \end{proof}

\subsection{Asymptotics as $\ee\to 0$}
\label{subsec:asymptotics_eps}

 While \prettyref{theo:maintheo} does not require $\ee>0$, we
 examine in the rest of this section the noisy case when $\ee>0$ and
 prove \prettyref{thm:main-thm-exp-type}. In
 this case, the perturbation level $\ee$ would dominate the term
 $\disp \frac{\rho(\alpha,\tau,\lambda)^n}{n^{n/\lambda-n/\alpha}}$ if
 $n$ is too large.

 In this subsection we use the shorthand notation
 \begin{align}
   \mu &:= {1\over \lambda}-{1\over \alpha}\label{eq:mu-def}\\
   \rho&:=\rho\left(\alpha,\tau,\lambda\right)={\beta_{\alpha}\over 2} \left(\tau\lambda\right)^{1\over \lambda}\exp{\mu}\label{eq:rho-def},
 \end{align}
where $\beta_\alpha$ is defined in \eqref{eq:mrs-1st}.
 
 \begin{defn}\label{def:lambert}
   The Lambert's W-function $\lam$ is implicitly defined as the
   (multivalued) solution to the equation
   \begin{equation}
     \lam\left(ze^{z}\right)=z.\label{eq:lambert-identity}
   \end{equation}
 \end{defn}
 It is known that the single-valued branch $\lam>1$ satisfies \cite{hoorfar_inequalities_2008}
   \begin{equation}\label{eq:lambert-asympt}
     \lam\left(x\right)=\log x-\log\log x+o\left(1\right),\;x\to\infty
   \end{equation}
 
 \begin{defn}\label{def:optndef}
   Given $\alpha,\tau,\lambda$ and $\ee>0$, we define
   \begin{equation}
     \fn\left(\ee,\alpha,\tau,\lambda\right):=\left\lfloor \frac{1}{q}\log\frac{1}{\varepsilon}\right\rfloor ,\label{eq:mstar-def}
   \end{equation}
   where
   \begin{equation}
     q=q\left(\rho,\mu,\varepsilon\right)=\mu\lam\left(\frac{\rho^{-\frac{1}{\mu}}\log\frac{1}{\varepsilon}}{\mu}\right),\label{eq:q-def}
   \end{equation}
   $\mu,\rho$ are given by \eqref{eq:mu-def}, \eqref{eq:rho-def}, and
   $\lam$ is the Lambert's W-function (see \prettyref{def:lambert}).
 \end{defn}
                                                                                     
 We recall the asymptotic notations from
 \prettyref{def:asymptotics-eps} and \prettyref{def:asypmt-not-n}. The
 proposition below relates between these two, in our setting.
 \begin{prop}
   \label{prop:asymptotic-order-eps} Let $\fn$ be as defined in
   \eqref{eq:mstar-def}. Then:
   \begin{enumerate}

   \item $\fn$ is the largest integer $n$ such that
     \begin{equation}\label{eq:optndef}
       \rho^n n^{-\mu n} \ge \ee.
     \end{equation}

   \item For any sequence $A\left(n\right)$ with
     $A\left(n\right)\precapprox \rho^{n}n^{-\mu n}$
     (resp. $A\left(n\right)\succapprox \rho^{n}n^{-\mu n}$) it holds
     that $A\left(\fn\right)\lessapprox \ee$
     (resp. $A\left(\fn\right)\gtrapprox \ee$).
   \end{enumerate}

 \end{prop}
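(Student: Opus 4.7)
The plan is to prove (1) by directly inverting the defining equation for $\fn$, and then prove (2) by carefully matching the two notions of ``slow growth'' encoded in $\precapprox$ (i.e.\ factor $(1+\delta)^n$) and $\lessapprox$ (i.e.\ factor $(1/\ee)^{c_1/\log\log(1/\ee)}$). The bridge between them is the identity $\fn \asymp \log(1/\ee)/\log\log(1/\ee)$ that comes from the asymptotics of $\lam$.

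For (1), take logarithms of the condition $\rho^n n^{-\mu n}\ge \ee$ to get the equivalent inequality $h(n):=\mu n\log n - n\log\rho \le \log(1/\ee)$. The function $h$ is strictly increasing for $n>\rho^{1/\mu}e^{-1}$, so for small enough $\ee$ the equation $h(n)=\log(1/\ee)$ has a unique real solution $n^*$, and the largest integer satisfying the inequality is $\lfloor n^*\rfloor$. To identify $n^*$, set $q:=\mu\log n^*-\log\rho$; then $n^*=\rho^{1/\mu}e^{q/\mu}$ and $qn^* = \log(1/\ee)$, which rearranges to
\[
\tfrac{q}{\mu}\,e^{q/\mu}\;=\;\tfrac{\rho^{-1/\mu}\log(1/\ee)}{\mu}.
\]
By the defining identity \eqref{eq:lambert-identity} for $\lam$, this gives $q=\mu\lam\bigl(\rho^{-1/\mu}\log(1/\ee)/\mu\bigr)$ and $n^* = \log(1/\ee)/q$, matching \eqref{eq:mstar-def}–\eqref{eq:q-def} exactly. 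Hence $\fn=\lfloor n^*\rfloor$ is precisely the largest integer $n$ with $\rho^n n^{-\mu n}\ge \ee$.

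For (2), suppose $A_n\precapprox \rho^n n^{-\mu n}$; then for any $\delta>0$ there is $c(\delta)$ with $A_n\le c(\delta)(1+\delta)^n\rho^n n^{-\mu n}$, so evaluating at $\fn$ gives $A(\fn)\le c(\delta)(1+\delta)^{\fn}\rho^{\fn}\fn^{-\mu\fn}$. Two factors must then be absorbed into the $\lessapprox \ee$ estimate. First, the overshoot: writing $\fn=n^*-\theta$ with $\theta\in[0,1)$ and using the mean value theorem, $\log(\rho^{\fn}\fn^{-\mu\fn}/\ee)=h(n^*)-h(\fn)\le h'(n^*)=\mu\log n^*+\mu-\log\rho=O(\log\log(1/\ee))$, so $\rho^{\fn}\fn^{-\mu\fn}\le \ee\cdot(\log(1/\ee))^{O(1)}$. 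Second, the geometric factor: the asymptotic $\lam(x)=\log x-\log\log x+o(1)$ from \eqref{eq:lambert-asympt} yields $q(\ee)=\mu\log\log(1/\ee)(1+o(1))$, whence $\fn\le (1+o(1))\log(1/\ee)/(\mu\log\log(1/\ee))$, so
\[
(1+\delta)^{\fn}\;\le\;\bigl(\tfrac{1}{\ee}\bigr)^{\log(1+\delta)/(\mu\log\log(1/\ee))\,(1+o(1))}.
\]
Given any $c_1>0$, choose $\delta$ so small that $\log(1+\delta)/\mu<c_1$, and both factors fit inside the $\lessapprox$ slack, proving $A(\fn)\lessapprox\ee$. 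The reverse direction for $A_n\succapprox \rho^n n^{-\mu n}$ is symmetric: bound $A(\fn)$ from below by $c(\delta)(1+\delta)^{-\fn}\rho^{\fn}\fn^{-\mu\fn}\ge c(\delta)(1+\delta)^{-\fn}\ee$, and the same $\delta$-to-$c_1$ trade-off gives $A(\fn)\gtrapprox\ee$. The main bookkeeping obstacle is exactly this trade-off, i.e.\ verifying that the polylogarithmic overshoot and the $(1+\delta)^{\fn}$ factor are both genuinely absorbable once $\delta$ is chosen after $c_1$; this is why the $\lessapprox$ convention was set up with the $\log\log$ denominator in the exponent.
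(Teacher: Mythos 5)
Your proposal is correct and follows essentially the same route as the paper: part (1) is the identical Lambert-$\lam$ inversion of $\rho^n n^{-\mu n}=\ee$, and part (2) controls the same two factors — the rounding overshoot $\rho^{\fn}\fn^{-\mu\fn}/\ee$ and the geometric factor $(1+\delta)^{\fn}$ — choosing $\delta$ after $c_1$ exactly as the paper does (the paper bounds the overshoot via the substitution $t=1-1/b$ rather than your mean-value-theorem estimate, but this is the same polylogarithmic bound).
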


 \begin{proof}
    The exact solution to $\rho^{n}n^{-\mu n}=\ee$ is given by
    \[
      n=\frac{1}{q}\log\frac{1}{\ee},
    \]
    which can be checked by direct substitution.  In more detail:

    \begin{align*}
      \log\frac{1}{\ee} & =\log\left(\frac{n^{\mu}}{\rho}\right)^{n} 
                          = n\left[\log\frac{n^{\mu}}{\rho}\right] 
                          = n\mu\left[\log n-\frac{1}{\mu}\log\rho\right] = 
                          n\mu\log\left\{ n\rho^{-\frac{1}{\mu}}\right\}\\
      \frac{1}{\mu}\rho^{-\frac{1}{\mu}}\log\frac{1}{\ee} &= n\rho^{-\frac{1}{\mu}}\log\left\{ n\rho^{-\frac{1}{\mu}}\right\}.
    \end{align*}
    Applying $\lam$ to both sides and using \eqref{eq:lambert-identity}
    we have
    \[
      \lam\left(\frac{1}{\mu}\rho^{-\frac{1}{\mu}}\log\frac{1}{\ee}\right)=\log\left\{ n\rho^{-\frac{1}{\mu}}\right\} .
    \]
    Now since $\lam\left(x\right)\exp\lam\left(x\right)=x$, we have by
    exponentiation of the preceding formula
    \begin{align*}
      \frac{\frac{1}{\mu}\rho^{-\frac{1}{\mu}}\log\frac{1}{\ee}}{\lam\left(\frac{1}{\mu}\rho^{-\frac{1}{\mu}}\log\frac{1}{\ee}\right)} & =\exp\lam\left(\frac{1}{\mu}\rho^{-\frac{1}{\mu}}\log\frac{1}{\ee}\right)=n\rho^{-\frac{1}{\mu}}\\
      \frac{\log\frac{1}{\ee}}{\mu\lam\left(\frac{1}{\mu}\rho^{-\frac{1}{\mu}}\log\frac{1}{\ee}\right)} & =n.
    \end{align*}

    This proves \eqref{eq:optndef}.

    To show the second part of the proposition, put
    $a=a\left(\ee\right)=\fn\left(\ee\right)$ and
    $b=b\left(\ee\right)={1\over q}\log{1\over\ee}$, so that
    $b-1\leq a \leq b$ and $\rho^b b^{-\mu b}=\ee$. Denote
    $t:=1-{1\over b}$, so $a\geq bt$. We have

    \begin{equation*}
      \rho^a a^{-\mu a} \leq \rho^{bt} \left(bt\right)^{-\mu bt} 
      =\ee^t t^{-\mu bt}
      =\ee \left({1\over\ee}\right)^{{{q\over {\log{1\over\ee}}} + {\mu t \log {1\over t}\over q}}}.
    \end{equation*}

    Furthermore, for any $\delta>0$ we have
    \begin{equation*}
    \left(1+\delta\right)^a\leq
    (1+\delta)^b=\left(1\over\ee\right)^{\log(1+\delta)/q}.
  \end{equation*}
  
    Now, using \eqref{eq:lambert-asympt} and \eqref{eq:q-def}, clearly
    there exists $\ee_0$, depending on $\alpha,\tau,\lambda$ such that
    for $\ee<\ee_0$ we have
    \begin{equation}
      \label{eq:lambert-bounds-eps0}
      {\mu\over 2}\log\log{1\over\ee}\leq q(\ee)\leq 2\mu\log\log{1\over\ee}.
    \end{equation}

    Now let $c_1:=\eta>0$ be given as in
    \prettyref{def:asymptotics-eps}. Choosing
    $\delta=\exp\left({\eta\over 2}\right)-1$ (i.e.
    $\log\left(1+\delta\right)={\eta\over 2}$), we have from
    \prettyref{def:asypmt-not-n} that there exists
    $c>0$ such that
    \begin{align*}
      A(a) &\leq c\cdot (1+\delta)^a \rho^a a^{-\mu a}\\
           &\leq c\cdot  \ee \left({1\over\ee}\right)^{{q\over {\log{1\over\ee}}} + {\mu t \log {1\over t}\over q}+{\eta \over 2q}}
    \end{align*}

    Taking \eqref{eq:lambert-bounds-eps0} into account, it is
    sufficient to show that there exists
    $C_{\eta}$ such that 
    \begin{align*}
      \left({1\over\ee}\right)^{{q\over {\log{1\over\ee}}} + {\mu t \log {1\over t}\over q}+{\eta \over 2q}}
      \leq
      C_{\eta}\left({1\over\ee}\right)^{\eta\over q},\quad \ee<\ee_0.
    \end{align*}
    
    Taking logarithm of both sides, this is equivalent to
    \begin{align*}
      {\log{1\over\ee}\over q} \left({q^2\over{\log{1\over\ee}}}+\mu t \log{1\over t}-{\eta\over{2}}\right)\leq \log C_{\eta}.
    \end{align*}

    Clearly, the expression in the left-hand side attains a maximum
    for some $0<\ee_{\eta}<\ee_0$.  The ``$\lessapprox $" direction
    follows. The ``$\gtrapprox$'' direction is immediate since
    $\rho^a a^{-\mu a} \geq \rho^b b^{-\mu b} = \ee$.
  \end{proof}

 Denote
   \begin{equation}
     \expnn\left(z\right):=U\left(z\right)-F_\alpha= \int_{-1}^{1}\log\left|z-t\right|dv_{\alpha}\left(t\right)+{1\over\alpha}+\log 2,\label{eq:expn2-def}
   \end{equation}
   where $v_{\alpha}$ is the Ullman's distribution defined in
   \eqref{eq:ullmandist}. In case of $\alpha=2$, the function
   $\delta(z)$ is explicitly given by \eqref{eq:hermite_exponent}.
   
   Next, recall the definition of $r_{n}$ in \eqref{eq:rn-def}.
 \begin{prop}
   \label{prop:limits-of-expn} For any  $c\ge 1$ independent of $\ee$ we have
   \begin{numcases}{\lim_{\ee\to 0} \frac{\delta(z)}{q\left(\ee\right)}
       = }
     0 & if $|z|=c$ \label{eq:expn-lim-left}\\
     1 & for any $z=z_{\ee}$ with $|z_{\ee}|=c\cdot r_\fn/a_\fn$\label{eq:expn-lim-right}.
   \end{numcases}
 \end{prop}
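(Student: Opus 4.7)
Plan of proof. The two statements are separated by which regime of $|z|$ we are in, so I handle them one at a time, both relying on the fact from \eqref{eq:lambert-bounds-eps0} that $q(\ee) \approx \log\log(1/\ee)$, hence $q(\ee)\to\infty$ as $\ee\to 0$.

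For \eqref{eq:expn-lim-left}: by \eqref{eq:expn2-def}, $\delta$ is the sum of the logarithmic potential $U(z) = \int_{-1}^{1}\log|z-t|\,dv_\alpha(t)$ and the explicit constant $1/\alpha+\log 2$. Since $v_\alpha$ is a probability measure supported on $[-1,1]$ and $|z|=c\ge 1$, $U$ is continuous at $z$ and takes a finite value that is uniformly bounded as $z$ ranges over $|z|=c$. Thus $\delta(z)=O(1)$ uniformly in $z$, while $q(\ee)\to\infty$, giving \eqref{eq:expn-lim-left} directly.

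For \eqref{eq:expn-lim-right}: the strategy is to show that both $\delta(z_\ee)$ and $q(\ee)$ behave asymptotically like $\mu\log\log(1/\ee)$ up to a factor which tends to $1$. First, for $|z|\to\infty$ I will use that $v_\alpha$ is a probability measure on $[-1,1]$ to write
\[
\delta(z) \;=\; \log|z| + \int_{-1}^{1}\log\bigl|1-t/z\bigr|\,dv_\alpha(t) + \tfrac{1}{\alpha} + \log 2 \;=\; \log|z| + O(1/|z|),
\]
uniformly in argument. Plugging in $|z_\ee|=c\cdot r_\fn/a_\fn$ and using \eqref{eq:mrs-1st} and \eqref{eq:rn-def} gives
\[
|z_\ee| \;=\; c\,\beta_\alpha^{-1}(\tau\lambda)^{-1/\lambda}\,\fn^{\,1/\lambda-1/\alpha} \;=\; c\,\beta_\alpha^{-1}(\tau\lambda)^{-1/\lambda}\,\fn^{\mu},
\]
so $\delta(z_\ee) = \mu\log \fn + O(1)$ with the $O(1)$ depending only on $c,\alpha,\tau,\lambda$.

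Next I expand $q(\ee)$ and $\fn$ via \eqref{eq:lambert-asympt}: applying $\lam(x)=\log x - \log\log x + o(1)$ to $x=\rho^{-1/\mu}\mu^{-1}\log(1/\ee)$ yields
\[
q(\ee) \;=\; \mu\log\log(1/\ee) - \mu\log\log\log(1/\ee) + O(1).
\]
From \eqref{eq:mstar-def}, $\fn = \lfloor q^{-1}\log(1/\ee)\rfloor$, so
\[
\log\fn \;=\; \log\log(1/\ee) - \log q(\ee) + O(1) \;=\; \log\log(1/\ee) - \log\log\log(1/\ee) + O(1).
\]
Combining these two expansions I get $\mu\log\fn = q(\ee) + O(1)$, hence
\[
\frac{\delta(z_\ee)}{q(\ee)} \;=\; \frac{\mu\log\fn + O(1)}{q(\ee)} \;=\; 1 + \frac{O(1)}{q(\ee)} \;\xrightarrow[\ee\to 0]{} 1,
\]
which is \eqref{eq:expn-lim-right}.

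The main bookkeeping difficulty is keeping track of the iterated logarithms in the Lambert asymptotic and making sure the $O(1)$ constants (which depend on $c, \alpha, \tau, \lambda$ but not on $\ee$) are handled uniformly; no real obstacle arises since the dominant terms cancel cleanly. The only mild subtlety is verifying that the bound $\delta(z)=\log|z|+O(1/|z|)$ for large $|z|$ follows from dominated convergence applied to $\log|1-t/z|$ on $t\in[-1,1]$, which is routine once $|z|\ge 2$, say.
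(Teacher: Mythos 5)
Your proof is correct and follows essentially the same route as the paper's: part one is immediate from $q(\ee)\to\infty$ with $\delta$ fixed, and part two decomposes the potential as $\log|z_\ee|$ plus a vanishing integral and then matches $\mu\log\fn$ against $q(\ee)$ via the Lambert asymptotics --- you merely spell out the ``simple computation'' that the paper leaves implicit. (One cosmetic slip: in your display the second equality $\delta(z)=\log|z|+O(1/|z|)$ drops the constant $\tfrac{1}{\alpha}+\log 2$; it should read $\log|z|+O(1)$, which is what you in fact use afterwards.)
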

 \begin{proof}
   Since $\lim_{\ee\to0}q=\infty$ and $\delta$ is independent of $\ee$,
   \eqref{eq:expn-lim-left} follows immediately.

   On the other hand,
   \[
     \frac{r_{n}}{a_{n}}=\left(\frac{n}{\tau\lambda}\right)^{\frac{1}{\lambda}}\beta_{\alpha}^{-1}n^{-\frac{1}{\alpha}}=\frac{e^{\mu}}{2\rho}n^{\mu},
   \]
   and therefore by a simple computation, using
   \eqref{eq:lambert-asympt} and \eqref{eq:q-def} we obtain
   \begin{equation*}
     \lim_{\ee\to 0}\frac{1}{q\left(\ee\right)}\log {cr_\fn\over a_\fn} = 1.
   \end{equation*}
   Therefore \eqref{eq:expn-lim-right} immediately follows:
   $$
   \lim_{\ee\to
     0}\frac{1}{q\left(\ee\right)}\left[\int_{-1}^1\log|z_\ee-t|dv_{\alpha}(t)-F_\alpha\right]=
   \lim_{\ee\to0}\frac{1}{q(\ee)}\left[
   \log\frac{cr_{\fn}}{a_{\fn}}+\underbrace{\int_{-1}^{1}\log\left|1-\frac{ta_{\fn}}{cr_{\fn}}\right|dv_{\alpha}\left(t\right)}_{\to0}-F_\alpha\right]=1.\qedhere
 $$
\end{proof}

 \begin{prop}
   \label{prop:asymptotic-order-exponent-fn}
   Let $A(n)$ and $B(n)$ be arbitrary sequences satisfying
   \begin{align*}
     A\left(n\right) & \precapprox \rho^n n^{-\mu n},\\
     B\left(n\right) &\succapprox \rho^n n^{-\mu n}.
   \end{align*}
   Then for $\ee\ll1$, and
   $1\lessapprox\left|z\right|\lessapprox r_{\fn}/a_\fn$ we have
   \begin{align*}
     A\left(\fn\right)\exp\bigl(\fn U\left(z\right)-\fn F_\alpha\bigr) & \lessapprox\ee^{1-{1\over q}\delta(z)},\\
     B\left(\fn\right)\exp\bigl(\fn U\left(z\right)-\fn F_\alpha\bigr) & \gtrapprox\ee^{1-{1\over q}\delta(z)}.
   \end{align*}
 \end{prop}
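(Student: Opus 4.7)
The plan is to reduce the statement to two earlier results: \prettyref{prop:asymptotic-order-eps} converts the hypotheses on the asymptotic orders of $A(n)$ and $B(n)$ into the pointwise bounds $A(\fn)\lessapprox\ee$ and $B(\fn)\gtrapprox\ee$, while the definition \eqref{eq:expn2-def} immediately gives $\exp(\fn U(z)-\fn F_\alpha)=\exp(\fn\,\delta(z))$. What then remains is to show that $\exp(\fn\,\delta(z))$ equals $\ee^{-\delta(z)/q}$ up to multiplicative factors that are absorbable in the sense of \prettyref{def:asymptotics-eps}.

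To make this precise I would write, directly from the floor in \eqref{eq:mstar-def},
\begin{equation*}
\fn=\tfrac{1}{q}\log\tfrac{1}{\ee}+\theta,\qquad \theta\in[-1,0],
\end{equation*}
which yields the clean identity
\begin{equation*}
\exp\bigl(\fn\,\delta(z)\bigr)=\ee^{-\delta(z)/q}\cdot e^{\theta\,\delta(z)}.
\end{equation*}
Multiplying by $A(\fn)\lessapprox\ee$ (resp.\ $B(\fn)\gtrapprox\ee$) then gives the desired upper (resp.\ lower) estimate, modulo the slack factor $e^{\theta\delta(z)}$, which is trapped between $e^{-|\delta(z)|}$ and $e^{|\delta(z)|}$.

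The principal obstacle is to verify that this slack factor is genuinely absorbable across the whole range $1\lessapprox|z|\lessapprox r_\fn/a_\fn$, since the outer endpoint itself depends on $\ee$ and $\delta(z)$ may grow as $|z|$ does. I would handle this by exploiting the fact that $\delta(z)=U(z)-F_\alpha$ has only logarithmic growth at infinity, so on the admissible range $|\delta(z)|\precapprox\log|z|$; combined with $r_\fn/a_\fn\precapprox\fn^\mu$ and the asymptotics $\fn\approx\log(1/\ee)/\log\log(1/\ee)$ which follow from \prettyref{def:optndef} and \eqref{eq:lambert-asympt}, this forces $|\delta(z)|=O(\log\log(1/\ee))$ uniformly in the range, hence $e^{|\theta\delta(z)|}\le (\log(1/\ee))^{c}$ for a fixed constant $c$. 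Since $\lessapprox$ and $\gtrapprox$ permit multiplicative slack of the form $\exp\!\bigl(c_1\log(1/\ee)/\log\log(1/\ee)\bigr)$ for arbitrary $c_1>0$, which strictly dominates every fixed power of $\log(1/\ee)$ as $\ee\to 0$, the factor $e^{\theta\delta(z)}$ is indeed absorbed.

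Finally, the uniformity on compact subsets in $z$ transfers with no extra work from the uniformity statements in \prettyref{prop:asymptotic-order-eps} together with the continuity of $\delta$, since every intermediate estimate above depends on $z$ only through $\delta(z)$.
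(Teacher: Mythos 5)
Your argument is correct and follows essentially the same route as the paper, which reduces the claim to \prettyref{prop:asymptotic-order-eps} together with the (there unproved) identity $\exp(\fn\{U(z)-F_\alpha\})\approx\ee^{-\delta(z)/q}$. In fact your explicit control of the slack factor $e^{\theta\delta(z)}$ via $|\delta(z)|=O(\log\log(1/\ee))$ on the range $|z|\le c\,r_\fn/a_\fn$ supplies precisely the detail the paper compresses into ``using an argument similar to \prettyref{prop:asymptotic-order-eps}, it is easy to see.''
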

 \begin{proof}
   Plugging \eqref{eq:log-potential-def} and \eqref{eq:mstar-def}, and
   using an argument similar to \prettyref{prop:asymptotic-order-eps}, it
   is easy to see that, uniformly on compact sets in $z\in\CC$,
   \[
     \exp
     \bigl(\fn\left\{ U\left(z\right)-F_{\alpha}\right\}\bigr)
     \approx \ee^{-\frac{1}{q}\left[\int_{-1}^{1}\log\left|z-t\right|dv_{\alpha}\left(t\right)-F_\alpha\right]}.
   \]
   Combining this with \prettyref{prop:asymptotic-order-eps} and
   \eqref{eq:expn2-def} provides the conclusion.
 \end{proof}

 We are now in a position to complete the proof of
 \prettyref{thm:main-thm-exp-type}.

 \begin{proof}[Proof of \prettyref{thm:main-thm-exp-type}]
   We put $\fn=\fn\left(\ee,\alpha,\tau,\lambda\right)$ and
   \begin{equation}
     \label{eq:choice-of-constants}
     c_1:=C\left(\alpha\right)\qquad \textrm{ from \prettyref{theo:ls_square_theo}}
   \end{equation}

   Furthermore, by definition of $\Delta_n$ in \eqref{eq:Deltadef} for
   large enough $n$ we shall have
   $\Delta_n\left(2,\alpha,1/8\right)\subseteq\left[-{4\over
       3}a_n,{4\over 3}a_n\right]$. Therefore in particular for small
   enough $\ee$ \eqref{eq:sampling_extent_condition} and
   \eqref{eq:density_condition_main} will imply the sampling extent
   conditions of \prettyref{theo:maintheo}. Applying this theorem and
   rescaling $z\to a_\fn z$ we obtain, uniformly in compact subsets in $z$,
   \begin{numcases}{|f(a_\fn z)-S_\fn (g)( a_\fn z)| \precapprox}
     \ee\exp(\fn U(z)-\fn F_\alpha) & for $|z|\le r_\fn/a_\fn$,\label{eq:ls_ext_mid_eps}\\
     \exp\left(\tau|a_\fn z|^\lambda\right) & for $|z|>r_\fn/a_\fn$.
     \label{eq:ls_ext_large_eps}
   \end{numcases}

   Combining \eqref{eq:ls_ext_mid_eps}, \eqref{eq:ls_ext_large_eps},
   \eqref{eq:frostman} and \prettyref{prop:asymptotic-order-eps} we
   obtain \eqref{eq:main-extr-estimate-eps} \footnote{Indeed, if
     $|z|\le a_\fn,\;z\in\RR$ then
     $U\left(z/a_\fn\right)=|{\beta_\alpha z \over \beta_\alpha
       \fn^{1/\alpha}}|^\alpha+F_\alpha$ and so
     $\ee \exp\left(\fn U\left(z/a_{\fn} \right)-\fn
       F_\alpha\right)=\ee\exp\left(|z|^\alpha\right)$.}, clearly
   uniformly on compact subsets in $z$. The rest of the assertions
   follow from \prettyref{prop:asymptotic-order-exponent-fn} and
   \prettyref{prop:limits-of-expn}.
 \end{proof}

\subsection{Optimality}
We use the same notations as in
\prettyref{subsec:asymptotics_eps}.

\begin{proof}[Proof of \prettyref{thm:lower-bound-eps}]

  In view of \cite[Lemma~3.3]{canadapap}\footnote{ To reconcile the
    notation from \cite{canadapap}, note that the constant $F$ there
    is in fact equal to $-F_\alpha-\log \beta_\alpha$.}, we have for
  every polynomial $P\in\Pi_n$,
  \begin{equation}\label{pf5eqn1}
    \tn P\tn_{\tau,\lambda}\precapprox n^{\mu n}\rho^{-n}\|w_\alpha P\|_\infty.
  \end{equation}
  Therefore, there exists a sequence $M_n$ such that
  $\lim_{n\to\infty} M_n^{1/n}=1$, and the polynomials defined by
  \begin{equation}\label{pf5eqn2}
    P_n^*\left(z\right):=M_n^{-1}\rho^n n^{-\mu n}\frac{\mathcal{T}_{n}(z)}{\|\mathcal{T}_{n}w_{\alpha}\|_{\infty}}
  \end{equation}
  are all in $B_{\tau,\lambda}$ (just substitute
  $\|w_\alpha P_n^*\|_\infty=M_n^{-1}\rho^n n^{-\mu n}$ into
  \eqref{pf5eqn1}).  We also observe that
  $$
  \|w_\alpha P_\fn^*\|_\infty\le M_\fn^{-1} \ee \lessapprox \ee.
  $$

  Now, let $R$ (respectively, $I$) be any recovery (respectively, information) operator and \eref{eq:info_fn_cond} be satisfied. Then
  \begin{equation}\label{pf5eqn3}
    \|I(P_\fn^*)\|_Y\le M_\fn^{-1} \ee.
  \end{equation}
  Define $\xi\left(\ee\right):=M_\fn ^{-1}$. We have (cf. \eref{eq:worst_case_error})
  \begin{equation}\label{pf5eqn4}
    E(\tau,\lambda, \xi\left(\ee\right), z; R, I)\ge \bigl|P_\fn^*(a_\fn z)-R\left(I(P_\fn^*)-I(P_\fn^*)\right)(a_\fn z)\bigr| =\bigl|P_\fn^*(a_\fn z)-R(0)(a_\fn z)\bigr|,
  \end{equation}
  where $0$ denotes the zero element of $Y$. The same inequality holds also when $P_\fn^*$ is replaced by $-P_\fn^*$. Thus,
  \begin{equation}\label{pf5eqn5}
    |P_\fn^*(a_\fn z)| =\frac{1}{2}\left|\left(P_\fn^*(a_\fn z)-R(0)(a_\fn z)\right)-\left(-P_\fn^*(a_\fn z)-R(0)(a_\fn z)\right)\right|\le E(\tau,\lambda, \xi(\ee), z; R, I).
  \end{equation}
  In view of \prettyref{prop:chebyshev_prop} we have
  \begin{equation}
    \label{eq:rescaled-lower-bound-on-compact-sets}
    |P_\fn^*(a_\fn z)| \gtrapprox \ee\exp(\fn U(z)-\fn F_\alpha),
  \end{equation}
  the relation holding uniformly on compact sets for $z\in\CC\setminus\left[-1,1\right]$. Applying
  \prettyref{prop:asymptotic-order-exponent-fn} leads to
  \eqref{eq:optimal-lower-bound} and concludes the proof with the 
  ``dark object'' $f_{\ee}:=P_\fn^*$.
\end{proof}

 \bibliographystyle{abbrv}
 \bibliography{entire-extrapolation}

\begin{thebibliography}{10}

\bibitem{akinshin_accuracy_2015}
A.~Akinshin, D.~Batenkov, and Y.~Yomdin.
\newblock Accuracy of spike-train {Fourier} reconstruction for colliding nodes.
\newblock In {\em 2015 {International} {Conference} on {Sampling} {Theory} and
  {Applications} ({SampTA})}, pages 617--621, May 2015.

\bibitem{batenkov2016stability}
D.~Batenkov.
\newblock Stability and super-resolution of generalized spike recovery.
\newblock {\em Applied and Computational Harmonic Analysis}, 2016.

\bibitem{batenkov_accuracy_2013}
D.~Batenkov and Y.~Yomdin.
\newblock On the accuracy of solving confluent {Prony} systems.
\newblock {\em SIAM J. Appl. Math.}, 73(1):134--154, 2013.

\bibitem{batenkov_geometry_2014}
D.~Batenkov and Y.~Yomdin.
\newblock Geometry and {Singularities} of the {Prony} mapping.
\newblock {\em Journal of Singularities}, 10:1--25, 2014.

\bibitem{bertero1996iiisuperresolution}
M.~Bertero and C.~de~Mol.
\newblock {III} {Super}-{Resolution} by {Data} {Inversion}.
\newblock {\em Progress in Optics}, 36:129--178, Jan. 1996.

\bibitem{bertero1979onthe}
M.~Bertero, C.~De~Mol, and G.~A. Viano.
\newblock On the problems of object restoration and image extrapolation in
  optics.
\newblock {\em Journal of Mathematical Physics}, 20(3):509--521, Mar. 1979.

\bibitem{bertero1980resolution}
M.~Bertero, G.~A. Viano, and C.~d. Mol.
\newblock Resolution {Beyond} the {Diffraction} {Limit} for {Regularized}
  {Object} {Restoration}.
\newblock {\em Optica Acta: International Journal of Optics}, 27(3):307--320,
  Mar. 1980.

\bibitem{candes2013super}
E.~J. Cand{\`e}s and C.~Fernandez-Granda.
\newblock Super-resolution from noisy data.
\newblock {\em Journal of Fourier Analysis and Applications}, 19(6):1229--1254,
  2013.

\bibitem{cand`es2014towards}
E.~J. Cand{\`e}s and C.~Fernandez-Granda.
\newblock Towards a {Mathematical} {Theory} of {Super}-resolution.
\newblock {\em Communications on Pure and Applied Mathematics}, 67(6):906--956,
  June 2014.

\bibitem{cannon_problems_1965}
J.~R. Cannon and K.~Miller.
\newblock Some {Problems} in {Numerical} {Analytic} {Continuation}.
\newblock {\em Journal of the Society for Industrial and Applied Mathematics
  Series B Numerical Analysis}, 2(1):87--98, Jan. 1965.

\bibitem{demanet2014therecoverability}
L.~Demanet and N.~Nguyen.
\newblock The recoverability limit for superresolution via sparsity.
\newblock 2014.

\bibitem{demanet2016stableextrapolation}
L.~Demanet and A.~Townsend.
\newblock Stable extrapolation of analytic functions.
\newblock {\em arXiv:1605.09601 [cs, math]}, May 2016.
\newblock arXiv: 1605.09601.

\bibitem{donoho1992superresolution}
D.~Donoho.
\newblock Superresolution via sparsity constraints.
\newblock {\em SIAM Journal on Mathematical Analysis}, 23(5):1309--1331, 1992.

\bibitem{singdet}
F.~Filbir, H.~N. Mhaskar, and J.~Prestin.
\newblock On the problem of parameter estimation in exponential sums.
\newblock {\em Constructive Approximation}, 35(3):323--343, 2012.

\bibitem{gil2007numerical}
A.~Gil, J.~Segura, and N.~Temme.
\newblock {\em Numerical {Methods} for {Special} {Functions}}.
\newblock Other {Titles} in {Applied} {Mathematics}. Society for Industrial and
  Applied Mathematics, Jan. 2007.

\bibitem{henrici_algorithm_1966}
P.~Henrici.
\newblock An {Algorithm} for {Analytic} {Continuation}.
\newblock {\em SIAM Journal on Numerical Analysis}, 3(1):67--78, Mar. 1966.

\bibitem{hoorfar_inequalities_2008}
A.~Hoorfar and M.~Hassani.
\newblock Inequalities on the {{Lambert W}} function and hyperpower function.
\newblock {\em J. Inequal. Pure and Appl. Math}, 9(2):5--9, 2008.

\bibitem{izu_time-frequency_2009}
S.~Izu and J.~D. Lakey.
\newblock Time-{Frequency} {Localization} and {Sampling} of {Multiband}
  {Signals}.
\newblock {\em Acta Applicandae Mathematicae}, 107(1-3):399--435, July 2009.

\bibitem{landau1986extrapolating}
H.~Landau.
\newblock Extrapolating a band-limited function from its samples taken in a
  finite interval.
\newblock {\em IEEE Transactions on Information Theory}, 32(4):464--470, July
  1986.

\bibitem{levin_lectures_1996}
B.~Y. Levin.
\newblock {\em Lectures on Entire Functions}, volume 150.
\newblock {American Mathematical Soc.}, 1996.

\bibitem{levinlubinsky}
E.~Levin and D.~S. Lubinsky.
\newblock {\em Orthogonal polynomials for exponential weights}.
\newblock Springer Science \& Business Media, 2012.

\bibitem{li_stable_2017}
W.~Li and W.~Liao.
\newblock Stable super-resolution limit and smallest singular value of
  restricted {Fourier} matrices.
\newblock {\em arXiv:1709.03146 [cs, math]}, Sept. 2017.
\newblock arXiv: 1709.03146.

\bibitem{lindberg2012mathematical}
J.~Lindberg.
\newblock Mathematical concepts of optical superresolution.
\newblock {\em Journal of Optics}, 14(8):083001, 2012.

\bibitem{lubinsky1988strongasymptotics}
D.~S. Lubinsky and E.~B. Saff.
\newblock {\em Strong asymptotics for extremal polynomials associated with
  weights on {IR}}.
\newblock Number 1305 in Lecture notes in mathematics. Springer, Berlin
  Heidelberg New York London Paris Tokyo, 1988.
\newblock OCLC: 605263610.

\bibitem{trigwave}
H.~Mhaskar and J.~Prestin.
\newblock On the detection of singularities of a periodic function.
\newblock {\em Advances in Computational Mathematics}, 12(2-3):95--131, 2000.

\bibitem{canadapap}
H.~N. Mhaskar.
\newblock Weighted polynomial approximation of entire functions on unbounded
  subsets of the complex plane.
\newblock {\em Canad. Math. Bull}, 36(3):303--313, 1993.

\bibitem{mhaskar1997introduction}
H.~N. Mhaskar.
\newblock {\em Introduction to the {Theory} of {Weighted} {Polynomial}
  {Approximation}}.
\newblock World Scientific, Jan. 1997.
\newblock Google-Books-ID: o5rsCgAAQBAJ.

\bibitem{mhaskar2002onthe}
H.~N. Mhaskar.
\newblock On the {Representation} of {Band} {Limited} {Functions} {Using}
  {Finitely} {Many} {Bits}.
\newblock {\em Journal of Complexity}, 18(2):449--478, June 2002.

\bibitem{loctrigwave}
H.~N. Mhaskar and J.~Prestin.
\newblock On local smoothness classes of periodic functions.
\newblock {\em Journal of Fourier Analysis and Applications}, 11(3):353--373,
  2005.

\bibitem{mhaskar1984extremal}
H.~N. Mhaskar and E.~B. Saff.
\newblock Extremal problems for polynomials with exponential weights.
\newblock {\em Transactions of the American Mathematical Society},
  285(1):203--234, 1984.

\bibitem{micchelli1977survey}
C.~A. Micchelli and T.~J. Rivlin.
\newblock A {Survey} of {Optimal} {Recovery}.
\newblock In {\em Optimal {Estimation} in {Approximation} {Theory}}, The {IBM}
  {Research} {Symposia} {Series}, pages 1--54. Springer, Boston, MA, 1977.
\newblock DOI: 10.1007/978-1-4684-2388-4\_1.

\bibitem{micchelli1985lectures}
C.~A. Micchelli and T.~J. Rivlin.
\newblock Lectures on optimal recovery.
\newblock In {\em Numerical {Analysis} {Lancaster} 1984}, pages 21--93.
  Springer, 1985.

\bibitem{miller_three_1964}
K.~Miller.
\newblock Three circle theorems in partial differential equations and
  applications to improperly posed problems.
\newblock {\em Archive for Rational Mechanics and Analysis}, 16(2):126--154,
  Jan. 1964.

\bibitem{miller_least_1970}
K.~Miller.
\newblock Least {Squares} {Methods} for {Ill}-{Posed} {Problems} with a
  {Prescribed} {Bound}.
\newblock {\em SIAM Journal on Mathematical Analysis}, 1(1):52--74, Feb. 1970.

\bibitem{miller1973onthe}
K.~Miller and G.~A. Viano.
\newblock On the necessity of nearly-best-possible methods for analytic
  continuation of scattering data.
\newblock {\em Journal of Mathematical Physics}, 14(8):1037--1048, Aug. 1973.

\bibitem{mishali_theory_2010}
M.~Mishali and Y.~C. Eldar.
\newblock From {Theory} to {Practice}: {Sub}-{Nyquist} {Sampling} of {Sparse}
  {Wideband} {Analog} {Signals}.
\newblock {\em IEEE Journal of Selected Topics in Signal Processing},
  4(2):375--391, Apr. 2010.

\bibitem{paley1934fourier}
R.~E. A.~C. Paley and N.~Wiener.
\newblock {\em Fourier transforms in the complex domain}, volume~19.
\newblock American Mathematical Society New York, 1934.

\bibitem{prony_essai_1795}
R.~Prony.
\newblock Essai experimental et analytique.
\newblock {\em J. Ec. Polytech.(Paris)}, 2:24--76, 1795.

\bibitem{stallinga_accuracy_2010}
S.~Stallinga and B.~Rieger.
\newblock Accuracy of the {Gaussian} {Point} {Spread} {Function} model in 2d
  localization microscopy.
\newblock {\em Optics Express}, 18(24):24461--24476, Nov. 2010.

\bibitem{stoica2005spectral}
P.~Stoica and R.~Moses.
\newblock {\em Spectral analysis of signals}.
\newblock Pearson/Prentice Hall, 2005.

\bibitem{strichartz_guide_2003}
R.~S. Strichartz.
\newblock {\em A {{Guide}} to {{Distribution Theory}} and {{Fourier
  Transforms}}}.
\newblock {World Scientific}, 2003.

\bibitem{trefethen_quantifying_}
L.~N. Trefethen.
\newblock {{Quantifying the Ill}}-{{Conditioning of Analytic Continuation}}.
\newblock {\em preprint}.

\bibitem{venkataramani_optimal_2001}
R.~Venkataramani and Y.~Bresler.
\newblock Optimal sub-{Nyquist} nonuniform sampling and reconstruction for
  multiband signals.
\newblock {\em IEEE Transactions on Signal Processing}, 49(10):2301--2313, Oct.
  2001.

\bibitem{walsh1935interpolation}
J.~L. Walsh.
\newblock {\em Interpolation and approximation by rational functions in the
  complex domain}, volume~20.
\newblock American Mathematical Soc., 1935.

\bibitem{zhu_approximating_2017}
Z.~Zhu and M.~B. Wakin.
\newblock Approximating sampled sinusoids and multiband signals using multiband
  modulated {DPSS} dictionaries.
\newblock {\em Journal of Fourier Analysis and Applications}, 23(6):1263--1310,
  2017.

\end{thebibliography}

 \end{document}